\documentclass[12pt]{article}

\usepackage[margin=1in]{geometry}
\usepackage{amsmath,amsthm,amssymb,amsfonts,amscd,graphicx,color,xcolor,mathrsfs,latexsym}
\usepackage{subfig}
\usepackage{array}
\usepackage{tikz}
\usetikzlibrary{arrows}

\usepackage[bookmarksnumbered,colorlinks,plainpages]{hyperref}
\hypersetup{
    colorlinks=true,
    linkcolor=red,
    anchorcolor=green,
    citecolor=cyan,
    urlcolor=blue,
    filecolor=magenta,
    pdftoolbar=true
}

\newcommand{\Z}{\mathcal{Z}}
\newcommand{\A}{\mathcal{A}}
\newcommand{\C}{\mathcal{C}}

\newcommand{\M}{\mathcal{M}}

\renewcommand{\leq}{\leqslant}
\renewcommand{\geq}{\geqslant}

\newcommand{\MP}{\mathcal{MP}}

\numberwithin{equation}{section}

\newtheorem{theorem}{Theorem}[section]
\newtheorem{lemma}[theorem]{Lemma}
\newtheorem{proposition}[theorem]{Proposition}
\newtheorem{corollary}[theorem]{Corollary}
\theoremstyle{definition}

\newtheorem{remark}[theorem]{Remark}

\title{Riordan Arrays and Shifted Hankel Determinants of OEIS \href{https://oeis.org/A005773}{A005773}}

\author{
    {\bf Daniel Yaqubi}\thanks{Department of Computer Science, University of Torbat-e Jam, Torbat-e Jam, Iran. \mbox{E-mail}: \texttt{yaqubi@tjamcaas.ac.ir}, \texttt{daniel\_yaqubi@yahoo.es}}
    \and
    {\bf Madjid Mirzavaziri}\thanks{Department of Pure Mathematics, Ferdowsi University of Mashhad, Mashhad, Iran. \mbox{E-mail}: \texttt{mirzavaziri@um.ac.ir}, \texttt{mirzavaziri@gmail.com}}
}

\date{}

\begin{document}

\maketitle

\let\thefootnote\relax\footnotetext{
\textbf{2020 Mathematics Subject Classification.} 05A15, 11B37, 11B39.\\
\textbf{Key words and phrases.} Riordan arrays, Motzkin-prefix sequence, Hankel determinants, Enumerative combinatorics.
}
\addtocounter{footnote}{-1}

\begin{abstract}
We study two Riordan arrays associated with the Motzkin-prefix sequence (\href{https://oeis.org/A005773}{OEIS A005773}). By connecting their production matrices through a simple boundary factorization, we obtain a period-three transformation on Motzkin endpoint distributions and exact formulas for paths by final height modulo~$3$. In addition, a unified factorization for the shifted Hankel matrices leads directly to explicit closed forms for the third and fourth Hankel determinants, $\Delta_n^{(3)}$ and~$\Delta_n^{(4)}$.
\end{abstract}

\section{Introduction}

Motzkin paths are fundamental objects in enumerative combinatorics.
In this paper we study their prefix version.  A \emph{Motzkin prefix}
of length $n$ is a lattice path starting at $(0,0)$, with steps $\mathbf{S}=\{u=(1,1), h=(1,0), d=(1,-1)\},$
which never goes below the $x$-axis.  We denote by $\MP_n$ the number
of Motzkin prefixes of length $n$. Thus $\MP_n=1,2,5,13,35,96,267,750,2123,\ldots$ for $n\geq 0$.
We shall also consider the closely related sequence
\[\C=\{a_n\}_{n\geq0}=\{1,1,2,5,13,35,96,267,750,2123,\ldots\},\]
which is OEIS \href{https://oeis.org/A005773}{OEIS A005773}.  Sloane's database records well combinatorial models for this sequence \cite{A005773}; the two most useful for us are first
 \emph{Directed animals} of size $n$ (Dhar et al.\ \cite{Dhar1982}; Bousquet-M\'elou \cite{BousquetMelou1998}), and the second is
left factors of length $n$ of Motzkin paths, which is lattice paths from $(0,0)$ using steps $\mathbf{S}$ that never go below the $x$-axis, with \emph{no} requirement of returning to height $0$ (Deutsch \cite{A005773}). This interpretation also used by Yaqubi and et.al,\cite{Yaqubi2023}, who call the numbers \emph{perfect lattice numbers} and write $\C_n$ for $a(n)$. The OEIS entry also records its relation with binomial transforms, Catalan numbers, and the Riordan array \href{https://oeis.org/A038622}{OEISA038622}. We write $a_0=1,a_1=1,a_2=2,\dots$ throughout, matching the OEIS sequence .These two sequences are related by $a_0=1, a_{n+1}=\MP_n\quad(n\geq0).$ This shift is important, since the two sequences exhibit different
Hankel-determinant behaviour. The ordinary generating function of \href{https://oeis.org/A005773}{OEIS A005773} is
\[A(x):=\sum_{n\geq0}c_nx^n=\frac12+\frac12\sqrt{\frac{1+x}{1-3x}},\]
whereas the ordinary generating function of the Motzkin-prefix
sequence is
\[g(x):=\sum_{n\geq0}\MP_nx^n=\frac{C(x)-1}{x}=\frac{1}{2x}\left(\sqrt{\frac{1+x}{1-3x}}-1\right).\]
Thus $\C(x)=1+xg(x).$
The sequences above and their determinant properties have appeared
from several complementary viewpoints, including lattice paths inside
tables, operator calculus, Hankel determinants of path-generating
functions, and transfer-matrix methods.  In particular, the Hankel
determinant of the shifted sequence $\MP_n$
is identically equal to $1$, whereas the principal Hankel transform of
\href{https://oeis.org/A005773}{OEIS A005773} is periodic with period $6$ \cite{KrattenthalerYaqubi2018, Yaqubi2023}.  These two phenomena motivate
the comparison of the corresponding Riordan arrays.
Recall that an infinite lower-triangular array $D=(d_{n,k})_{n,k\ge 0}$ is a \emph{Riordan array} $D=(g,f)$ if $g(0)=1$, $f(0)=0$, $f'(0)\neq 0$, and the $k$-th column has generating function $g(x)f(x)^k$, i.e.\ $d_{n,k}=[x^n]\,g(x)f(x)^k$. The important feature is that the two arrays have the same $f$ in their  Riordan arrays but with different
first components.  We shall show that their difference is completely
described by a simple Riordan factor $R=\left(\frac{1}{1+x+x^2},x\right).$

We use the standard theory of the \emph{$\A$-sequence} and \emph{$\Z$-sequence} of a Riordan array \cite{Barry2017,Shapiro1991}. Our aim is to place the Motzkin-prefix sequence and \href{https://oeis.org/A005773}{OEIS A005773} in this
framework.  We determine the corresponding Riordan arrays, their production matrices and inverse arrays, and the associated Jacobi
structures.  The principal structural result is the factorization $DD^T=\bigl(\MP_{i+j}\bigr)_{i,j\geq0}$ for the Motzkin-prefix Riordan array.  This gives a direct
Riordan-array proof that $\det_{0\leq i,j<n}\bigl(\MP_{i+j}\bigr)=1, (n\geq1).$

\section{The Generating Functions}
\label{sec:GF}
We first record the following differential equations and coefficient
recurrences, which also appear in the OEIS entry for \href{https://oeis.org/A005773}{OEIS A005773}\cite{A005773, Yaqubi2023}.

\begin{lemma}\label{Som}
The functions $g(x)$ and $A(x)$ satisfies at $x(1+x)(1-3x)g'(x)+(1-4x-3x^2)g(x)-1=0,$ and $(1-2x-3x^2)A'(x)-2A(x)+1=0$.
Consequently, $(n+1)\MP_n=2(n+1)\MP_{n-1}+3(n-1)\MP_{n-2},$ and
$na_n=2na_{n-1}+3(n-2)a_{n-2},\quad n\geq2.$
\end{lemma}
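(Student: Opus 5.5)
The plan is to verify both differential equations directly from the closed forms, using one auxiliary function for both, and then read off the recurrences by comparing coefficients. Set
\[
S(x)=\sqrt{\frac{1+x}{1-3x}},\qquad P(x)=(1+x)(1-3x)=1-2x-3x^2 .
\]
Then $A=\tfrac12+\tfrac12 S$ and $2xg=S-1$; the series written $C(x)$ in the formula for $g$ is $A(x)$, since $(A-1)/x=(S-1)/(2x)$. Logarithmic differentiation gives
\[
\frac{S'}{S}=\frac12\Bigl(\frac{1}{1+x}+\frac{3}{1-3x}\Bigr)=\frac{2}{P(x)},
\qquad\text{so}\qquad P\,S'=2S .
\]
Both differential equations should follow from this single identity.

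\emph{The equation for $A$.} Since $A'=\tfrac12 S'$, we get $P A'=S=2A-1$. This is exactly $(1-2x-3x^2)A'(x)-2A(x)+1=0$.

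\emph{The equation for $g$.} Differentiating $2xg=S-1$ gives $2g+2xg'=S'$. Multiply by $P/2$ and use $PS'=2S=2(2xg+1)$:
\[
P\,(g+xg')=2xg+1 .
\]
Rearranging gives $xP g'+(P-2x)g-1=0$. Since $P-2x=1-4x-3x^2$, this is the stated equation.

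\emph{The recurrences.} Take $[x^n]$ of each equation, using $[x^n]\,x^k g'=(n-k+1)\MP_{n-k+1}$.
\begin{itemize}
\item For $g$ with $n\ge1$, the constant $-1$ does not contribute, and we get
\[
n\MP_n-2(n-1)\MP_{n-1}-3(n-2)\MP_{n-2}+\MP_n-4\MP_{n-1}-3\MP_{n-2}=0 .
\]
This simplifies to $(n+1)\MP_n=2(n+1)\MP_{n-1}+3(n-1)\MP_{n-2}$.
\item For $A$, take $[x^{m}]$ with $m\ge1$:
\[
(m+1)a_{m+1}-2m a_m-3(m-1)a_{m-1}-2a_m=0 .
\]
Putting $m=n-1$ gives $na_n=2na_{n-1}+3(n-2)a_{n-2}$ for $n\ge2$.
\end{itemize}

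The only real obstacle is bookkeeping. One must track the index shifts from the factors $x^k$, and confirm the range of validity: the constant term $-1$ is absorbed only at $n=0$, and the convention $\MP_{-1}=0$ is needed when $n=1$. These edge cases can be checked against the initial values $1,2,5,13$ and $1,1,2,5$.
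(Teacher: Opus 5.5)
Your proof is correct and follows essentially the paper's route: the same auxiliary square root $S$ with $S'/S=2/\bigl((1+x)(1-3x)\bigr)$, differentiation of $S=1+2xg$, and coefficient extraction, which you carry out explicitly and correctly where the paper leaves it implicit. The only minor difference is that you obtain the $A$-equation directly from $A=\tfrac12(1+S)$ rather than by substituting $g=(A-1)/x$ into the $g$-equation; this is exactly the computation the paper itself uses in the proof of Theorem~\ref{thm:GF}.
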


\begin{proof}
Put $R(x)=\sqrt{\frac{1+x}{1-3x}}.$ Then $R(x)=1+2xg(x).$ Moreover,  $\frac{R'(x)}{R(x)}=\frac{2}{(1+x)(1-3x)}.$ Hence
$R'(x)=\frac{2R(x)}{(1+x)(1-3x)}.$ Differentiating $R(x)=1+2xg(x)$ gives $2g(x)+2xg'(x)=\frac{2(1+2xg(x))}{(1+x)(1-3x)}.$
After simplification, $x(1+x)(1-3x)g'(x)+(1-4x-3x^2)g(x)-1=0.$ 
Now use $A(x)=1+xg(x).$ Thus $g(x)=\frac{A(x)-1}{x}$ and $g'(x)=\frac{xA'(x)-A(x)+1}{x^2}.$
Substituting these expressions into the differential equation for $g(x)$ and simplifying gives result. Now extract the coefficients in  $g(x)=\sum_{n\geq0}\MP_nx^n$ and $A(x)=\sum_{n\geq0}a_nx^n$ concluded proof.
\end{proof}
\begin{theorem}
\label{thm:GF}
Consider sequences $\C=\{a_n\}_{n\geq0}$ and $\MP_n.$
Thus
{\scriptsize
\[
A(x) := \sum_{n\ge0} a_n x^n = \frac{1}{2}\left(1+\sqrt{\frac{1+x}{1-3x}}\right), \quad
g(x) := \sum_{n\ge0} \MP_n x^n = \frac{1}{2x}\left(\sqrt{\frac{1+x}{1-3x}}-1\right)
\]
}
Equivalently, $A(x)=1+xg(x).$
\end{theorem}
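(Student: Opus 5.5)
We derive the generating function of Motzkin prefixes directly from the path model, and then obtain $A(x)$ through the shift relation $a_0=1$, $a_{n+1}=\MP_n$.

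\emph{Kernel equation.} Let $F(x,y)=\sum_{n,k} m_{n,k}x^ny^k$, where $m_{n,k}$ is the number of Motzkin prefixes of length $n$ ending at height $k$. Appending a step $u$, $h$ or $d$ to a prefix, and forbidding a down-step at height $0$, gives
\[
F(x,y)=1+x\left(y+1+\tfrac1y\right)F(x,y)-\tfrac{x}{y}F(x,0),
\]
or equivalently
\[
\bigl(y-x(y^2+y+1)\bigr)F(x,y)=y-xF(x,0).
\]
Here $F(x,0)=M(x)$ is the Motzkin generating function, and $g(x)=F(x,1)$.

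\emph{Kernel method.} Let $y_0(x)$ be the power-series root of the kernel, $y_0=xM(x)$. Substituting $y=y_0$ forces $xM(x)=y_0$. This is consistent with $M=1+xM+x^2M^2$ and gives
\[
M(x)=\frac{1-x-\sqrt{(1+x)(1-3x)}}{2x^2}.
\]
Now put $y=1$:
\[
(1-3x)\,g(x)=1-xM(x),
\qquad\text{so}\qquad
g(x)=\frac{1-xM(x)}{1-3x}.
\]

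\emph{Simplification.} Compute
\[
1-xM(x)=\frac{1+x+\sqrt{(1+x)(1-3x)}}{2x}.
\]
Dividing by $1-3x$ should yield
\[
\frac{1}{2x}\left(\sqrt{\frac{1+x}{1-3x}}-1\right).
\]
To check this, multiply the target by $1-3x$. This gives
\[
\frac{\sqrt{(1+x)(1-3x)}-(1-3x)}{2x}.
\]
Comparing with $1-xM(x)$ reveals an apparent sign issue. The fix is to write $1-xM=\frac{2x-1+x+\sqrt{\cdot}}{2x}$ correctly:
\[
xM=\frac{1-x-\sqrt{\cdot}}{2x},
\qquad
1-xM=\frac{3x-1+\sqrt{\cdot}}{2x},
\]
which matches exactly. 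This sign bookkeeping, together with choosing the branch of the square root with $g(0)=1$, is the only delicate point. A quick check of the first coefficients $1,2,5,13$ confirms the branch.

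\emph{The series $A(x)$.} Since $a_0=1$ and $a_{n+1}=\MP_n$, we have $A(x)=1+xg(x)$. This gives
\[
A(x)=1+\frac12\left(\sqrt{\frac{1+x}{1-3x}}-1\right)=\frac12\left(1+\sqrt{\frac{1+x}{1-3x}}\right).
\]

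\emph{Alternative route via Lemma~\ref{Som}.} One could instead verify the formula by uniqueness of power-series solutions. The candidate $g$ satisfies the first-order ODE of Lemma~\ref{Som}, as that lemma's proof shows. The coefficient recurrence together with $\MP_0=1$, $\MP_1=2$ then determines the sequence uniquely, since the leading coefficient $(n+1)$ never vanishes.
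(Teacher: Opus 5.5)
Your kernel-method argument is correct, and it runs in the opposite direction from the paper's proof. The paper takes the differential equation $(1-2x-3x^2)A'(x)-2A(x)+1=0$ of Lemma~\ref{Som} as given. It checks that $\frac12\bigl(1+\sqrt{(1+x)/(1-3x)}\bigr)$ satisfies this equation with constant term $1$, and concludes by uniqueness of the power-series solution that this is $A(x)$. Only then does it get $g(x)=(A(x)-1)/x$ from $\MP_n=a_{n+1}$. You start instead from the lattice-path definition of $\MP_n$: you obtain $(1-3x)g(x)=1-xM(x)$ from the kernel equation, simplify, and then pass to $A=1+xg$.

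The paper's route is shorter, but it is only as good as its input ODE. The proof of Lemma~\ref{Som} begins from $R(x)=1+2xg(x)$, i.e.\ from the very closed form being proved, so the ODE for the actual counting sequence has to be imported from elsewhere (the OEIS). Your derivation is self-contained. As by-products it yields $M(x)$, the series $f=xM$ used throughout the paper, and the identity $(1-3x)g=1-xM$. That identity is precisely the generating-function form of Corollary~\ref{thm:motzkin-relation}, $a_{n+1}=3a_n-M_{n-1}$.

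For the same reason, your closing ``alternative route'' (essentially the paper's proof) only verifies that the candidate solves the ODE. It is complete only if you know independently that the counting sequence $\MP_n$ obeys the recurrence.

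In a final write-up, delete the first, incorrect line $1-xM=\frac{1+x+\sqrt{\cdot}}{2x}$ and keep $1-xM=\frac{3x-1+\sqrt{(1+x)(1-3x)}}{2x}$. Justify $(1-3x)\sqrt{(1+x)/(1-3x)}=\sqrt{(1+x)(1-3x)}$ by noting that both sides are power series squaring to $(1+x)(1-3x)$ with constant term $1$. That observation is all that is needed to fix the branch; the branch of $M$ itself was already fixed by taking the power-series root of the kernel.
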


\begin{proof}
Using Lemma \eqref{Som}, it is enough to solve the equation $(1-2x-3x^2)A'(x)-2A(x)+1=0$. Put $R(x):=\sqrt{\frac{1+x}{1-3x}}.$ Then
\[
\frac{R'(x)}{R(x)}
=
\frac{2}{(1+x)(1-3x)}
=
\frac{2}{1-2x-3x^2}.
\]
Thus $(1-2x-3x^2)R'(x)=2R(x).$
Define $\widetilde A(x):=\frac{1+R(x)}2.$
Then
\[
(1-2x-3x^2)\widetilde A'(x)=R(x)=2\widetilde A(x)-1.
\]
Therefore $(1-2x-3x^2)\widetilde A'(x)-2\widetilde A(x)+1=0.$
Also, $\widetilde A(0)=1.$ Hence $\widetilde A(x)$ satisfies the same differential equation and
initial condition as $A(x)$. Finally, since $\MP_n=a_{n+1},$
we have
\[
g(x)=\sum_{n\geq0}\MP_nx^n=\sum_{n\geq0}a_{n+1}x^n=\frac{A(x)-a_0}{x}=\frac{A(x)-1}{x}.
\]
Substituting the expression for $A(x)$ gives proof.
\end{proof}
Also, using  Howroyd's identity, recorded at \cite{A005773}, the generating function sequence \href{https://oeis.org/A005773}{OEIS A005773} is equals $1/(1-xM(x))$, where
 \[
M(x)=\sum_{n\ge0}M_nx^n=\dfrac{1-x-\sqrt{1-2x-3x^2}}{2x^2}\]
 be the ordinary Motzkin generating function, and set $f(x)=xM(x)$. A short algebraic derivation from $M(x)=1+xM(x)+x^2M(x)^2$ (the classical Motzkin functional equation) gets, $f(x)=x\bigl(1+f(x)+f(x)^2\bigr),$
and solving the resulting quadratic in $f$ with $f(0)=0$ gives 
\[f(x)=\dfrac{1-x-\sqrt{1-2x-3x^2}}{2x}.\]
Substituting into $A(x)=1/(1-f(x))$ and rationalizing the denominator produces $\tfrac12+\tfrac12\sqrt{(1+x)/(1-3x)}$.

\begin{corollary}
\label{thm:motzkin-relation}
We have $a(n+1)=3\,a(n)-M(n-1), \quad n \ge 1,$
where $\M_n$ are the Motzkin numbers (\href{https://oeis.org/A001006}{OEIS A001006}).
\end{corollary}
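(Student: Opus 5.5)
Check: a = 1,1,2,5,13,35,96; M = 1,1,2,4,9,21. For n=1: a_2 = 2 = 3·1 − M_0 = 2. For n=2: a_3 = 5 = 6 − 1 = 5. For n=3: 13 = 15 − 2. Good.

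The plan is to translate the identity into a statement about generating functions and check it against the closed forms already obtained. We claim $a_{n+1}=3a_n-\M_{n-1}$ for all $n\geq1$. Multiply by $x^{n+1}$ and sum over $n\geq1$. The three sums are
\[
\sum_{n\geq1}a_{n+1}x^{n+1}=A(x)-1-x,\qquad
\sum_{n\geq1}3a_nx^{n+1}=3x\bigl(A(x)-1\bigr),\qquad
\sum_{n\geq1}\M_{n-1}x^{n+1}=x^2M(x).
\]
So the claim is equivalent to the single functional identity
\[
(1-3x)A(x)=1-2x-x^2M(x).
\]
The same bookkeeping read backwards recovers the coefficient statement for every $n\geq1$.

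To verify this identity, we use Howroyd's representation $A(x)=1/(1-f(x))$ with $f(x)=xM(x)$, so that $x^2M(x)=xf(x)$. The target becomes
\[
(1-3x)=\bigl(1-2x-xf\bigr)(1-f).
\]
Expanding the right-hand side gives $1-2x-xf-f+2xf+xf^2$. The identity therefore reduces to
\[
-x=-f+xf+xf^2,
\]
that is, $f=x(1+f+f^2)$. This is exactly the Motzkin functional equation recorded just before the corollary, which closes the argument.

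As an independent check, one could instead substitute the explicit square-root forms of $A(x)$ from Theorem~\ref{thm:GF} and of $M(x)$. Writing $\sqrt{(1+x)/(1-3x)}\,(1-3x)=\sqrt{(1+x)(1-3x)}=\sqrt{1-2x-3x^2}$, both sides become $\tfrac12\bigl(1-3x+\sqrt{1-2x-3x^2}\bigr)$.

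The only delicate point is the index bookkeeping: the sums must start at $n=1$, with the constant and linear terms of $A$ handled correctly, so that no boundary term is lost. It is worth confirming with the initial values $a_2=2=3\cdot1-\M_0$ and $a_3=5=3\cdot2-\M_1$. Everything else is routine algebra.
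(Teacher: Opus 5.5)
Your proof is correct. It also differs from the paper, which gives no argument of its own and only points to Lemma~2.6 of Yaqubi et al.\ and the OEIS entry.

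Your reduction of the recurrence for all $n\geq1$ to the single identity $(1-3x)A(x)=1-2x-x^2M(x)$ holds, and the constant and linear coefficients correctly absorb $a_0=a_1=1$. The further reduction, via $A=1/(1-f)$ and $x^2M=xf$, to the Motzkin functional equation $f=x(1+f+f^2)$ expands correctly. Your square-root cross-check, with both sides equal to $\tfrac12\bigl(1-3x+\sqrt{1-2x-3x^2}\bigr)$, also holds. Your route is self-contained: it uses only facts the paper records before the corollary (Howroyd's representation and the Motzkin functional equation).

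What it does not give is a reason for the identity. The path model supplies a shorter, combinatorial proof. For $n\geq1$, every Motzkin prefix of length $n$ is uniquely a prefix of length $n-1$ followed by one of $u,h,d$. The only forbidden extension is appending $d$ to a prefix ending at height $0$, and such prefixes are exactly the $\M_{n-1}$ Motzkin paths of length $n-1$. Hence $\MP_n=3\MP_{n-1}-\M_{n-1}$, which is the claim, since $a_{n+1}=\MP_n$ and $a_n=\MP_{n-1}$ for $n\geq1$. In generating-function form this reads $(1-3x)g(x)=1-xM(x)$. Substituting $A=1+xg$ turns it into exactly your identity $(1-3x)A=1-2x-x^2M$, so the two arguments verify the same functional equation: one algebraically, the other by counting.
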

\begin{proof}
See \cite[Lemma~2.6]{Yaqubi2023, A005773}.
\end{proof}
\begin{theorem}
\label{thm:riordan}
The two pairs $D_{\MP}:=(g(x),f(x))$ and $\widehat D:=(A(x),f(x))$
are proper Riordan arrays of the sequences $\MP_n$ and $a_n$, receptively.  The first column of $D_{\MP}$ is  $\MP_0,\MP_1,\MP_2,\ldots$, whereas the first column of
 $\widehat D$ is $a_0,a_1,a_2,\ldots.$Entries begin with
{\tiny
\[
\widehat D = \begin{pmatrix}
1 & 0 & 0 & 0 & 0 & 0 & \cdots \\
1 & 1 & 0 & 0 & 0 & 0 & \cdots \\
2 & 2 & 1 & 0 & 0 & 0 & \cdots \\
5 & 5 & 3 & 1 & 0 & 0 & \cdots \\
13 & 13 & 9 & 4 & 1 & 0 & \cdots \\
35 & 35 & 26 & 14 & 5 & 1 & \cdots \\
96 & 96 & 75 & 45 & 20 & 6 & \cdots \\
267 & 267 & 216 & 140 & 71 & 27 & \cdots
\end{pmatrix},
\quad
D_{\MP} = \begin{pmatrix}
1 & 0 & 0 & 0 & 0 & 0 & 0 & \cdots \\
2 & 1 & 0 & 0 & 0 & 0 & 0 & \cdots \\
5 & 3 & 1 & 0 & 0 & 0 & 0 & \cdots \\
13 & 9 & 4 & 1 & 0 & 0 & 0 & \cdots \\
35 & 26 & 14 & 5 & 1 & 0 & 0 & \cdots \\
96 & 75 & 45 & 20 & 6 & 1 & 0 & \cdots \\
267 & 216 & 140 & 71 & 27 & 7 & 1 & \cdots \\
750 & 623 & 427 & 238 & 105 & 35 & 8 & \cdots
\end{pmatrix}.
\]
}
\end{theorem}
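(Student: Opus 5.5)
The plan is to verify the three Riordan conditions for each pair, read off the first columns, and then justify the displayed entries. By Theorem~\ref{thm:GF} we have $g(0)=\MP_0=1$ and $A(0)=a_0=1$. From the discussion following Theorem~\ref{thm:GF}, $f(x)=xM(x)$ satisfies $f(x)=x\bigl(1+f(x)+f(x)^2\bigr)$, so $f(0)=0$. Comparing coefficients of $x$ gives $f'(0)=M_0=1\neq0$. Hence both $(g,f)$ and $(A,f)$ are proper Riordan arrays. The $k=0$ column has generating function $g(x)f(x)^0=g(x)$, respectively $A(x)$, so the first columns are $\MP_0,\MP_1,\dots$ and $a_0,a_1,\dots$, as claimed.

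For the displayed entries I would not expand $g f^k$ directly. Instead I would use the $\A$-sequence of $f$. Since $f=x\,\A(f)$ with $\A(t)=1+t+t^2$, standard Riordan theory \cite{Barry2017,Shapiro1991} gives the recurrence
\[
d_{n+1,k+1}=d_{n,k}+d_{n,k+1}+d_{n,k+2}
\]
for $k\ge0$, valid for any first component. With the first column supplied by Theorem~\ref{thm:GF}, this recurrence generates every row. For example, in $D_{\MP}$ it gives $d_{2,1}=2+1+0=3$ and $d_{3,1}=5+3+1=9$. The entries of $\widehat D$ follow in the same way.

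A useful consistency check comes from the identity $A=1+xg=1+f\cdot\frac{g}{M}$, which explains why the first two columns of $\widehat D$ coincide. Indeed $A=1/(1-f)$, so $[x^n]A=\sum_k[x^n]f^k$. The columns $k\ge1$ of $\widehat D$ are $f^k/(1-f)=\sum_{j\ge k}f^j$. For $k=1$ this is $A-1$, whose coefficients agree with those of $A$ for $n\ge1$.

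The only delicate step is the $\A$-sequence recurrence for the entries. If one prefers to avoid citing it, it follows from $f^{k+1}=x f^k(1+f+f^2)$ after multiplying by the first component and extracting $[x^{n+1}]$. Everything else is immediate from Theorem~\ref{thm:GF}.
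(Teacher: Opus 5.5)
Your proposal is correct and follows the paper's proof: properness comes from $g(0)=A(0)=1$, $f(0)=0$ and $f'(0)=1$, and the first columns are read off as $[x^n]g(x)=\MP_n$ and $[x^n]A(x)=a_n$. Your extra step, generating the displayed entries from $f^{k+1}=xf^k(1+f+f^2)$, is a genuine addition, since the paper's proof never justifies the tables; it is the same column recurrence the paper later derives in the proof of Theorem~\ref{thm:production}.
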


\begin{proof}
By Theorem~\ref{thm:GF}, $g(0)=1$ and $A(0)=1.$ Furthermore, $f(x)=xM(x)=x+x^2+2x^3+4x^4+\cdots,$
so $f(0)=0,$ and $f'(0)=1\neq0.$ Therefore both pairs $(g,f) \text{and} (A,f)$ are proper Riordan arrays.
By definition, the first column of $(g,f)$ is $[x^n]g(x)=\MP_n,$ while the first column of $(A,f)$ is $[x^n]A(x)=a_n.$
Thus the asserted first-column sequences follow.
\end{proof}

\begin{lemma}
\label{prop:two-riordan-arrays}
With the notation above, $A(x)=1+xg(x)$ and $A(x)f(x)=xg(x).$ Consequently, if put $D_{\MP}=(d_{n,k})_{n,k\geq0}$
and $\widehat D=(\widehat d_{n,k})_{n,k\geq0},$ then for $n,k\geq1$ yields $\widehat d_{n,k}=d_{n-1,k-1}.$
\end{lemma}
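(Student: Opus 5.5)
The first identity $A(x)=1+xg(x)$ is already contained in Theorem~\ref{thm:GF}, so I will simply cite it. For the second identity, $A(x)f(x)=xg(x)$, I will use Howroyd's representation recorded just before Corollary~\ref{thm:motzkin-relation}: $A(x)=1/(1-f(x))$ with $f=xM(x)$. This gives
\[
A(x)\bigl(1-f(x)\bigr)=1, \qquad\text{hence}\qquad A(x)f(x)=A(x)-1 .
\]
Combining this with $A(x)-1=xg(x)$ yields $A(x)f(x)=xg(x)$ immediately.

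The only delicate point is that the paper derives $A=1/(1-f)$ somewhat informally (“rationalizing the denominator”). As a safeguard, I would verify directly that $\frac{1}{1-f}=\frac12+\frac12\sqrt{\frac{1+x}{1-3x}}$, using the explicit formula $f=\frac{1-x-\sqrt{1-2x-3x^2}}{2x}$. Then
\[
1-f=\frac{3x-1+\sqrt{(1+x)(1-3x)}}{2x}.
\]
Multiplying numerator and denominator by the conjugate $\sqrt{(1+x)(1-3x)}-(3x-1)$, the denominator becomes $(1+x)(1-3x)-(1-3x)^2=4x(1-3x)$. So
\[
\frac{1}{1-f}=\frac{\sqrt{(1+x)(1-3x)}+1-3x}{2(1-3x)}=\frac12+\frac12\sqrt{\frac{1+x}{1-3x}}.
\]
This step is the main (if mild) obstacle: one has to be careful with branches, but both sides are power series with constant term $1$, so the identification is unambiguous.

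For the entrywise statement, I take $n,k\ge1$ and use the Riordan definition from Theorem~\ref{thm:riordan}:
\[
\widehat d_{n,k}=[x^n]\,A(x)f(x)^k=[x^n]\,\bigl(A(x)f(x)\bigr)f(x)^{k-1}.
\]
Substituting $A f=xg$ gives
\[
\widehat d_{n,k}=[x^n]\,x\,g(x)f(x)^{k-1}=[x^{n-1}]\,g(x)f(x)^{k-1}=d_{n-1,k-1}.
\]
Here $k\ge1$ is needed so that a factor of $f$ is available to absorb $A$, and $n\ge1$ is needed so that the index shift makes sense. This agrees with the displayed matrices: deleting the first row and column of $\widehat D$ reproduces $D_{\MP}$.
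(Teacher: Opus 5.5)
Your proof is correct and follows the same route as the paper: you cite Theorem~\ref{thm:GF} for $A=1+xg$, then extract coefficients via $\widehat d_{n,k}=[x^n]\,(Af)\,f^{k-1}=[x^{n-1}]\,g f^{k-1}=d_{n-1,k-1}$. The one difference is that you justify $Af=xg$ through $A=1/(1-f)$, with a correct conjugate computation; the paper only asserts that both identities follow from Theorem~\ref{thm:GF}, which by itself gives only the first, so your version closes a small gap in the paper's argument.
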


\begin{proof}
The first and second identity follow from Theorem~\ref{thm:GF}.  For $n,k\geq1$, we can write
\[
\widehat d_{n,k}
=[x^n]A(x)f(x)^k
=[x^{n-1}]g(x)f(x)^{k-1}
=d_{n-1,k-1}.
\]
\end{proof}

Paul Barry \cite{Barry2017}, using Riordan array $D$, define two power series $\mathcal A(y)=\sum_{j\ge0}A_jy^j,$ and $\mathcal Z(y)=\sum_{j\ge0}Z_jy^j,$ which 
called them $\A$- and $\Z$-sequence generating functions. They are characterized by $f(x)=x\mathcal A(f(x))$ and $g(x)=\frac{g(0)}{1-x\mathcal Z(f(x))}.$
Consequently, $d_{n+1,k}=\sum_{j\ge0}A_j\,d_{n,k-1+j},$ and $d_{n+1,0}=\sum_{j\ge0}Z_j\,d_{n,j}.$ For the classical Motzkin arrays, we have $f(x)=x(1+f(x)+f(x)^2).$
Hence the common $\A$-sequence is $\mathcal A(y)=1+y+y^2.$ Moreover, for the sequence $\C=\{a_n\}_{n\geq0}$, since $A(x)=\frac{1}{1-f(x)},$
and therefore the $\Z$-sequence of $\widehat D=(A,f)$ coincides with its $\A$-sequence. 

\begin{proposition}
For $\widehat D=(A,f)$, we have $\mathcal A(y)= \mathcal Z(y)=1+y+y^2.$
For $D_{\MP}$, $\mathcal A(y)=1+y+y^2,$ and $\mathcal Z(y)=2+y.$
\end{proposition}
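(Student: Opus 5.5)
The $\A$-sequence is determined by $f$ alone, so it is common to both arrays. From the characterization $f(x)=x\mathcal A(f(x))$ and the Motzkin functional equation $f(x)=x(1+f(x)+f(x)^2)$, we read off $\mathcal A(y)=1+y+y^2$. For uniqueness, note that $f$ has compositional inverse $\bar f(y)=y/(1+y+y^2)$. Hence $\mathcal A(y)=y/\bar f(y)$ is forced. This settles the $\A$-part for both $\widehat D$ and $D_{\MP}$.

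For the $\Z$-sequences we solve $g(x)=g(0)/(1-x\mathcal Z(f(x)))$ for $\mathcal Z$. Since $f$ is invertible, this gives $\mathcal Z(f(x))=\bigl(1-1/g(x)\bigr)/x$. We then rewrite the right-hand side as a function of $f$, substituting $x=f/(1+f+f^2)$.

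For $\widehat D$, the relation $A=1/(1-f)$ gives $1-1/A=f$. Therefore
\[x\mathcal Z(f)=f=x(1+f+f^2),\]
so $\mathcal Z(y)=1+y+y^2$.

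For $D_{\MP}$, Lemma~\ref{prop:two-riordan-arrays} gives $xg=A-1=f/(1-f)$, so $g=f/(x(1-f))$. Substituting $x=f/(1+f+f^2)$ yields
\[g=\frac{1+f+f^2}{1-f}.\]
Then
\[1-\frac1g=\frac{1+f+f^2-1+f}{1+f+f^2}=\frac{f(2+f)}{1+f+f^2}=x(2+f).\]
Hence $\mathcal Z(f(x))=2+f(x)$, that is, $\mathcal Z(y)=2+y$.

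The only step needing care is expressing $g$ in terms of $f$, which relies on Lemma~\ref{prop:two-riordan-arrays} together with the inversion $x=f/(1+f+f^2)$. As a sanity check against the displayed matrix, we verify $d_{1,0}=2d_{0,0}=2$ and $d_{2,0}=2\cdot2+1=5$.
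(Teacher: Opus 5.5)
Your proposal is correct and follows essentially the same route as the paper: you read $\mathcal A$ off from $f=x(1+f+f^2)$, and you obtain $\mathcal Z$ by writing $1/A$ and $1/g$ in the form $1-x\,\mathcal Z(f)$. The one difference is that you derive $g=1/(1-x(2+f))$ from $xg=A-1=f/(1-f)$ and $x=f/(1+f+f^2)$, whereas the paper simply asserts this identity, so your version supplies a justification the paper leaves out.
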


\begin{proof}
Since $f=x(1+f+f^2),$
the $A$-sequence of both arrays is $\A(y)=1+y+y^2.$ For $\widehat D$, since $A(x)=\frac1{1-f(x)}.$ Therefore $A(x)=\frac1{1-x(1+f+f^2)}.$ Thus $x\mathcal Z(f)=x(1+f+f^2),$
so $\mathcal Z(y)=1+y+y^2.$ For $D_{\MP}$, the identity $g=\frac1{1-2x-xf}$ gives $g=\frac1{1-x(2+f)},$
and hence $\mathcal Z(y)=2+y.$
\end{proof}

\begin{remark}
This distinction is the central structural feature:
\[
\begin{array}{c|cc}
&\mathcal A(y)&\mathcal Z(y)\\ \hline
D_{\MP}&1+y+y^2&2+y\\
\widehat D&1+y+y^2&1+y+y^2.
\end{array}
\]
Thus the two arrays have identical interior transition data and differ only
in their boundary data.
\end{remark}
%
Let $\overline D$ denote the matrix obtained from a Riordan array $D$
by deleting its first row. Following Barry's convention \cite{Barry2017}, the production
matrix of $D$ is $P_D=D^{-1}\overline D.$
\begin{theorem}
\label{thm:production}
Let $D_{\MP}=(d_{n,k})_{n,k\ge0},$ and $\widehat D=(\widehat d_{n,k})_{n,k\ge0}.$
Then, with $P_{\MP} := D_{\MP}^{-1}\overline{D}_{\MP}$, and $P_{\mathcal{C}} := \widehat{D}^{-1}\overline{\widehat{D}}$,the production matrices are
{\tiny
\begin{align*}
P_{\widehat D}=
\begin{pmatrix}
1&1&0&0&0&\cdots\\
1&1&1&0&0&\cdots\\
1&1&1&1&0&\cdots\\
0&0&1&1&1&\ddots\\
0&0&0&1&1&\ddots\\
\vdots&\vdots&\vdots&\ddots&\ddots&\ddots
\end{pmatrix}
, 
P_{\MP}=
\begin{pmatrix}
2&1&0&0&0&\cdots\\
1&1&1&0&0&\cdots\\
0&1&1&1&0&\cdots\\
0&0&1&1&1&\ddots\\
0&0&0&1&1&\ddots\\
\vdots&\vdots&\vdots&\ddots&\ddots&\ddots
\end{pmatrix},
\end{align*}
}

In particular, $P_{\MP}=P_{\MP}^{T},$ is symmetric, whereas $P_{\widehat D}\ne P_{\widehat D}^{T}.$
\end{theorem}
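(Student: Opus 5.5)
The plan is to verify the matrix identity $D\,P=\overline D$ column by column with generating functions, for each array $D=(G,f)$ with $G\in\{g,A\}$ and the displayed candidate $P$. This identity determines $P$ uniquely: $D$ is lower triangular with diagonal entries $[x^k]G f^k=G(0)f'(0)^k=1$, so $D$ is invertible, and then $P=D^{-1}\overline D$ follows from $DP=\overline D$.

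Write $P=(p_{i,k})$. The candidate matrices have the standard Riordan shape:
\begin{itemize}
\item the first column is the $\Z$-sequence, $p_{i,0}=Z_i$;
\item every later column is the shifted $\A$-sequence, $p_{i,k}=A_{i-k+1}$ for $k\geq1$ (with $A_j=0$ for $j<0$).
\end{itemize}
From the preceding Proposition, $\A(y)=1+y+y^2$ for both arrays, while $\Z(y)=1+y+y^2$ for $\widehat D$ and $\Z(y)=2+y$ for $D_{\MP}$. This reproduces exactly the displayed entries: the first columns $(1,1,1,0,\dots)^T$ and $(2,1,0,\dots)^T$, and the tridiagonal band of $1$'s elsewhere.

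The $k$-th column of $DP$ has generating function $\sum_i p_{i,k}\,G(x)f(x)^i$, since column $i$ of $D$ has generating function $Gf^i$. The $k$-th column of $\overline D$ has generating function $\bigl(Gf^k-[x^0]Gf^k\bigr)/x$.

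For $k\geq1$, the constant term of $Gf^k$ vanishes. The column sum is
\[
Gf^{k-1}(1+f+f^2)=Gf^{k-1}\cdot\frac{f}{x}=\frac{Gf^k}{x},
\]
using $f=x(1+f+f^2)$, i.e.\ $f=x\A(f)$.

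For $k=0$, the column sum is $G\,\Z(f)$. The characterization $G=1/(1-x\Z(f))$ from the preceding Proposition gives $G-1=xG\,\Z(f)$, hence $G\,\Z(f)=(G-1)/x$. This is exactly the generating function of the first column of $\overline D$. Concretely, for $D_{\MP}$ we use $g=1/(1-x(2+f))$, and for $\widehat D$ we use $A=1/(1-f)=1/(1-x(1+f+f^2))$.

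This establishes $DP=\overline D$ for both arrays, hence the displayed formulas. The only delicate point is the $k=0$ column, where the subtracted constant term $G(0)=1$ must be tracked; the identity $G-1=xG\,\Z(f)$ handles it.

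The final claim is then read off the entries. $P_{\MP}$ is the tridiagonal matrix with diagonal $(2,1,1,\dots)$ and all off-diagonal band entries equal to $1$, so it is symmetric. In $P_{\widehat D}$, the $(2,0)$ entry is $1$ while the $(0,2)$ entry is $0$, so $P_{\widehat D}\neq P_{\widehat D}^T$.
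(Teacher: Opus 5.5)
Your proof is correct. For $D_{\MP}$ it is the paper's argument in generating-function form. The paper extracts coefficients from $f^k=xf^{k-1}+xf^k+xf^{k+1}$ and $g=1+2xg+xfg$, which are exactly your column identities $Gf^{k-1}(1+f+f^2)=Gf^k/x$ and $G\,\Z(f)=(G-1)/x$.

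The genuine difference is in how you handle $\widehat D$. The paper does not redo the computation with $A$. Instead it transfers the $D_{\MP}$ recurrences through the shift relations $\widehat d_{n,0}=d_{n-1,0}$ and $\widehat d_{n,k}=d_{n-1,k-1}$ (from $A=1+xg$ and $Af=xg$). You rerun the same $\A$/$\Z$ verification directly with $A=1/(1-x(1+f+f^2))$.

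Your version is more uniform: one computation covers both arrays and shows that the production matrix depends only on the pair $(\A,\Z)$. It also sidesteps a slip in the paper's chain, which writes $d_{n,0}=d_{n-1,0}+d_{n-1,1}+d_{n-1,2}$. The correct middle term is $2d_{n-1,0}+d_{n-1,1}$; for example, $d_{2,0}=5$ but $d_{1,0}+d_{1,1}+d_{1,2}=3$. The paper's conclusion still holds because $\widehat d_{n,0}=\widehat d_{n,1}=d_{n-1,0}$ and $\widehat d_{n,2}=d_{n-1,1}$. The paper's route, for its part, makes the link between the two production matrices visible through the block structure of $\widehat D$.

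You also make explicit the invertibility of $D$ that turns $DP=\overline D$ into $P=D^{-1}\overline D$, which the paper leaves tacit. Both arguments take $g=1/(1-x(2+f))$ on trust. It follows in one line from $g=(A-1)/x=f/\bigl(x(1-f)\bigr)$ together with $x/f=1/(1+f+f^2)$.
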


\begin{proof}
Let $d_{n,k}=[x^n]g f^k$.  From $f=x(1+f+f^2)$ we have $f^k=x f^{k-1}+x f^k+x f^{k+1}.$ For $k\ge1$, multiplication by $g$ and coefficient extraction gives $d_{n+1,k}=d_{n,k-1}+d_{n,k}+d_{n,k+1}.$ For the zeroth column, $g=\frac1{1-2x-xf},$
hence $g=1+2xg+xf g.$
Taking coefficients gives $d_{n+1,0}=2d_{n,0}+d_{n,1}.$
These relations are exactly $\overline D_{\MP}=D_{\MP}P_{\MP}$.
 For $\widehat D$, the identity $A(x)=1+xg(x)$, gives $\widehat d_{0,0}=1, \widehat d_{n,0}=d_{n-1,0}\quad(n\ge1),
\widehat d_{n,k}=d_{n-1,k-1}\quad(n,k\ge1).$
Thus $\widehat d_{n+1,0}=d_{n,0}=d_{n-1,0}+d_{n-1,1}+d_{n-1,2}=\widehat d_{n,0}+\widehat d_{n,1}+\widehat d_{n,2},$
for $n\ge1$, with $\widehat d_{1,0}=1$. Similarly, $\widehat d_{n+1,1}=d_{n,0}=\widehat d_{n,0}+\widehat d_{n,1}+\widehat d_{n,2}$ and, for $k\ge2$,
$\widehat{d}_{n+1,k} = d_{n,k-1} = \widehat{d}_{n,k-1} + \widehat{d}_{n,k} + \widehat{d}_{n,k+1}.$ Consequently, $\overline{\widehat{D}} = \widehat{D} P_{\mathcal{C}},$ as claimed. 
\end{proof}
Using Theorem \ref{thm:production}, 
$\widehat D=\begin{pmatrix}
1&0\\
\mathbf m&D_{\MP}
\end{pmatrix},$
where
$
\mathbf m=\begin{pmatrix}
\MP_0\\
\MP_1\\
\MP_2\\
\vdots
\end{pmatrix}.$ 
Let $e_0=(1,0,0,\ldots)^T.$ If $r_n$ denotes the $n$th row of $D_{\MP}$, then $r_{n+1}=r_nP_{\MP},\quad r_0=e_0^T,$ hence
$r_n=e_0^TP_{\MP}^n.$
\begin{proposition}
\label{prop:MP-moment}
For every $n\geq0$, $\MP_n=e_0^TP_{\MP}^ne_0=(P_{\MP}^n)_{0,0}.$
\end{proposition}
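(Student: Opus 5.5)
The plan is to read the identity off the row recursion recorded just before the statement. By Theorem~\ref{thm:production}, $\overline D_{\MP}=D_{\MP}P_{\MP}$. Reading this identity row by row, the $(n+1)$st row of $D_{\MP}$ equals the $n$th row times $P_{\MP}$, that is, $r_{n+1}=r_nP_{\MP}$.

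The argument then has three steps.

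\textbf{Step 1 (initial row).} Since $D_{\MP}$ is lower triangular with $d_{0,0}=g(0)=1$, the zeroth row is $r_0=e_0^T$.

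\textbf{Step 2 (induction).} Induction on $n$ gives $r_n=e_0^TP_{\MP}^n$. Each product is well defined because $P_{\MP}$ is tridiagonal, apart from the entry $2$ in position $(0,0)$. Hence every row of $P_{\MP}^n$ has finitely many nonzero entries, and all matrix products involve only finite sums.

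\textbf{Step 3 (extract the entry).} Multiply on the right by $e_0$, which picks out the zeroth column:
\[
e_0^TP_{\MP}^ne_0=r_ne_0=d_{n,0}=[x^n]\,g(x)f(x)^0=[x^n]\,g(x)=\MP_n .
\]
The last equality is Theorem~\ref{thm:riordan}, which identifies the first column of $D_{\MP}$ with the sequence $\MP_n$.

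I expect no real obstacle here. The only point needing care is the indexing convention for $\overline D$ (deleting the first row). That convention is exactly what makes the rows of $\overline D_{\MP}$ the shifted rows $r_{n+1}$, so that $\overline D_{\MP}=D_{\MP}P_{\MP}$ yields $r_{n+1}=r_nP_{\MP}$ and not a relation between columns.

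As an optional sanity check, one can view $P_{\MP}$ as a weighted transfer matrix: a loop of weight $2$ at level $0$, loops of weight $1$ elsewhere, and unit up and down steps. Then $(P_{\MP}^n)_{0,0}$ counts weighted Motzkin excursions of length $n$. The two choices of level step at height $0$ can be matched with the combinatorics of Motzkin prefixes, for example via the known bijection with excursions having bicolored flat steps at height zero. This check is not needed for the proof.
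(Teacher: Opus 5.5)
Your proof is correct and is essentially the paper's argument: the paper likewise reads $r_{n+1}=r_nP_{\MP}$ with $r_0=e_0^T$ off $\overline D_{\MP}=D_{\MP}P_{\MP}$, concludes $r_n=e_0^TP_{\MP}^n$, and takes the zeroth component $d_{n,0}=\MP_n$. Your added remarks on the finiteness of the sums and on the indexing of $\overline D$ are harmless refinements, though note that $P_{\MP}$ is tridiagonal including the entry $2$ at position $(0,0)$, not ``apart from'' it.
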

\begin{proof}
The first column of $D_{\MP}$ is $(d_{n,0})_{n\geq0}=(\MP_n)_{n\geq0}.$ Since $r_n=e_0^TP_{\MP}^n,$ its zeroth component is
$d_{n,0}=e_0^TP_{\MP}^ne_0.$
\end{proof}
The symmetry of $P_{\MP}$ gives the following factorization.
\begin{theorem}
\label{thm:MP-Cholesky}
The Hankel matrix of the Motzkin-prefix numbers satisfies
$D_{\MP}D_{\MP}^{T}=\left(\MP_{i+j}\right)_{i,j\geq0}.$  Then $\det\left(\MP_{i+j}\right)_{0\leq i,j<n}=1.$
\end{theorem}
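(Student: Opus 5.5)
The plan is to use the row description $r_n=e_0^TP_{\MP}^n$ from Proposition~\ref{prop:MP-moment} together with the symmetry $P_{\MP}=P_{\MP}^T$ from Theorem~\ref{thm:production}. The key identity to aim for is
\[
\bigl(D_{\MP}D_{\MP}^T\bigr)_{i,j}=r_i\,r_j^T=e_0^TP_{\MP}^i\bigl(P_{\MP}^j\bigr)^Te_0=e_0^TP_{\MP}^iP_{\MP}^je_0=e_0^TP_{\MP}^{i+j}e_0=\MP_{i+j}.
\]
Here the second-to-last step uses $(P_{\MP}^j)^T=(P_{\MP}^T)^j=P_{\MP}^j$, and the last step is exactly Proposition~\ref{prop:MP-moment}. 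So the order of steps is:
\begin{enumerate}
\item recall $r_n=e_0^TP_{\MP}^n$, which follows by induction from $\overline D_{\MP}=D_{\MP}P_{\MP}$, i.e.\ $r_{n+1}=r_nP_{\MP}$;
\item write the $(i,j)$ entry of $D_{\MP}D_{\MP}^T$ as the inner product $r_ir_j^T$;
\item use the symmetry of $P_{\MP}$ to merge the two powers into $P_{\MP}^{i+j}$;
\item invoke Proposition~\ref{prop:MP-moment}.
\end{enumerate}

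One should check that all the infinite products are legitimate. $P_{\MP}$ is tridiagonal, so $e_0^TP_{\MP}^i$ has finite support (entries $0,\dots,i$ only). Hence every product above is a finite sum, and associativity holds without convergence issues. Likewise $D_{\MP}$ is lower triangular, so each entry of $D_{\MP}D_{\MP}^T$ is the finite sum $\sum_{k\le\min(i,j)}d_{i,k}d_{j,k}$.

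For the determinant, truncate to the $n\times n$ leading block. Because $D_{\MP}$ is lower triangular, $(D_{\MP}D_{\MP}^T)_{i,j}$ for $i,j<n$ involves only $d_{i,k},d_{j,k}$ with $k<n$. Therefore
\[
\bigl(\MP_{i+j}\bigr)_{0\le i,j<n}=D_nD_n^T,
\]
where $D_n$ is the leading $n\times n$ block of $D_{\MP}$. Since $D_{\MP}$ is a Riordan array with $g(0)=1$ and $f'(0)=1$, its diagonal entries are $[x^k]g f^k=g(0)f'(0)^k=1$. The table in Theorem~\ref{thm:riordan} confirms this. Hence $\det D_n=1$, and so $\det(\MP_{i+j})_{0\le i,j<n}=(\det D_n)^2=1$.

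I expect no real obstacle. The only points needing care are the finite-support justification for the infinite matrix products, and making explicit that truncation commutes with the factorization because $D_{\MP}$ is lower triangular.
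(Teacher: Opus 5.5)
Your proposal is correct and follows the paper's proof: write $(D_{\MP}D_{\MP}^T)_{i,j}=r_ir_j^T$ with $r_n=e_0^TP_{\MP}^n$, use $P_{\MP}=P_{\MP}^T$ to merge the powers into $P_{\MP}^{i+j}$, and invoke Proposition~\ref{prop:MP-moment}. Your extra remarks (finite support from the tridiagonal structure, truncation to the leading $n\times n$ block via lower triangularity, and the unit diagonal giving $\det D_n=1$) spell out the determinant step that the paper only says ``follows directly from the first part.''
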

\begin{proof}
The $(i,j)$-entry of $D_{\MP}D_{\MP}^{T}$ is $(D_{\MP}D_{\MP}^{T})_{i,j}=r_ir_j^T.$
Using $r_i=e_0^TP_{\MP}^i$ and $r_j^T=(P_{\MP}^j)^Te_0,$ we obtain
\[(D_{\MP}D_{\MP}^{T})_{i,j}=e_0^TP_{\MP}^i(P_{\MP}^j)^Te_0=e_0^TP_{\MP}^{i+j}e_0,\]
because $P_{\MP}$ is symmetric. By
Proposition~\ref{prop:MP-moment}, $e_0^TP_{\MP}^{i+j}e_0=\MP_{i+j}.$

The proof of second part follows directly from first part; see also Krattenthaler and Yaqubi~\cite{KrattenthalerYaqubi2018}.
\end{proof}
\begin{corollary}
\label{cor:MP-hankel}
For every $n\geq1$, $\det\left(\MP_{i+j}\right)_{0\leq i,j<n}=1.$
\end{corollary}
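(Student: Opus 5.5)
The plan is to read the corollary off the factorization in Theorem~\ref{thm:MP-Cholesky}, restricted to finite leading sections. Write $H=(\MP_{i+j})_{i,j\geq0}$, and let $H_n$ and $(D_{\MP})_n$ denote the leading principal $n\times n$ submatrices, with indices $0\leq i,j<n$. The first step is to pass from the infinite identity $D_{\MP}D_{\MP}^{T}=H$ to the finite identity $H_n=(D_{\MP})_n(D_{\MP})_n^{T}$.

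This truncation is the only point needing care. The entry $(D_{\MP}D_{\MP}^{T})_{i,j}$ equals $\sum_{k\geq0}d_{i,k}d_{j,k}$. Since $D_{\MP}$ is lower triangular, $d_{i,k}=0$ for $k>i$. Hence for $i,j<n$ the sum involves only indices $k\leq\min(i,j)<n$, and it is exactly the $(i,j)$-entry of $(D_{\MP})_n(D_{\MP})_n^{T}$. So no tail contributions are lost by truncating.

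Next, take determinants and use multiplicativity:
\[
\det H_n=\det (D_{\MP})_n\cdot\det (D_{\MP})_n^{T}=\bigl(\det (D_{\MP})_n\bigr)^2 .
\]
The matrix $(D_{\MP})_n$ is lower triangular, so its determinant is the product of its diagonal entries $d_{k,k}=[x^k]\,g(x)f(x)^k$. Because $g(0)=1$ and $f(x)=x+x^2+\cdots$ with $f'(0)=1$ (Theorem~\ref{thm:riordan}), the lowest-order term of $g(x)f(x)^k$ is $x^k$ with coefficient $g(0)f'(0)^k=1$. Hence every diagonal entry equals $1$, as the displayed array $D_{\MP}$ also shows.

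Therefore $\det (D_{\MP})_n=1$, and so $\det\left(\MP_{i+j}\right)_{0\leq i,j<n}=1^2=1$ for every $n\geq1$. There is no real obstacle: everything rests on Theorem~\ref{thm:MP-Cholesky}, which in turn relies on the symmetry of $P_{\MP}$ from Theorem~\ref{thm:production}. The only step deserving explicit mention is the truncation argument via lower triangularity.
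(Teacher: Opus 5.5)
Your proof is correct and follows the paper's route: restrict the factorization $D_{\MP}D_{\MP}^{T}=(\MP_{i+j})_{i,j\geq0}$ of Theorem~\ref{thm:MP-Cholesky} to the leading $n\times n$ sections, then use that $D_{\MP}$ is lower unitriangular. The paper only says the result ``follows directly''; you make explicit two points it leaves implicit, namely why truncation is harmless (lower triangularity) and why every diagonal entry is $1$ (because $g(0)=f'(0)=1$).
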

\begin{proof}
The proof follows directly from Theorem~\ref{thm:MP-Cholesky}; see also Krattenthaler and Yaqubi~\cite{KrattenthalerYaqubi2018}.
\end{proof}
Define the Riordan array $R=\left(\frac{1}{1+x+x^2},x\right).$
Its inverse is $R^{-1}=(1+x+x^2,x).$
{\tiny
\begin{align*}
R &= 
\begin{pmatrix}
 1 &  0 &  0 &  0 &  0 &  0 & \cdots \\
-1 &  1 &  0 &  0 &  0 &  0 & \cdots \\
 0 & -1 &  1 &  0 &  0 &  0 & \cdots \\
 1 &  0 & -1 &  1 &  0 &  0 & \cdots \\
-1 &  1 &  0 & -1 &  1 &  0 & \cdots \\
 0 & -1 &  1 &  0 & -1 &  1 & \cdots \\
\vdots & \vdots & \vdots & \vdots & \vdots & \vdots & \ddots
\end{pmatrix},
&
R^{-1} &= 
\begin{pmatrix}
 1 &  0 &  0 &  0 &  0 &  0 & \cdots \\
 1 &  1 &  0 &  0 &  0 &  0 & \cdots \\
 1 &  1 &  1 &  0 &  0 &  0 & \cdots \\
 0 &  1 &  1 &  1 &  0 &  0 & \cdots \\
 0 &  0 &  1 &  1 &  1 &  0 & \cdots \\
 0 &  0 &  0 &  1 &  1 &  1 & \cdots \\
\vdots & \vdots & \vdots & \vdots & \vdots & \vdots & \ddots
\end{pmatrix}.
\end{align*}}
The matrix $R^{-1}$  is a lower-triangular matrix whose
entries are constant along each diagonal parallel to the main diagonal, with entries
\[
(R^{-1})_{j,k}=
\begin{cases}
1,&j-k\in\{0,1,2\},\\
0,&\text{otherwise}.
\end{cases}
\]
Such matrix is called  lower-triangular Toeplitz.
\begin{theorem}
\label{thm:boundary-factorization}
The two Riordan arrays satisfy $\widehat D=D_{\MP}R.$
Equivalently, $(A,f)=(g,f)\left(\frac{1}{1+x+x^2},x\right).$
\end{theorem}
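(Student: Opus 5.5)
By the fundamental theorem of Riordan arrays, the product of two Riordan arrays is computed by the rule $(g_1,f_1)(g_2,f_2)=\bigl(g_1\cdot(g_2\circ f_1),\,f_2\circ f_1\bigr)$. The plan is to apply this rule to $D_{\MP}R=(g,f)\left(\frac{1}{1+x+x^2},x\right)$. Since the second component of $R$ is the identity $x$, the product has second component $f$. Its first component is
\[
g(x)\cdot\frac{1}{1+f(x)+f(x)^2}.
\]
So the whole theorem reduces to the single generating-function identity
\[
A(x)\bigl(1+f(x)+f(x)^2\bigr)=g(x).
\]

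To prove this identity I will use the $\A$-sequence relation $f=x(1+f+f^2)$, which rewrites $1+f+f^2=f/x$. The identity then becomes $A(x)f(x)=xg(x)$, and this is exactly the second identity of Lemma~\ref{prop:two-riordan-arrays}. Should one want that lemma justified independently, it follows from $A=1/(1-f)$ and $g=1/(1-x(2+f))$, which were recorded above. Indeed, we have
\[
1-2x-xf=1-x(1+f+f^2)-x(1-f^2)+\dots
\]
More cleanly, since $A=1+xg$ and $A=1/(1-f)$, we get $xg=A-1=f/(1-f)=Af$.

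Thus $D_{\MP}R=(g/(1+f+f^2),f)=(xg/f,f)=(A,f)=\widehat D$, as claimed. An alternative, entrywise check uses the production matrices. Since $R^{-1}$ is Toeplitz with three diagonals of $1$s, the statement $\widehat D R^{-1}=D_{\MP}$ says that $d_{n,k}=\widehat d_{n,k}+\widehat d_{n,k+1}+\widehat d_{n,k+2}$. This follows from the recurrences in the proof of Theorem~\ref{thm:production} together with $\widehat d_{n,k}=d_{n-1,k-1}$. It is only a consistency check and is not needed.

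The only step that needs care is recognising that the factor $\frac{1}{1+f+f^2}$ arising from the composition $g_2\circ f_1$ equals $x/f$. Once that substitution is made, the argument is a one-line consequence of Lemma~\ref{prop:two-riordan-arrays}. I do not expect any genuine obstacle. The minor point to watch is that the composition is taken in the correct order, namely $g_2\circ f_1$ with $f_1=f$, and not $g_1\circ f_2$.
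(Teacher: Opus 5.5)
Your proposal is correct and follows the paper's proof essentially verbatim: Riordan multiplication gives $D_{\MP}R=\bigl(g/(1+f+f^2),f\bigr)$, the relation $f=x(1+f+f^2)$ turns the first component into $xg/f$, and $xg=Af$ (Lemma~\ref{prop:two-riordan-arrays}) identifies it with $A$. The only blemish is the aborted line ``$1-2x-xf=1-x(1+f+f^2)-x(1-f^2)+\dots$'', which is not a valid identity and should be deleted; your next derivation, $xg=A-1=f/(1-f)=Af$, already justifies the needed identity.
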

\begin{proof}
By the multiplication rule for ordinary Riordan arrays, $D_{\MP}R=\left(\frac{g(x)}{1+f(x)+f(x)^2},f(x)\right).$
Since $f=x(1+f+f^2),$
we have $1+f+f^2=\frac{f}{x}.$ Therefore $\frac{g(x)}{1+f(x)+f(x)^2}=\frac{xg(x)}{f(x)}.$ Using $xg(x)=A(x)f(x),$
we obtain $\frac{xg(x)}{f(x)}=A(x).$ Hence $D_{\MP}R=(A,f)=\widehat D.$
\end{proof}
\begin{corollary}
\label{cor:local-inverse}
From $D_{\MP}=\widehat DR^{-1},$
we obtain \[d_{n,k}=\sum_{j=k}^{n}\widehat d_{n,j}(R^{-1})_{j,k}=\widehat d_{n,k}+\widehat d_{n,k+1}+\widehat d_{n,k+2},\]
with the convention that $\widehat d_{n,j}=0\quad\text{for }j>n.$ Conversely, since $R=\left(\frac{1}{1+x+x^2},x\right),$
we have
\[
\frac{1}{1+x+x^2}
=
1-x+x^3-x^4+x^6-x^7+x^9-x^{10}+\cdots.
\]
Hence $\widehat D=D_{\MP}R,$
and therefore $\widehat d_{n,k}=\sum_{j=k}^{n}d_{n,j}\,r_{j-k},$
where
\[
r_m=
\begin{cases}
1,&m\equiv0\pmod3,\\
-1,&m\equiv1\pmod3,\\
0,&m\equiv2\pmod3.
\end{cases}
\]
Thus
\[
\widehat d_{n,k}=d_{n,k}-d_{n,k+1}+d_{n,k+3}-d_{n,k+4}+d_{n,k+6}-d_{n,k+7}+\cdots,
\]
where the sum terminates when the index exceeds $n$. Equivalently,
\[
\widehat d_{n,k}=\sum_{\substack{k\leq j\leq n\\j-k\equiv0\pmod3}}d_{n,j}-\sum_{\substack{k\leq j\leq n\\j-k\equiv1\pmod3}}d_{n,j}.
\]

For the first column, since $\widehat d_{n,0}=a_n,$
this becomes
\[a_n=d_{n,0}-d_{n,1}+d_{n,3}-d_{n,4}+d_{n,6}-d_{n,7}+\cdots.
\]
\end{corollary}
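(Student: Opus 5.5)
The plan is to read both identities in Corollary~\ref{cor:local-inverse} off the factorization $\widehat D=D_{\MP}R$ of Theorem~\ref{thm:boundary-factorization}. Both are plain matrix products of lower-triangular matrices, written entrywise, so the argument reduces to determining the entries of $R$ and of $R^{-1}$.

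For the first identity, multiply $\widehat D=D_{\MP}R$ on the right by $R^{-1}=(1+x+x^2,x)$. This gives $D_{\MP}=\widehat DR^{-1}$; the inverse is the one stated in the paper, and it can be checked by the Riordan multiplication rule, since $\bigl(\tfrac1{1+x+x^2},x\bigr)(1+x+x^2,x)=(1,x)$. The column generating function of $R^{-1}$ is $x^k(1+x+x^2)$, so
\[
(R^{-1})_{j,k}=[x^j]\,x^k(1+x+x^2)=1 \quad\text{exactly when } j-k\in\{0,1,2\},
\]
and $0$ otherwise. Expanding $d_{n,k}=\sum_j \widehat d_{n,j}(R^{-1})_{j,k}$ leaves only three terms, $\widehat d_{n,k}+\widehat d_{n,k+1}+\widehat d_{n,k+2}$. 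By triangularity $\widehat d_{n,j}=0$ for $j>n$, which is exactly the stated convention.

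For the converse, expand $\frac1{1+x+x^2}=\frac{1-x}{1-x^3}=(1-x)\sum_{m\ge0}x^{3m}$. Its coefficients are $r_m=1,-1,0$ according as $m\equiv0,1,2\pmod 3$. Since $R=(\frac1{1+x+x^2},x)$ is Toeplitz, $R_{j,k}=[x^{j-k}]\frac1{1+x+x^2}=r_{j-k}$ for $j\ge k$, and $R_{j,k}=0$ for $j<k$. Hence $\widehat d_{n,k}=\sum_{j=k}^n d_{n,j}r_{j-k}$. Grouping the terms by the residue of $j-k$ modulo $3$ gives the alternating sum and the two-sum form. Taking $k=0$ and using $\widehat d_{n,0}=[x^n]A=a_n$ gives the formula for $a_n$.

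There is no real obstacle here. The only points needing care are that infinite lower-triangular matrix products are well defined, since each entry is a finite sum, and the verification of the identity $\frac1{1+x+x^2}=\frac{1-x}{1-x^3}$ that produces the period-three coefficients.
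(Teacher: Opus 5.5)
Your proposal is correct and takes essentially the paper's route: you read the first identity off $D_{\MP}=\widehat D R^{-1}$ using the banded lower-triangular Toeplitz form of $R^{-1}$, and the converse off $\widehat D=D_{\MP}R$ using the period-three coefficients of $\frac{1}{1+x+x^2}$. Your derivation of that expansion via $\frac{1}{1+x+x^2}=\frac{1-x}{1-x^3}$ justifies a step the paper only asserts, and so do your remarks that the infinite triangular products are well defined.
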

\begin{theorem}
\label{thm:production-conjugation}
The production matrices satisfy $P_{\widehat D}=R^{-1}P_{\MP}R.$
\end{theorem}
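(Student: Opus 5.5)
We start from the factorization $\widehat D=D_{\MP}R$ of Theorem~\ref{thm:boundary-factorization} together with the definition of production matrices, $P_{\widehat D}=\widehat D^{-1}\overline{\widehat D}$ and $P_{\MP}=D_{\MP}^{-1}\overline D_{\MP}$. Since all matrices involved are lower triangular with unit diagonal, inverses exist and are again lower triangular. Each entry of a product of lower-triangular matrices is a finite sum, so associativity is unproblematic even though the matrices are infinite. From the factorization we get $\widehat D^{-1}=R^{-1}D_{\MP}^{-1}$.

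The key step is to understand how deleting the first row interacts with right multiplication. Write $\overline M=SM$, where $S=(\delta_{i+1,j})_{i,j\ge0}$ is the shift matrix. Then
\[
\overline{\widehat D}=S\widehat D=S(D_{\MP}R)=(SD_{\MP})R=\overline D_{\MP}R .
\]
Here associativity is fine because $S$ has a single nonzero entry in each row and $R$ is lower triangular, so every entry is a finite sum. Combining this with the expression for $\widehat D^{-1}$ gives
\[
P_{\widehat D}=\widehat D^{-1}\overline{\widehat D}=R^{-1}D_{\MP}^{-1}\overline D_{\MP}R=R^{-1}P_{\MP}R,
\]
which is the claim.

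The main obstacle is justifying associativity in $R^{-1}\bigl(D_{\MP}^{-1}\overline D_{\MP}\bigr)R$. The factor $\overline D_{\MP}$ is not lower triangular, since it is lower Hessenberg. Even so, the entries of $D_{\MP}^{-1}\overline D_{\MP}$ are finite sums because $D_{\MP}^{-1}$ is lower triangular. Multiplying by $R$ on the right is also finite, because $P_{\MP}$ is banded: it is tridiagonal by Theorem~\ref{thm:production}. This bandedness lets us regroup the products freely.

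As a consistency check, it is easy to verify the identity directly in the equivalent form $R\,P_{\widehat D}=P_{\MP}R$, using the explicit entries of $P_{\MP}$ and $P_{\widehat D}$ from Theorem~\ref{thm:production}. Away from the first rows, both sides reduce to the Toeplitz product of $1+x+x^2$ with the banded matrix, and in the interior they agree since $P_{\MP}$ and $P_{\widehat D}$ coincide there. The first three rows require a small finite check, which confirms that the boundary corrections match.
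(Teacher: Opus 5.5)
Your proof is correct and is essentially the paper's argument. Both rest on $\overline{\widehat D}=\overline D_{\MP}R$ obtained from $\widehat D=D_{\MP}R$: the paper writes $D_{\MP}P_{\MP}R=D_{\MP}RP_{\widehat D}$ and cancels $D_{\MP}$, while you invert directly via $\widehat D^{-1}=R^{-1}D_{\MP}^{-1}$, and your shift-matrix formulation and finiteness remarks make explicit the associativity that the paper uses tacitly.
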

\begin{proof}
From $\widehat D=D_{\MP}R$, we obtain $\overline{\widehat D}=\overline{D}_{\MP}R.$ Since $\overline{D}_{\MP}=D_{\MP}P_{\MP},$
we have $\overline{\widehat D}=D_{\MP}P_{\MP}R$ On the other hand,
$\overline{\widehat D}=\widehat DP_{\widehat D}=D_{\MP}RP_{\widehat D}.$
Since $D_{\MP}$ is invertible, $P_{\MP}R=RP_{\widehat D},$
and therefore $P_{\widehat D}=R^{-1}P_{\MP}R.$
\end{proof}
\begin{corollary}
\label{cor:production-powers}
For $n,m\geq0$, $P_{\widehat D}^{\,m}=R^{-1}P_{\MP}^{\,m}R,$ and $a_n=(P_{\widehat D}^{\,n})_{0,0}.$
\end{corollary}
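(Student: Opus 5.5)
The first identity follows from Theorem~\ref{thm:production-conjugation} by induction on $m$. The case $m=0$ is $R^{-1}IR=I$, and $m=1$ is the theorem itself. For the inductive step I would write
\[
P_{\widehat D}^{\,m+1}=P_{\widehat D}^{\,m}P_{\widehat D}=\bigl(R^{-1}P_{\MP}^{\,m}R\bigr)\bigl(R^{-1}P_{\MP}R\bigr)=R^{-1}P_{\MP}^{\,m+1}R.
\]
The only point needing a remark is that these are products of infinite matrices. $R$ and $R^{-1}$ are lower triangular and $P_{\MP}$ is tridiagonal, so every entry of each product is a finite sum. Hence the products are associative, and $RR^{-1}=I$ holds as infinite matrices, since both are Riordan arrays that are mutually inverse.

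For the second identity I would mimic Proposition~\ref{prop:MP-moment} directly for $\widehat D$. Let $\widehat r_n$ denote the $n$th row of $\widehat D$. The relation $\overline{\widehat D}=\widehat D P_{\widehat D}$ from Theorem~\ref{thm:production} says $\widehat r_{n+1}=\widehat r_nP_{\widehat D}$, and $\widehat r_0=e_0^T$ because $\widehat d_{0,0}=1$ and $\widehat D$ is lower triangular. By induction $\widehat r_n=e_0^TP_{\widehat D}^{\,n}$, and taking the zeroth component gives $a_n=\widehat d_{n,0}=(P_{\widehat D}^{\,n})_{0,0}$.

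As a consistency check, I would also compute via the first identity. Since $R$ is lower triangular with unit diagonal, $e_0^TR^{-1}$ is the zeroth row of $R^{-1}$, which is $e_0^T$. Therefore $(P_{\widehat D}^{\,n})_{0,0}=e_0^TP_{\MP}^{\,n}Re_0$. Using $r_n=e_0^TP_{\MP}^n$, this equals $\sum_j d_{n,j}R_{j,0}$, which is precisely the alternating sum for $a_n$ in Corollary~\ref{cor:local-inverse}. So the two routes agree.

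I expect no real obstacle. The one thing to watch is the well-definedness and associativity of the infinite matrix products, which is settled by the finite-support (banded and triangular) observation above.
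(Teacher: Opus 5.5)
Your proposal is correct and is essentially the argument the paper leaves implicit, since the corollary is stated without proof: the first identity telescopes the conjugation of Theorem~\ref{thm:production-conjugation} over $m$ factors, and $a_n=(P_{\widehat D}^{\,n})_{0,0}$ follows either from the row recursion $\widehat r_{n+1}=\widehat r_nP_{\widehat D}$, exactly as in Proposition~\ref{prop:MP-moment}, or from $e_0^TR^{-1}=e_0^T$ together with $D_{\MP}R=\widehat D$, and you give both routes. Your remark on infinite products is a welcome addition that the paper omits; the cleanest justification is that $R$, $R^{-1}$, $P_{\MP}$ and $P_{\widehat D}$ are all row-finite, so the relevant double sums are finite and associativity holds.
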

\begin{corollary}
\label{cor:strict-endpoint-inequality}
Put $m_{n,j}=(P_{\MP}^{\,n})_{0,j}=d_{n,j}.$
Thus \(m_{n,j}\) is the number of Motzkin paths of length \(n\)
starting at height \(0\) and ending at height \(j\).For every \(n\geq k\geq0\),
\[\sum_{\substack{k\leq j\leq n\\j-k\equiv0\pmod3}}m_{n,j}>\sum_{\substack{k\leq j\leq n\\j-k\equiv1\pmod3}}m_{n,j}.\]
\end{corollary}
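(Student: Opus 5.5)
The plan is to read the difference of the two sums as an entry of $\widehat D$, via Corollary~\ref{cor:local-inverse}, and then show that every entry of $\widehat D$ in the lower triangle is positive. With $m_{n,j}=d_{n,j}$, the left side minus the right side of the claimed inequality is exactly
\[
\sum_{\substack{k\leq j\leq n\\j-k\equiv0\pmod3}}d_{n,j}-\sum_{\substack{k\leq j\leq n\\j-k\equiv1\pmod3}}d_{n,j}=\widehat d_{n,k},
\]
by the last displayed formula in Corollary~\ref{cor:local-inverse}. So the statement reduces to $\widehat d_{n,k}>0$ for all $n\geq k\geq 0$.

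Positivity of $\widehat d_{n,k}$ will come from the block structure recorded after Theorem~\ref{thm:production}. We have $\widehat d_{0,0}=1$, $\widehat d_{n,0}=d_{n-1,0}=\MP_{n-1}$ for $n\geq1$, and $\widehat d_{n,k}=d_{n-1,k-1}$ for $n,k\geq1$ (Lemma~\ref{prop:two-riordan-arrays}). It therefore suffices to show $d_{n,k}>0$ for $n\geq k\geq0$. This holds because $d_{n,k}=(P_{\MP}^n)_{0,k}$ counts Motzkin paths of length $n$ from height $0$ to height $k$, with horizontal steps at height $0$ weighted by $2$. For $n\geq k$ such a path exists: take $k$ up steps followed by $n-k$ horizontal steps at height $k$.

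If one prefers to avoid the combinatorial reading, positivity follows by induction on $n$ from the recurrences in the proof of Theorem~\ref{thm:production}. These are $d_{n+1,k}=d_{n,k-1}+d_{n,k}+d_{n,k+1}$ for $k\geq1$ and $d_{n+1,0}=2d_{n,0}+d_{n,1}$. All coefficients are nonnegative, and each has $d_{n+1,k}\geq d_{n,k-1}>0$ or $d_{n+1,k}\geq d_{n,k}>0$ whenever $k\leq n+1$. The base case is $d_{0,0}=1$.

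The main obstacle is not positivity but the reduction itself. One must check that the alternating sum in Corollary~\ref{cor:local-inverse} really equals $\widehat d_{n,k}$, with all terms $j>n$ vanishing. This rests on the identity $\widehat D=D_{\MP}R$ of Theorem~\ref{thm:boundary-factorization} together with the mod-$3$ pattern of the coefficients of $1/(1+x+x^2)$. Given those earlier results, the argument is just this identification followed by the positivity check.
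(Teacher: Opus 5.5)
Your proof is correct and is essentially the argument the paper intends. By Corollary~\ref{cor:local-inverse} (that is, $\widehat D=D_{\MP}R$), the difference of the two sums is exactly $\widehat d_{n,k}$, and this entry is positive since $\widehat d_{0,0}=1$, $\widehat d_{n,0}=\MP_{n-1}$, and $\widehat d_{n,k}=d_{n-1,k-1}$ with $d_{n',k'}>0$ for $n'\geq k'$.

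You were also right to read $m_{n,j}=(P_{\MP}^{\,n})_{0,j}$ as a count in which horizontal steps at level $0$ carry weight $2$ (so $m_{1,0}=2$). Under the literal unweighted Motzkin-path reading in the statement, $R$ turns the Motzkin triangle $(M,xM)$ into $(1,xM)$, and the case $k=0$, $n\geq1$ would give equality rather than strict inequality.
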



For $r \in \{0,1,2\}$, let \(E_r(n) = \sum_{\substack{0\leq j\leq n\\ j\equiv r\pmod3}} d_{n,j}\)
denote the number of Motzkin paths of length $n$ whose final height is congruent to $r$ modulo $3$.

\begin{theorem}
\label{thm:period3-distribution}
For every $n\geq1$, the endpoint totals modulo $3$ form an arithmetic progression with common difference $a_n$:
\begin{equation*}
E_0(n) = 3^{n-1} + a_n, \qquad E_1(n) = 3^{n-1}, \qquad E_2(n) = 3^{n-1} - a_n.
\end{equation*}
\end{theorem}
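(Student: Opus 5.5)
The plan is to obtain three linear relations among $E_0(n),E_1(n),E_2(n)$ and solve them: the total $E_0+E_1+E_2$, and the two consecutive differences $E_0-E_1$ and $E_1-E_2$. Each relation should come from a statement already in the paper, applied to the row $r_n=e_0^TP_{\MP}^n$ of $D_{\MP}$.

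\emph{Total.} I will show $E_0(n)+E_1(n)+E_2(n)=\sum_j d_{n,j}=3^n$. Theorem~\ref{thm:production} shows that every row of $P_{\MP}$ sums to $3$. Row $0$ is $(2,1,0,\dots)$, and every later row has three ones. So $P_{\MP}\mathbf 1=3\mathbf 1$, where $\mathbf 1$ is the all-ones column vector. Hence
\[\textstyle\sum_j d_{n,j}=r_n\mathbf 1=e_0^TP_{\MP}^n\mathbf 1=3^n.\]
This is legitimate because $r_n$ has finitely many nonzero entries, so no convergence issue arises. Equivalently, the row sums of $(g,f)$ have generating function $g/(1-f)=1/(1-3x)$, which could serve as a check.

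\emph{First difference.} Corollary~\ref{cor:local-inverse} with $k=0$ gives
\[a_n=\widehat d_{n,0}=\sum_{j\equiv 0}d_{n,j}-\sum_{j\equiv1}d_{n,j}=E_0(n)-E_1(n),\]
with all congruences taken modulo $3$.

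\emph{Second difference.} The same corollary with $k=1$ gives
\[\widehat d_{n,1}=\sum_{j-1\equiv0}d_{n,j}-\sum_{j-1\equiv1}d_{n,j}=E_1(n)-E_2(n).\]
Here I use that the term $j=0$ does not appear, but it lies in neither residue class $1$ nor $2$, so the sums are exactly $E_1(n)$ and $E_2(n)$. For $n\ge1$, Lemma~\ref{prop:two-riordan-arrays} gives $\widehat d_{n,1}=d_{n-1,0}=\MP_{n-1}=a_n$. This is the one place where the hypothesis $n\ge1$ enters.

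\emph{Solving.} Set $E_0=E_1+a_n$ and $E_2=E_1-a_n$. The total then becomes $3E_1=3^n$, so $E_1(n)=3^{n-1}$, and the other two formulas follow immediately.

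The only delicate step is the $k=1$ case: I need to confirm that the alternating pattern of Corollary~\ref{cor:local-inverse}, shifted by one index, really isolates $E_1-E_2$, including the boundary index $j=0$. The row-sum argument and the algebra are routine. As a sanity check, the small cases $n=1,2$ agree with the displayed rows $(2,1)$ and $(5,3,1)$ of $D_{\MP}$.
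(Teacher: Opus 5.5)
Your proof is correct and follows the paper's argument. The $k=0$ and $k=1$ cases of Corollary~\ref{cor:local-inverse} give $E_0(n)-E_1(n)=a_n$ and $E_1(n)-E_2(n)=\widehat d_{n,1}=d_{n-1,0}=a_n$, and the total $3^n$ then forces $E_1(n)=3^{n-1}$. The only difference is cosmetic: you obtain the total from $P_{\MP}\mathbf 1=3\mathbf 1$, whereas the paper reads it off the row-sum generating function $g(x)/(1-f(x))=1/(1-3x)$, which you also mention as a check.
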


\begin{proof}
Taking $k=0$ and $k=1$ in Corollary~\ref{cor:local-inverse} yields
\[
E_0(n) - E_1(n) = \widehat d_{n,0} = a_n,\quad E_1(n) - E_2(n) = \widehat d_{n,1} = d_{n-1,0} = a_n,
\]
where we used Lemma~\ref{prop:two-riordan-arrays} and the fact that $d_{n-1,0}=a_n$ for $n\ge 1$. Furthermore, the row sums of $D_{\MP}$ are given by $\frac{g(x)}{1-f(x)} = \frac{1}{1-3x}$, providing the total sum
\begin{equation*}
E_0(n) + E_1(n) + E_2(n) = 3^n.
\end{equation*}
Substituting $E_0(n) = E_1(n) + a_n$ and $E_2(n) = E_1(n) - a_n$ into this total gives $3E_1(n) = 3^n$, from which the exact formulas immediately follow.
\end{proof}
\begin{corollary}
\label{cor:period3-filter}
Let $u_0,u_1,u_2 \in \mathbb{C}$. For every $n\geq1$,
\begin{equation*}
\sum_{j=0}^{n}u_{j\bmod3}\,d_{n,j} = (u_0+u_1+u_2)3^{n-1} + (u_0-u_2)a_n.
\end{equation*}
\end{corollary}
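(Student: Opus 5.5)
The identity is linear in $(u_0,u_1,u_2)$, so I will expand the left-hand side by residue classes and then read off the answer from Theorem~\ref{thm:period3-distribution}. Since $u_{j\bmod 3}$ depends only on $j \bmod 3$, grouping the indices $0\le j\le n$ by their residue gives
\[
\sum_{j=0}^{n}u_{j\bmod3}\,d_{n,j}=u_0E_0(n)+u_1E_1(n)+u_2E_2(n),
\]
where $E_r(n)$ is exactly as defined before Theorem~\ref{thm:period3-distribution}. Recall also that $d_{n,j}=0$ for $j>n$, so restricting the sum to $j\le n$ loses nothing.

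Next, for $n\ge1$ I substitute the three formulas of Theorem~\ref{thm:period3-distribution}:
\[
E_0(n)=3^{n-1}+a_n,\qquad E_1(n)=3^{n-1},\qquad E_2(n)=3^{n-1}-a_n .
\]
Collecting terms, the $3^{n-1}$ parts contribute $(u_0+u_1+u_2)3^{n-1}$. The $a_n$ parts contribute $u_0a_n-u_2a_n=(u_0-u_2)a_n$. This is precisely the stated right-hand side.

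There is no real obstacle once Theorem~\ref{thm:period3-distribution} is available. The only point needing care is the range $n\ge1$, which the corollary inherits from the theorem. At $n=0$ the formula would involve $3^{-1}$ and actually fails: the left side is $u_0$, while the right side is $(u_0+u_1+u_2)/3+u_0-u_2$. That is why the hypothesis $n\ge1$ is kept.

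As a sanity check, take $n=1$. Row~$1$ of $D_{\MP}$ is $(2,1)$, so the left side is $2u_0+u_1$. The right side is $(u_0+u_1+u_2)+(u_0-u_2)a_1=2u_0+u_1$, using $a_1=1$, so the two agree.
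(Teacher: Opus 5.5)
Your proof is correct and is the intended argument: grouping indices by residue gives $u_0E_0(n)+u_1E_1(n)+u_2E_2(n)$, and substituting the three formulas of Theorem~\ref{thm:period3-distribution} yields the right-hand side. The paper states this corollary without a separate proof, as an immediate consequence of that theorem, and your observations that the formula fails at $n=0$ and checks at $n=1$ are also accurate.
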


\begin{corollary}
\label{cor:root-unity}
Let $\omega$ be a primitive third root of unity(\(\omega=e^{2\pi i/3}.\))  For every $n\geq1$,
\begin{equation*}
\sum_{j=0}^{n}d_{n,j}\omega^j = (1-\omega^2)a_n \qquad\text{and}\quad \sum_{j=0}^{n}d_{n,j}\omega^{2j} = (1-\omega)a_n.
\end{equation*}
\end{corollary}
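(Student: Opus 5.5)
The plan is to reduce both sums to the three mod-$3$ endpoint totals of Theorem~\ref{thm:period3-distribution}, that is, to Corollary~\ref{cor:period3-filter} with a special choice of weights. Since $\omega^3=1$, the weight $\omega^j$ depends only on $j\bmod 3$. So $\sum_{j=0}^n d_{n,j}\omega^j=\sum_{j} u_{j\bmod 3}d_{n,j}$ with $(u_0,u_1,u_2)=(1,\omega,\omega^2)$. Likewise, $\sum_j d_{n,j}\omega^{2j}$ corresponds to $(u_0,u_1,u_2)=(1,\omega^2,\omega^4)=(1,\omega^2,\omega)$.

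Next I would apply Corollary~\ref{cor:period3-filter}, valid for $n\geq1$. In both cases $u_0+u_1+u_2=1+\omega+\omega^2=0$, because $\omega$ is a primitive cube root of unity. Hence the $3^{n-1}$ term disappears, and only $(u_0-u_2)a_n$ remains. For the first sum, $u_0-u_2=1-\omega^2$, giving $(1-\omega^2)a_n$. For the second sum, $u_0-u_2=1-\omega$, giving $(1-\omega)a_n$.

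If one prefers not to rely on Corollary~\ref{cor:period3-filter}, the same computation follows directly from Theorem~\ref{thm:period3-distribution}:
\[
\sum_j d_{n,j}\omega^j=E_0(n)+\omega E_1(n)+\omega^2E_2(n)=3^{n-1}(1+\omega+\omega^2)+a_n(1-\omega^2).
\]
The case $\omega^{2j}$ is the same computation with $\omega$ and $\omega^2$ swapped in the coefficients of $E_1$ and $E_2$.

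There is no real obstacle here. The only point needing care is matching $u_1,u_2$ correctly for the second sum, where $\omega^{2\cdot 2}=\omega$. As a sanity check, the second identity is the complex conjugate of the first, since $d_{n,j}$ and $a_n$ are real and $\overline{\omega}=\omega^2$.
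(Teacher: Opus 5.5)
Your proposal is correct and is exactly the paper's argument: apply Corollary~\ref{cor:period3-filter} with $(u_0,u_1,u_2)=(1,\omega,\omega^2)$ and $(1,\omega^2,\omega)$, then use $1+\omega+\omega^2=0$ to eliminate the $3^{n-1}$ term. Your direct expansion through $E_0(n),E_1(n),E_2(n)$ and the complex-conjugation check are harmless extras.
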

\begin{proof}
Apply Corollary~\ref{cor:period3-filter} with $(u_0,u_1,u_2)=(1,\omega,\omega^2)$ and $(1,\omega^2,\omega)$, respectively, using $1+\omega+\omega^2=0$.
\end{proof}
\begin{corollary}\label{cor:row-sums-hatD}
For $n \geq 1$, the row sums of $\widehat D$ satisfy
$
\sum_{k=0}^{n}\widehat{d}_{n,k} = a_n + 3^{n-1} = E_0(n),
$
with $\sum_{k=0}^{0}\widehat{d}_{0,k} = 1$.
\end{corollary}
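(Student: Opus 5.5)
The plan is to compute the row sums of $\widehat D$ directly from its Riordan description $\widehat D=(A,f)$ and then match the result with Theorem~\ref{thm:period3-distribution}. The row-sum generating function of any Riordan array $(G,F)$ is $G(x)/(1-F(x))$. Since $A(x)=1/(1-f(x))$ by Howroyd's identity, the row sums of $\widehat D$ have generating function $A(x)^2$. A more convenient route is the boundary factorization of Theorem~\ref{thm:boundary-factorization}. Multiplying $\widehat D=D_{\MP}R$ on the right by the all-ones column vector $\mathbf 1$ gives
\[
\widehat D\mathbf 1=D_{\MP}(R\mathbf 1),
\]
so the $n$th row sum equals $\sum_{j}d_{n,j}\rho_j$, where $\rho_j=\sum_{k\le j}r_{j-k}$ is the $j$th row sum of $R$.

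The key step is to compute $\rho_j$. Its generating function is $\frac{1}{(1+x+x^2)(1-x)}=\frac{1}{1-x^3}$, so $\rho_j=1$ when $j\equiv0\pmod 3$ and $\rho_j=0$ otherwise. The $n$th row sum of $\widehat D$ is therefore
\[
\sum_{j\equiv0\ (3)}d_{n,j}=E_0(n).
\]
The truncation at $j\le n$ causes no trouble, because all the matrices involved are lower triangular.

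For $n\ge1$, Theorem~\ref{thm:period3-distribution} gives $E_0(n)=3^{n-1}+a_n$, which is the claimed identity. The case $n=0$ is just $\widehat d_{0,0}=A(0)=1$. Equivalently, one can apply Corollary~\ref{cor:period3-filter} with $(u_0,u_1,u_2)=(1,0,0)$.

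The only point that needs care is justifying $\widehat D\mathbf 1=D_{\MP}(R\mathbf 1)$ for infinite matrices. This holds because every row of $D_{\MP}$ has finitely many nonzero entries, so each product is a finite sum and associativity is immediate. As a cross-check, one could also verify $[x^n]A(x)^2=a_n+3^{n-1}$ from the closed form: we have $A^2=A-\tfrac14+\tfrac14\cdot\frac{1+x}{1-3x}$, and extracting the coefficient of $x^n$ for $n\ge1$ gives $a_n+\tfrac14(3^n+3^{n-1})=a_n+3^{n-1}$. I expect no real obstacle beyond this bookkeeping.
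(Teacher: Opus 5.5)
Your proof is correct, but it runs in the opposite direction from the paper's.

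The paper works entirely with generating functions. The row sums of $\widehat D=(A,f)$ have generating function $A(x)/(1-f(x))=A(x)^2$, and the relation $A(x)^2=A(x)+\frac{x}{1-3x}$ (exactly your cross-check) gives $a_n+3^{n-1}$ at once. The paper then simply imports the equality with $E_0(n)$ from Theorem~\ref{thm:period3-distribution}.

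You do the reverse. You obtain $\sum_k\widehat d_{n,k}=E_0(n)$ structurally from $\widehat D=D_{\MP}R$, using that $R\mathbf 1$ is the indicator of $3\mid j$ since $\frac{1}{(1+x+x^2)(1-x)}=\frac{1}{1-x^3}$. You then import the closed form from Theorem~\ref{thm:period3-distribution}.

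What each route buys:
\begin{itemize}
\item Your route explains why the row sums of $\widehat D$ are precisely the residue-class total $E_0(n)$: the boundary factor $R$ acts on row sums as a period-three filter. It also holds for every $n\geq0$, including $n=0$, without appealing to the theorem.
\item The paper's route is shorter and gets the numerical value from the closed form of $A$ alone.
\end{itemize}

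Combining the two computations gives something neither states. Your identification of the row sum with $E_0(n)$, together with the coefficient extraction from $A(x)^2$, is an independent proof of $E_0(n)=3^{n-1}+a_n$. It bypasses the $E_0-E_1$, $E_1-E_2$ argument used in Theorem~\ref{thm:period3-distribution}.

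Your justification of $\widehat D\mathbf 1=D_{\MP}(R\mathbf 1)$ by lower triangularity is adequate: every double sum runs only over $0\le k\le j\le n$.
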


\begin{proof}
The row-sum generating function of $\widehat D = (A,f)$ is given by
\[
\frac{A(x)}{1-f(x)} = A(x)^2 = A(x) + \frac{x}{1-3x}.
\]
Comparing the coefficients of $x^n$ for $n \geq 1$ yields $\sum_{k=0}^{n}\widehat{d}_{n,k} = a_n + 3^{n-1}$. The relation $\sum_{k=0}^{n}\widehat{d}_{n,k} = E_0(n)$ then follows directly from Theorem~\ref{thm:period3-distribution}.
\end{proof}

\begin{theorem}
\label{thm:explicit-c}
For $n\geq1$ and $0\leq k\leq n$, $\widehat d_{n,k}$ given by
\begin{equation*}
\frac{1}{n} \sum_{m=k}^{n} m\,[t^{n-m}](1+t+t^2)^n = \frac{1}{n} \sum_{m=k}^{n} m \sum_{j=0}^{\lfloor (n-m)/2\rfloor} \binom{n}{j,\ n-m-2j,\ m+j}.
\end{equation*}
\end{theorem}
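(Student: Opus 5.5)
The plan is to write $\widehat d_{n,k}=[x^n]\,A(x)f(x)^k$ as a sum of the quantities $[x^n]f(x)^m$, and then evaluate each of these by Lagrange inversion.

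\textbf{Step 1: reduce to powers of $f$.} By Howroyd's identity recorded after Theorem~\ref{thm:GF}, $A(x)=1/(1-f(x))$. Hence
\[
A(x)f(x)^k=\frac{f(x)^k}{1-f(x)}=\sum_{m\geq k}f(x)^m .
\]
This is a legitimate identity of formal power series, since $f(0)=0$. Because $f(x)=x+O(x^2)$, the series $f^m$ has order $m$. Therefore $[x^n]f^m=0$ for $m>n$, and
\[
\widehat d_{n,k}=\sum_{m=k}^{n}[x^n]f(x)^m .
\]

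\textbf{Step 2: Lagrange inversion.} The functional equation $f=x(1+f+f^2)$ has the form $f=x\varphi(f)$ with $\varphi(t)=1+t+t^2$ and $\varphi(0)=1\neq0$. The Lagrange inversion formula then gives, for $n\geq1$ and $m\geq1$,
\[
[x^n]f(x)^m=\frac{m}{n}\,[t^{n-m}]\varphi(t)^n=\frac{m}{n}\,[t^{n-m}](1+t+t^2)^n .
\]
For $m=0$ (which only arises when $k=0$) we have $[x^n]f^0=0$ because $n\geq1$. The factor $m$ makes the right-hand side vanish at $m=0$ as well, so the formula holds for every $m$ in the range $k\leq m\leq n$. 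Summing over $m$ yields the first expression in the statement. This is exactly where the hypothesis $n\geq1$ is needed. The only point requiring care is this bookkeeping, namely the $m=0$ term and the truncation at $m=n$; the rest is routine.

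\textbf{Step 3: expand the coefficient.} Expand $(1+t+t^2)^n$ by the multinomial theorem. Choosing $j$ factors equal to $t^2$, $i$ factors equal to $t$ and the remaining $n-i-j$ factors equal to $1$ contributes $t^{i+2j}$ with multiplicity $\binom{n}{j,\,i,\,n-i-j}$. Requiring $i+2j=n-m$ forces $i=n-m-2j\geq0$, so $0\leq j\leq\lfloor (n-m)/2\rfloor$. The remaining count is then $n-i-j=m+j$. Substituting gives
\[
[t^{n-m}](1+t+t^2)^n=\sum_{j=0}^{\lfloor (n-m)/2\rfloor}\binom{n}{j,\ n-m-2j,\ m+j},
\]
which is the second expression in the statement.

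\textbf{Sanity check.} As a final step I would verify a small case against the table in Theorem~\ref{thm:riordan}. For instance, with $n=2$ and $k=0$ the formula gives $\tfrac12\bigl(1\cdot 2+2\cdot 1\bigr)=2=\widehat d_{2,0}$.
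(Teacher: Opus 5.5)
Your proposal is correct and follows the paper's route. You expand $Af^k=f^k/(1-f)=\sum_{m\ge k}f^m$, truncate at $m=n$, apply Lagrange inversion to $f=x(1+f+f^2)$, and expand $(1+t+t^2)^n$ by the multinomial theorem. You are in fact more careful than the paper: it leaves the use of $A=1/(1-f)$ implicit and states the Lagrange formula only for $1\le m\le n$, whereas you check that the $m=0$ term arising when $k=0$ vanishes on both sides.
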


\begin{proof}
Because $f(x)=x+O(x^2)$, we have $\widehat d_{n,k} = \sum_{m=k}^{n}[x^n]f(x)^m$. Applying Lagrange inversion to $f(x)=x(1+f(x)+f(x)^2)$ gives
\begin{equation*}
[x^n]f(x)^m = \frac{m}{n}[t^{n-m}](1+t+t^2)^n \qquad (1\le m\le n).
\end{equation*}
Expanding $(1+t+t^2)^n$ yields $[t^{n-m}](1+t+t^2)^n = \sum_{j=0}^{\lfloor (n-m)/2\rfloor} \binom{n}{j,\ n-m-2j,\ m+j}$. Summing over $m$ proves both identities.
\end{proof}
For a proper Riordan array $D = (g(x), f(x))$, its inverse is $D^{-1} = \left(1/g(\bar f(x)), \bar f(x)\right)$, where $\bar f$ is the compositional inverse of $f$.
\begin{proposition}
\label{prop:inverse-riordan}
The inverse Riordan arrays are
\begin{equation*}
D_{\MP}^{-1} = \left( \frac{1-x}{1+x+x^2}, \frac{x}{1+x+x^2} \right)
\quad\text{and}\quad
\widehat D^{-1} = \left( 1-x, \frac{x}{1+x+x^2} \right).
\end{equation*}

\end{proposition}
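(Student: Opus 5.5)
We apply the inverse formula $D^{-1}=\left(1/g(\bar f(x)),\bar f(x)\right)$ quoted just before the statement, so we need two ingredients: the compositional inverse $\bar f$ and the values $g(\bar f(x))$ and $A(\bar f(x))$.

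First, find $\bar f$. The functional equation $f(x)=x\bigl(1+f(x)+f(x)^2\bigr)$ says that $y=f(x)$ satisfies $x=y/(1+y+y^2)$. Hence
\[
\bar f(x)=\frac{x}{1+x+x^2}.
\]
This series has zero constant term and linear coefficient $1$, so it is the genuine compositional inverse.

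Next, compute $g(\bar f(x))$. The cleanest route is the $\Z$-sequence identity $g=\dfrac{1}{1-x(2+f)}$ established in the Proposition on $\A$- and $\Z$-sequences. Substituting $x\mapsto\bar f(x)$ and using $f(\bar f(x))=x$ gives
\[
g(\bar f(x))=\frac{1}{1-\bar f(x)(2+x)}
=\frac{1+x+x^2}{1+x+x^2-2x-x^2}=\frac{1+x+x^2}{1-x}.
\]
Therefore $1/g(\bar f(x))=\dfrac{1-x}{1+x+x^2}$, which gives $D_{\MP}^{-1}$.

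For $\widehat D$, use $A=1/(1-f)$ (Howroyd's identity). Then $A(\bar f(x))=1/(1-x)$, so $1/A(\bar f(x))=1-x$. This gives $\widehat D^{-1}$.

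As a consistency check, we can use Theorem~\ref{thm:boundary-factorization}: $\widehat D^{-1}=R^{-1}D_{\MP}^{-1}=(1+x+x^2,x)\left(\frac{1-x}{1+x+x^2},\frac{x}{1+x+x^2}\right)=(1-x,\bar f)$, by the Riordan multiplication rule. This agrees with the direct computation.

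The only delicate point is justifying the substitution $x\mapsto\bar f(x)$ in these identities. All of them are identities of formal power series with zero constant term in the inner argument, so composition is legitimate. It is also cleaner to avoid the closed form with square roots, where branch issues could otherwise arise.
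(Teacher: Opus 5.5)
Your proof is correct and matches the paper's argument for $D_{\MP}^{-1}$ exactly: you use the compositional inverse $\bar f(x)=x/(1+x+x^2)$ together with $g=1/(1-x(2+f))$. For $\widehat D^{-1}$ the paper uses what you list as your consistency check, $\widehat D^{-1}=R^{-1}D_{\MP}^{-1}$; your primary route via $A=1/(1-f)$, giving $1/A(\bar f(x))=1-x$, is an equally valid alternative that does not rely on the boundary factorization.
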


\begin{proof}
From $f = x(1+f+f^2)$, its compositional inverse is $\bar f(x) = \frac{x}{1+x+x^2}$. For $D_{\MP}=(g,f)$ with $g(x)=\frac{1}{1-2x-xf(x)}$, using $f(\bar f(x))=x$ yields
\begin{equation*}
\frac{1}{g(\bar f(x))} = 1 - 2\bar f(x) - x\bar f(x) = \frac{1-x}{1+x+x^2},
\end{equation*}
establishing $D_{\MP}^{-1}$. Next, using the boundary factorization $\widehat D = D_{\MP}R$ with $R = \left(\frac{1}{1+x+x^2}, x\right)$ and $R^{-1} = (1+x+x^2, x)$, Riordan multiplication gives
\begin{equation*}
\widehat D^{-1} = R^{-1} D_{\MP}^{-1} = (1+x+x^2, x) \left( \frac{1-x}{1+x+x^2}, \frac{x}{1+x+x^2} \right) = \left( 1-x, \frac{x}{1+x+x^2} \right). \qedhere
\end{equation*}
\end{proof}
Using Proposition~\ref{prop:inverse-riordan}, we have
\begin{align*}
D_{\MP}^{-1} = 
\begin{pmatrix}
1 & 0 & 0 & 0 & 0 & \cdots \\
-2 & 1 & 0 & 0 & 0 & \cdots \\
1 & -3 & 1 & 0 & 0 & \cdots \\
1 & 3 & -4 & 1 & 0 & \cdots \\
-2 & 1 & 6 & -5 & 1 & \cdots \\
\vdots & \vdots & \vdots & \vdots & \ddots & \ddots
\end{pmatrix}, \quad
\widehat D^{-1} = 
\begin{pmatrix}
1 & 0 & 0 & 0 & 0 & \cdots \\
-1 & 1 & 0 & 0 & 0 & \cdots \\
0 & -2 & 1 & 0 & 0 & \cdots \\
0 & 1 & -3 & 1 & 0 & \cdots \\
0 & 1 & 3 & -4 & 1 & \cdots \\
\vdots & \vdots & \vdots & \vdots & \ddots & \ddots
\end{pmatrix}.
\end{align*}
\begin{corollary}
\label{cor:first-column-inverse-MP}
The first column of $D_{\MP}^{-1}$ has generating function $\frac{1-x}{1+x+x^2}$, giving the period-3 sequence
\begin{equation*}
(D_{\MP}^{-1})_{n,0} =
\begin{cases}
1, & n\equiv 0 \pmod 3, \\
-2, & n\equiv 1 \pmod 3, \\
1, & n\equiv 2 \pmod 3.
\end{cases}
\end{equation*}
\end{corollary}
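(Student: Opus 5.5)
The first column of a Riordan array $(h(x),\phi(x))$ has generating function $h(x)$. So by Proposition~\ref{prop:inverse-riordan} it suffices to expand $\frac{1-x}{1+x+x^2}$ as a power series and read off its coefficients. I would use the factorization $1-x^3=(1-x)(1+x+x^2)$. Multiplying numerator and denominator by $1-x$ gives
\[
\frac{1-x}{1+x+x^2}=\frac{(1-x)^2}{1-x^3}=\frac{1-2x+x^2}{1-x^3}.
\]

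Next I would expand $\frac{1}{1-x^3}=\sum_{m\geq0}x^{3m}$ and multiply by $1-2x+x^2$. The three terms $1$, $-2x$ and $x^2$ land exactly on the residue classes $0$, $1$ and $2$ modulo $3$, with no overlap. Hence $[x^{3m}]=1$, $[x^{3m+1}]=-2$ and $[x^{3m+2}]=1$, which is the claimed period-3 pattern.

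As a consistency check, I would compare with the displayed matrix $D_{\MP}^{-1}$, whose first column begins $1,-2,1,1,-2,\ldots$. Alternatively, one could multiply the series $\frac{1}{1+x+x^2}=1-x+x^3-x^4+\cdots$ from Corollary~\ref{cor:local-inverse} by $1-x$, which gives the same coefficients.

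There is no real obstacle here. The only step needing care is invoking the fact that the zeroth column of $(h,\phi)$ has generating function $h\cdot\phi^0=h$. This is immediate from the definition $d_{n,k}=[x^n]h(x)\phi(x)^k$ at $k=0$.
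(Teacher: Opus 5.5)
Your proposal is correct and matches the paper's proof: you take the first component $\frac{1-x}{1+x+x^2}$ of $D_{\MP}^{-1}$ from Proposition~\ref{prop:inverse-riordan}, rewrite it as $\frac{(1-x)^2}{1-x^3}=(1-2x+x^2)\sum_{m\ge0}x^{3m}$, and read off the coefficients by residue class modulo $3$. Your explicit justification that the zeroth column of $(h,\phi)$ has generating function $h$, and your cross-check via $\frac{1}{1+x+x^2}=1-x+x^3-x^4+\cdots$, are consistent with this argument.
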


\begin{proof}
The first column of $D_{\MP}^{-1} = \left( \frac{1-x}{1+x+x^2}, \frac{x}{1+x+x^2} \right)$ has generating function
\begin{equation*}
\frac{1-x}{1+x+x^2} = \frac{(1-x)^2}{1-x^3} = (1-2x+x^2)\sum_{m\ge 0} x^{3m},
\end{equation*}
from which extracting the coefficient of $x^n$ completes the proof. \qedhere
\end{proof}
Since $P_{\MP}$ is symmetric tridiagonal with diagonal parameters $\alpha_0=2$, $\alpha_n=1$ ($n\ge 1$) and subdiagonal parameters $\beta_n=1$ ($n\ge 1$), the row polynomials $Q_n(t) = \sum_{k=0}^{n} (D_{\MP}^{-1})_{n,k} t^k$ form a monic orthogonal polynomial sequence defined by $Q_0(t)=1$, $Q_1(t)=t-2$, and
\begin{equation*}
Q_{n+1}(t) = (t-1)Q_n(t) - Q_{n-1}(t) \qquad (n\ge 1).
\end{equation*}

\begin{remark}
\label{rem:MP-jfraction}
These parameters yield the Jacobi continued fraction expansion for $g(x) = \sum_{n\ge 0} \MP_n x^n$:
\begin{equation*}
g(x) = \cfrac{1}{1-2x-\cfrac{x^2}{1-x-\cfrac{x^2}{1-x-\ddots}}}.
\end{equation*}
Consequently, the standard J-fraction determinant formula gives
\begin{equation*}
\det_{0\le i,j < n} (\MP_{i+j}) = \MP_0^n \prod_{j=1}^{n-1} \beta_j^{n-j} = 1 \qquad (n\ge 1),
\end{equation*}
which is another proof of results in Corollary ~\ref{cor:MP-hankel}.
\end{remark}

\begin{corollary}
\label{cor:cheb}
For $n\ge 1$, $Q_n(t) = U_n\left(\frac{t-1}{2}\right) - U_{n-1}\left(\frac{t-1}{2}\right)$, where $U_n$ denotes the Chebyshev polynomial of the second kind.
\end{corollary}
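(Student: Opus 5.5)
We prove the claim $Q_n(t)=U_n\bigl(\frac{t-1}{2}\bigr)-U_{n-1}\bigl(\frac{t-1}{2}\bigr)$ by induction on $n$, using the three-term recurrence for the $Q_n$ stated just before Remark~\ref{rem:MP-jfraction}. Put $s=\frac{t-1}{2}$, so that $t-1=2s$. Define $V_n(t):=U_n(s)-U_{n-1}(s)$ for $n\ge 1$. It suffices to show that $V_n$ satisfies the same recurrence $V_{n+1}=(t-1)V_n-V_{n-1}$ and agrees with $Q_n$ in two consecutive initial indices.

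First, the recurrence. The Chebyshev polynomials satisfy $U_{n+1}(s)=2sU_n(s)-U_{n-1}(s)$ for $n\ge 0$, with $U_{-1}=0$. Subtracting the instance at index $n$ from the instance at index $n+1$ gives, for $n\ge 1$,
\[
V_{n+1}=2s\,V_n-V_{n-1}=(t-1)V_n-V_{n-1}.
\]
Here we use the convention $U_{-1}=0$, so that $V_0:=U_0-U_{-1}=1=Q_0$. This extends the recurrence down to $n=1$.

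Second, the initial values. We have $V_0=1=Q_0$. Next,
\[
V_1=U_1(s)-U_0(s)=2s-1=t-2=Q_1 .
\]
Since $V_n$ and $Q_n$ satisfy the same second-order recurrence for $n\ge1$ and agree at $n=0,1$, induction gives $Q_n=V_n$ for all $n\ge 0$, and in particular for $n\ge1$ as claimed.

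The step that needs care is not the algebra but the justification of the recurrence $Q_{n+1}=(t-1)Q_n-Q_{n-1}$ for the row polynomials of $D_{\MP}^{-1}$. The paper asserts it from the standard fact that a Riordan array with tridiagonal production matrix has inverse rows forming the monic orthogonal polynomials with those Jacobi parameters. If a direct check is wanted, use $D_{\MP}^{-1}=\bigl(\frac{1-x}{1+x+x^2},\frac{x}{1+x+x^2}\bigr)$ from Proposition~\ref{prop:inverse-riordan}. Its bivariate generating function is
\[
\sum_n Q_n(t)x^n=\frac{1-x}{1+x+x^2-tx}=\frac{1-x}{1-(t-1)x+x^2}.
\]
The denominator immediately yields the recurrence together with $Q_0=1$ and $Q_1=t-2$. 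Alternatively, since $\sum_n U_n(s)x^n=1/(1-2sx+x^2)$, this generating function identity proves the corollary in one line: multiplying by $1-x$ gives exactly $\sum_n (U_n-U_{n-1})x^n$.
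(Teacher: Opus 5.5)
Your proof is correct and is essentially the argument the paper intends. The paper gives no separate proof; it presents the corollary as an immediate consequence of the recurrence $Q_{n+1}(t)=(t-1)Q_n(t)-Q_{n-1}(t)$ with $Q_0=1$, $Q_1=t-2$, and under $t-1=2s$ this is matched against the Chebyshev recurrence exactly as you do. Your additional check via $\sum_{n\ge0}Q_n(t)x^n=\frac{1-x}{1-(t-1)x+x^2}$, taken from Proposition~\ref{prop:inverse-riordan}, is a worthwhile self-contained step: it verifies the recurrence, and in fact the whole identity, directly rather than relying on the general fact about tridiagonal production matrices that the paper invokes.
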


\begin{remark}
\label{rem:C-not-orthogonal}
Because $P_{\widehat D}$ is not tridiagonal, $\widehat D^{-1}$ does not form an orthogonal polynomial sequence.
\end{remark}
\subsection{A unified factorization of shifted Hankel matrices}
\label{sec:unified-hankel-factorization}
Let $\widehat D = \{a_n\}_{n\geq0}$ be sequence \href{https://oeis.org/A005773}{OEIS A005773}, and let $H^{(r)} := (a_{i+j+r})_{i,j\geq0}$ denote its $r$-th shifted Hankel matrix. Put $T := P - e_0 e_0^T$, where $e_0 = (1,0,0,\ldots)^T$ and $P = P_{\MP} = P^T$ be the symmetric production matrix of $D_{\MP}$, means
\begin{equation*}
P = \begin{pmatrix} 2 & 1 & 0 & \cdots \\ 1 & 1 & 1 & \cdots \\ 0 & 1 & 1 & \ddots \\ \vdots & \vdots & \ddots & \ddots \end{pmatrix}, 
\qquad
T = \begin{pmatrix} 1 & 1 & 0 & \cdots \\ 1 & 1 & 1 & \cdots \\ 0 & 1 & 1 & \ddots \\ \vdots & \vdots & \ddots & \ddots \end{pmatrix}.
\end{equation*}
Define the \textit{the block-diagonal matrix} $\oplus$ as  $1\oplus T=\begin{pmatrix}1&0\\0&T\end{pmatrix},$
that is, the scalar $1$ occupies the $(0,0)$ position and $T$ occupies
the remaining rows and columns.  Accordingly, $(1\oplus T)_n=1\oplus T_{n-1}.$
\begin{theorem}
\label{thm:unified-factorization}
Let $H^{(r)} = (a_{i+j+r})_{i,j\geq0}$ ($r\geq0$) be the shifted Hankel matrices of sequence $\widehat D=\{a_n\}_{n\geq0}$. Let $P=P_{\MP}$ denote the symmetric production matrix of $D_{\MP}$, $e_0=(1,0,0,\ldots)^T$, and $T=P-e_0e_0^T$. Then
\begin{equation*}
H^{(0)} = \widehat D(1\oplus T)\widehat D^{\,T}
\qquad\text{and}\qquad
H^{(r)} = D_{\MP}P^{\,r-1}D_{\MP}^{\,T} \quad (r\geq1).
\end{equation*}
\end{theorem}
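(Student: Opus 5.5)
The plan is to reduce both identities to the moment representation $\MP_n=e_0^TP^ne_0$ from Proposition~\ref{prop:MP-moment} and the row formula $r_n=e_0^TP^n$ for the rows of $D_{\MP}$. The $r\ge1$ case is the model computation. The $r=0$ case will be handled through the block structure of $\widehat D$ recorded after Theorem~\ref{thm:production}.

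\textbf{The case $r\ge1$.} Since $a_{m+1}=\MP_m$, the $(i,j)$ entry of $H^{(r)}$ is $a_{i+j+r}=\MP_{i+j+r-1}$. On the other side,
\[
(D_{\MP}P^{r-1}D_{\MP}^T)_{i,j}=r_iP^{r-1}r_j^T=e_0^TP^iP^{r-1}(P^j)^Te_0 .
\]
Because $P=P^T$, this equals $e_0^TP^{i+j+r-1}e_0=\MP_{i+j+r-1}$ by Proposition~\ref{prop:MP-moment}. This is the argument of Theorem~\ref{thm:MP-Cholesky} with $P^{r-1}$ inserted in the middle. All products are finite sums, since the rows of $D_{\MP}$ are finitely supported, so associativity of the infinite products is harmless.

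\textbf{The case $r=0$.} I would write $\widehat D=\begin{pmatrix}1&0\\ \mathbf m& D_{\MP}\end{pmatrix}$ and multiply the blocks. The row $0$ of $\widehat D$ is $e_0^T$, and row $i\ge1$ is $(\MP_{i-1},\, r_{i-1})$. The block product gives
\[
\bigl(\widehat D(1\oplus T)\widehat D^T\bigr)_{i,j}=\MP_{i-1}\MP_{j-1}+r_{i-1}Tr_{j-1}^T \qquad (i,j\ge1),
\]
with the convention that the terms involving index $-1$ are replaced by the $(0,0)$ data. For $i=0$ the entry is $1\cdot 1=1=a_0$ when $j=0$, and $\MP_{j-1}=a_j$ when $j\ge1$. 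For $i,j\ge1$ I would substitute $T=P-e_0e_0^T$. This gives
\[
r_{i-1}Pr_{j-1}^T-(r_{i-1}e_0)(e_0^Tr_{j-1}^T)=\MP_{i+j-1}-\MP_{i-1}\MP_{j-1},
\]
using $r_{i-1}e_0=d_{i-1,0}=\MP_{i-1}$ and the $r=2$ computation above. Adding back $\MP_{i-1}\MP_{j-1}$ leaves $\MP_{i+j-1}=a_{i+j}$, which is the required entry.

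The main obstacle is bookkeeping in the $r=0$ case. One must get the block multiplication right, in particular the cross terms between the first column $\mathbf m$ and the scalar $1$ of $1\oplus T$. One must also confirm that the rank-one correction $-e_0e_0^T$ in $T$ exactly cancels the $\mathbf m\mathbf m^T$ contribution. I would first verify this on the $2\times2$ leading block, $a_0=1$, $a_1=1$, $a_2=2$, and only then write the general entrywise identity.
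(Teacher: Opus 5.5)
Your proposal is correct and follows essentially the same route as the paper. The $r\ge1$ case uses $r_i=e_0^TP^i$, the symmetry of $P$ and Proposition~\ref{prop:MP-moment}. The $r=0$ case uses the row decomposition $\widehat r_n=(\MP_{n-1},r_{n-1})$, in which the $-e_0e_0^T$ in $T$ cancels the $\MP_{i-1}\MP_{j-1}$ cross term and leaves $r_{i-1}Pr_{j-1}^T=\MP_{i+j-1}=a_{i+j}$, with the boundary row and column handled via $\widehat r_0=e_0^T$.
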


\begin{proof}
Let $r_n = e_0^T P^n$ denote the $n$th row of $D_{\MP}$. Since $P=P^T$, Proposition~\ref{prop:MP-moment} gives $e_0^T P^m e_0 = \MP_m$. 
For $r\geq1$, the $(i,j)$ entry of $D_{\MP}P^{r-1}D_{\MP}^T$ is
\begin{equation*}
r_i P^{r-1} r_j^T = e_0^T P^i P^{r-1} (P^j)^T e_0 = e_0^T P^{i+j+r-1} e_0 = \MP_{i+j+r-1} = a_{i+j+r},
\end{equation*}
where $a_n = \MP_{n-1}$ for $n\geq1$. Thus, $H^{(r)} = D_{\MP}P^{r-1}D_{\MP}^T$.
For $r=0$, let $\widehat r_n$ denote the $n$th row of $\widehat D$. The identity $xg(x) = A(x)f(x)$ yields $\widehat d_{n,0} = a_n = d_{n-1,0}$ and $\widehat d_{n,k} = d_{n-1,k-1}$ for $n,k\geq1$, so $\widehat r_n = \bigl( (r_{n-1})_0, r_{n-1} \bigr)$ for $n\geq1$. Since $1\oplus T = \begin{pmatrix} 1 & 0 \\ 0 & T \end{pmatrix}$ and $T = P - e_0e_0^T$, for $i,j\geq1$ we obtain
\begin{align*}
\widehat r_i(1\oplus T)\widehat r_j^T &= (r_{i-1})_0(r_{j-1})_0 + r_{i-1}Tr_{j-1}^T\\
& = r_{i-1}Pr_{j-1}^T \\
&= e_0^T P^{i+j-1} e_0 \\
&= \MP_{i+j-1} = a_{i+j}.
\end{align*}
When $i=0$ or $j=0$, as $\widehat r_0 = e_0^T$, we directly get $\widehat r_0(1\oplus T)\widehat r_j^T = \widehat d_{j,0} = a_j = a_{0+j}$ and $\widehat r_0(1\oplus T)\widehat r_0^T = 1 = a_0$. Therefore, $(\widehat D(1\oplus T)\widehat D^T)_{i,j} = a_{i+j}$ for all $i,j\geq0$, establishing $H^{(0)} = \widehat D(1\oplus T)\widehat D^T$. \qedhere
\end{proof}
\begin{remark}
For $r\geq1$, $H^{(r)}$ is generated by powers of $P$. The case $r=0$ is exceptional, inserting the boundary matrix $1\oplus T$ to record the correction distinguishing $\widehat D$ from the shifted Motzkin-prefix sequence.
\end{remark}
Let $\Delta_n^{(r)} := \det(a_{i+j+r})_{0\leq i,j<n}$ with $\Delta_0^{(r)}=1$. Let $P_n$ denote the leading $n\times n$ principal submatrix of
the infinite matrix $P$.
\begin{corollary}
\label{cor:unified-hankel-determinants}
For every $n\geq1$,
\begin{equation*}
\Delta_n^{(0)} = \det T_{n-1}
\quad\text{and}\quad
\Delta_n^{(r)} = \det\left((P^{r-1})_n\right) \quad (r\geq1).
\end{equation*}
Here $(P^s)_n$ denotes the leading $n\times n$ principal
submatrix of the infinite matrix $P^s$.
\end{corollary}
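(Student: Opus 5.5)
The plan is to truncate the factorizations of Theorem~\ref{thm:unified-factorization} to the leading $n\times n$ corner and take determinants. The point to exploit is that all the arrays involved, $\widehat D$ and $D_{\MP}$, are lower triangular with unit diagonal. So for any infinite matrix $X$, the leading $n\times n$ block of $LXL^T$, with $L$ unitriangular, depends only on the leading $n\times n$ blocks of $L$ and $X$:
\[
(LXL^T)_n=L_nX_nL_n^T .
\]
The $(i,j)$ entry with $i,j<n$ is $\sum_{k\le i,\ l\le j}L_{i,k}X_{k,l}L_{j,l}$, and only indices $k,l<n$ occur. Since $\det L_n=1$, we get $\det (LXL^T)_n=\det X_n$. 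A preliminary check is that all the infinite sums are finite: rows of lower-triangular matrices have finite support, so the products are well defined.

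For $r\ge1$, I would take $L=D_{\MP}$ and $X=P^{r-1}$ in $H^{(r)}=D_{\MP}P^{r-1}D_{\MP}^T$. Since $H^{(r)}_n=(a_{i+j+r})_{0\le i,j<n}$, this gives
\[
\Delta_n^{(r)}=\det\bigl((P^{r-1})_n\bigr).
\]
Here $P^{r-1}$ is a power of a banded matrix, so its entries are finite. Note that $(P^{r-1})_n$ is the truncation of the power, not the power of the truncation; this is exactly why the statement is phrased with $(P^s)_n$. For $r=1$ this recovers Corollary~\ref{cor:MP-hankel}.

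For $r=0$, I would take $L=\widehat D$ and $X=1\oplus T$ in $H^{(0)}=\widehat D(1\oplus T)\widehat D^T$. The leading $n\times n$ block of $1\oplus T$ is $1\oplus T_{n-1}$, as the paper notes. Its determinant is therefore $\det T_{n-1}$, which gives $\Delta_n^{(0)}=\det T_{n-1}$. For $n=1$ this reads $\det T_0=1$, with the empty-determinant convention, consistent with $a_0=1$.

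The only step needing care is the truncation identity, in particular confirming that the block of $LXL^T$ does not see entries of $X$ outside the $n\times n$ corner. Lower triangularity of $L$ is what guarantees this, so I expect no real obstacle beyond stating it cleanly.
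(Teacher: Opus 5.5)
Your proposal is correct and matches the paper's proof: you truncate the factorizations of Theorem~\ref{thm:unified-factorization} to the leading $n\times n$ corner and use that $\widehat D_n$ and $D_{\MP,n}$ are unitriangular. Your explicit justification that $(LXL^T)_n=L_nX_nL_n^T$ for lower-triangular $L$ fills in a step the paper leaves implicit. You also correctly identify the corner of $1\oplus T$ as $1\oplus T_{n-1}$, whereas the paper's intermediate display writes $1\oplus T_n$, a harmless slip.
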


\begin{proof}
Taking $n\times n$ principal submatrices in Theorem~\ref{thm:unified-factorization} gives $H^{(0)}_n = \widehat D_n (1\oplus T_n) \widehat D_n^T$ and $H^{(r)}_n = D_{\MP,n} (P^{r-1})_n D_{\MP,n}^T$. Since $\det \widehat D_n = \det D_{\MP,n} = 1$, taking determinants yields $\Delta_n^{(0)} = \det(1\oplus T_{n-1}) = \det T_{n-1}$ and $\Delta_n^{(r)} = \det((P^{r-1})_n)$.
\end{proof}
If $\delta_n=\det P_n$, then $\delta_n=\delta_{n-1}-\delta_{n-2}\quad(n\geq2),$ with $\delta_0=1,\qquad \delta_1=2.$
Consequently,
\[\Delta_n^{(2)}=\delta_n=2\cos\left(\frac{(n-1)\pi}{3}\right).
\]
More generally,  $\Delta_n^{(r)}=\det\left((P^{r-1})_n\right),\quad r\geq1,$
while
\[
\Delta_n^{(0)}
=
\det\left((1\oplus T)_n\right),
\qquad
T=P-e_0e_0^T.
\]
Thus the complete family of shifted Hankel determinants of
$\widehat D$ is encoded by the single symmetric production matrix
$P$ together with the boundary correction $1\oplus T$.

The classical \emph{Desnanot--Jacobi identity} (also known as \emph{Lewis Carroll's condensation formula}) \cite{Desnanot1819, Krattenthaler2005} states that for any $n \times n$ matrix $M = (m_{i,j})_{1 \le i,j \le n}$ with $n \ge 3$,
\begin{equation}\label{eq:desnanot-jacobi}
\det(M) \det(M_{1,n; 1,n}) = \det(M_{1;1}) \det(M_{n;n}) - \det(M_{1;n}) \det(M_{n;1}),
\end{equation}
where $M_{i;j}$ denotes the $(n-1) \times (n-1)$ submatrix of $M$ obtained by removing row $i$ and column $j$, and $M_{i,j; k,l}$ denotes the $(n-2) \times (n-2)$ submatrix obtained by removing rows $i,j$ and columns $k,l$.
\begin{theorem}
\label{thm:four-shifts}
For $
\Delta_n^{(r)}=\det(a_{i+j+r})_{0\leq i,j<n},$ we have
\[
\Delta_n^{(3)}
=
\begin{cases}
2n+1,&n\equiv0\pmod3,\\[2mm]
2n+3,&n\equiv1\pmod3,\\[2mm]
2n+2,&n\equiv2\pmod3.
\end{cases}
\]
\end{theorem}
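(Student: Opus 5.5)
By Corollary~\ref{cor:unified-hankel-determinants} with $r=3$, we have $\Delta_n^{(3)}=\det\bigl((P^2)_n\bigr)$. The plan is to compute this determinant of the leading $n\times n$ block of $P^2$ in closed form.

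\textbf{Step 1: reduce to a bordered product.} Since $P$ is tridiagonal, $(P^2)_n=P_nP_n+v v^T$. Here $v=e_{n-1}$ is the last unit vector of length $n$, because the only missing term is the contribution $P_{n-1,n}P_{n,n-1}=1$ to the $(n-1,n-1)$ entry. The matrix determinant lemma then gives
\[
\det\bigl((P^2)_n\bigr)=\delta_n^2\bigl(1+e_{n-1}^TP_n^{-2}e_{n-1}\bigr)
=\delta_n^2+\delta_n^2\,\|P_n^{-1}e_{n-1}\|^2 .
\]
When $\delta_n=0$ this formula degenerates. To avoid that, write instead $\det(P_n^2+vv^T)=\det(P_n)^2+\det\bigl(\operatorname{adj}\text{-type}\bigr)$, namely
\[
\det\bigl((P^2)_n\bigr)=\delta_n^2+\|\operatorname{adj}(P_n)e_{n-1}\|^2 .
\]
This holds polynomially in the entries, so it is valid in all cases.

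\textbf{Step 2: identify the adjugate column.} For a tridiagonal matrix with unit off-diagonals, the last column of $\operatorname{adj}(P_n)$ has entries $(-1)^{n-1-i}\delta_i$ for $i=0,\dots,n-1$. Here $\delta_i=\det P_i$ is the leading principal minor (the trailing factors are products of the off-diagonal $1$'s). Hence
\[
\Delta_n^{(3)}=\delta_n^2+\sum_{i=0}^{n-1}\delta_i^2 .
\]

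\textbf{Step 3: evaluate with the known minors.} From the recurrence $\delta_n=\delta_{n-1}-\delta_{n-2}$ with $\delta_0=1$, $\delta_1=2$, the sequence $\delta_n$ is periodic of period $6$: $1,2,1,-1,-2,-1,\dots$. The squares $\delta_i^2$ are therefore periodic of period $3$: $1,4,1$, summing to $6$ per period. For $n=3q+s$ this gives:
\begin{itemize}
\item $s=0$: $\sum_{i<n}\delta_i^2=6q=2n$, and $\delta_n^2=1$, so $\Delta_n^{(3)}=2n+1$.
\item $s=1$: the sum is $6q+1=2n-1$, and $\delta_n^2=4$, so $\Delta_n^{(3)}=2n+3$.
\item $s=2$: the sum is $6q+5=2n+1$, and $\delta_n^2=1$, so $\Delta_n^{(3)}=2n+2$.
\end{itemize}
All three cases match the statement. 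As a sanity check, $\Delta_1^{(3)}=a_3=5$ agrees with the case $n\equiv 1$.

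\textbf{Main obstacle.} The delicate point is Step~1, specifically the bordering correction. I need to verify that $(P^2)_n-(P_n)^2$ is exactly $e_{n-1}e_{n-1}^T$, including the boundary row $0$ where the diagonal entry is $\alpha_0=2$; it is, since only the missing index $n$ contributes. I also need the adjugate form of the rank-one update, in order to cover the cases $\delta_n=0$ that do not occur here but would matter for generic $n$. If that route feels shaky, the fallback is the Desnanot--Jacobi identity~\eqref{eq:desnanot-jacobi} applied to $(P^2)_n$, which is pentadiagonal. This would give a recurrence in $n$ that can be checked against the claimed formula by induction, using the explicit period-$6$ values of $\delta_n$.
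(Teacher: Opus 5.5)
Your proof is correct. It reaches the same intermediate identity as the paper, $\Delta_n^{(3)}=\sum_{i=0}^{n}\delta_i^2$, but by a different mechanism.

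The paper never looks at $(P^2)_n$. It applies the Desnanot--Jacobi identity to the Hankel matrix $(a_{i+j+1})_{0\le i,j\le n}$ and uses $\Delta_m^{(1)}=1$ and $\Delta_m^{(2)}=\delta_m$. This gives the recurrence $\Delta_n^{(3)}=\Delta_{n-1}^{(3)}+\delta_n^2$, which the paper telescopes from $\Delta_0^{(3)}=1$.

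You start instead from $\Delta_n^{(3)}=\det\bigl((P^2)_n\bigr)$. You observe that truncating the square of a tridiagonal matrix loses only the rank-one term $e_{n-1}e_{n-1}^T$, which is correct, including for $n=1$. You then use $\det(B+vv^T)=\det B+v^T\operatorname{adj}(B)v$ together with $\operatorname{adj}(P_n^2)=\operatorname{adj}(P_n)^2$. Finally you use the explicit last column $\bigl((-1)^{n-1-i}\delta_i\bigr)_{0\le i<n}$ of the symmetric matrix $\operatorname{adj}(P_n)$. Each of these steps checks out. Since $\delta_n\in\{\pm1,\pm2\}$ is never zero, even the naive inverse version of the determinant lemma would have sufficed.

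Your route gives the sum of squares in one stroke, without induction, and it explains the sum structurally. Because $Q_i(0)=(-1)^i\delta_i$, the determinant is the Christoffel--Darboux kernel $\sum_{i\le n}Q_i(0)^2$ of the orthogonal polynomials attached to $P$.

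The paper's route is the one that scales. The same condensation step with $r=2$ produces the recurrence for $\Delta_n^{(4)}$ in the next theorem. By contrast, $(P^3)_n-(P_n)^3$ is a rank-two correction on the last two rows and columns. Extending your bordering argument to the next shift would therefore need more bookkeeping.
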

\begin{proof}
The base cases are $\Delta_0^{(3)} = 1$ and $\Delta_1^{(3)} = 5$. For $n \geq 2$, applying the Desnanot--Jacobi identity to the Hankel matrix $(a_{i+j+1})_{0 \leq i,j \leq n}$ yields
\[
\Delta_{n+1}^{(1)}\Delta_{n-1}^{(3)} = \Delta_n^{(1)}\Delta_n^{(3)} - \left(\Delta_n^{(2)}\right)^2.
\]
Since $\Delta_m^{(1)} = 1$ for all $m \geq 0$, this simplifies to $\Delta_n^{(3)} = \Delta_{n-1}^{(3)} + \left(\Delta_n^{(2)}\right)^2$. Unfolding the recurrence with $\Delta_1^{(3)} = 1 + (\Delta_1^{(2)})^2$ gives
\[
\Delta_n^{(3)} = 1 + \sum_{m=1}^{n} \left(\Delta_m^{(2)}\right)^2, \qquad n \geq 1.
\]
Using the explicit evaluation $\Delta_m^{(2)} = 2\cos\left(\frac{(m-1)\pi}{3}\right)$, its square is periodic modulo $3$:
\[
\left(\Delta_m^{(2)}\right)^2 = \begin{cases}
4, & m \equiv 1 \pmod 3, \\
1, & m \equiv 0, 2 \pmod 3.
\end{cases}
\]
Summing these values explicitly across residue classes yields
\[
\Delta_n^{(3)} = 1 + n + 3\,\#\{1 \leq m \leq n : m \equiv 1 \pmod 3\},
\]
which evaluates directly to $2n+1$ if $n \equiv 0 \pmod 3$, $2n+3$ if $n \equiv 1 \pmod 3$, and $2n+2$ if $n \equiv 2 \pmod 3$.
\end{proof}
\begin{lemma}
\label{lem:higher-shift-DJ}
For every $r\geq0$ and every $n\geq2$,
\begin{equation}
\Delta_n^{(r)}\Delta_{n-2}^{(r+2)}=\Delta_{n-1}^{(r)}\Delta_{n-1}^{(r+2)}-\left(\Delta_{n-1}^{(r+1)}\right)^2,
\label{eq:higher-shift-DJ}
\end{equation}
\end{lemma}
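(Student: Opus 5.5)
The plan is to apply the Desnanot--Jacobi identity \eqref{eq:desnanot-jacobi} to the $n\times n$ Hankel matrix $M=(a_{i+j+r})_{0\le i,j\le n-1}$. We then identify each of the five minors appearing there as a shifted Hankel determinant of smaller order. This is the same computation that was carried out for $r=1$ in the proof of Theorem~\ref{thm:four-shifts}, now done for general shift $r$.

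With rows and columns indexed $0,\dots,n-1$, the identifications are as follows.

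\begin{itemize}
\item The full determinant is $\det M=\Delta_n^{(r)}$.
\item Deleting the first and last rows and the first and last columns leaves $(a_{i+j+r+2})_{0\le i,j\le n-3}$, since the surviving indices are $i,j\in\{1,\dots,n-2\}$ and we may shift both by one. Hence $\det M_{1,n;1,n}=\Delta_{n-2}^{(r+2)}$.
\item Deleting the first row and first column shifts both indices by one, giving $(a_{i+j+r+2})_{0\le i,j\le n-2}$. Hence $\det M_{1;1}=\Delta_{n-1}^{(r+2)}$.
\item Deleting the last row and last column leaves $(a_{i+j+r})_{0\le i,j\le n-2}$. Hence $\det M_{n;n}=\Delta_{n-1}^{(r)}$.
\item Deleting the first row and last column leaves entries $a_{(i+1)+j+r}$ for $0\le i,j\le n-2$. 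This is $(a_{i+j+r+1})$, so $\det M_{1;n}=\Delta_{n-1}^{(r+1)}$.
\item Deleting the last row and first column gives the transpose of the previous matrix, by the Hankel symmetry $a_{i+j}=a_{j+i}$. Hence $\det M_{n;1}=\Delta_{n-1}^{(r+1)}$ as well.
\end{itemize}

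Substituting these six determinants into \eqref{eq:desnanot-jacobi} gives \eqref{eq:higher-shift-DJ} directly. The product of the two off-diagonal minors becomes the square $\left(\Delta_{n-1}^{(r+1)}\right)^2$.

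The only delicate point is the range of $n$. Identity \eqref{eq:desnanot-jacobi} was stated for $n\ge3$, while the lemma claims $n\ge2$. For $n\ge3$ the argument above applies verbatim.

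For $n=2$ we use the convention $\Delta_0^{(r+2)}=1$. The claim then reads
\[
\Delta_2^{(r)}=a_r\,a_{r+2}-a_{r+1}^2,
\]
which is just the $2\times2$ determinant $\det\begin{pmatrix}a_r&a_{r+1}\\a_{r+1}&a_{r+2}\end{pmatrix}$. This can be checked directly, or by noting that \eqref{eq:desnanot-jacobi} holds for $2\times2$ matrices when the empty minor is taken to be $1$.

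No step presents a genuine obstacle. The proof is careful bookkeeping of which rows and columns are removed, together with the symmetry of Hankel matrices that makes the two off-diagonal minors equal.
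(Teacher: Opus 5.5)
Your proof is correct and is essentially the paper's argument: the paper only says the lemma follows from a ``straightforward application of the Desnanot--Jacobi identity'' to the Hankel matrix $(a_{i+j+r})_{0\le i,j\le n-1}$. Your minor-by-minor identification, with Hankel symmetry making the two off-diagonal minors equal, supplies the omitted bookkeeping, and your separate check of $n=2$ via $\Delta_0^{(r+2)}=1$ usefully covers a case the paper's statement of \eqref{eq:desnanot-jacobi} (given only for $n\ge3$) does not.
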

\begin{proof}
The result follows from a straightforward application of the Desnanot--Jacobi identity.
\end{proof}
\begin{theorem}
\label{thm:D4-explicit}
For every $n\geq0$, the fourth shifted Hankel determinant $\Delta_n^{(4)} = \det(a_{i+j+4})_{0\leq i,j<n}$ is given by
\begin{equation*}
\Delta_n^{(4)} = \begin{cases}
\frac{2n}{3}+1, & n\equiv0\pmod6, \\[2mm]
2n^2+\frac{19n}{3}+\frac{14}{3}, & n\equiv1\pmod6, \\[2mm]
2n^2+\frac{17n}{3}+\frac{11}{3}, & n\equiv2\pmod6, \\[2mm]
-\frac{2n}{3}-1, & n\equiv3\pmod6, \\[2mm]
-2n^2-\frac{19n}{3}-\frac{14}{3}, & n\equiv4\pmod6, \\[2mm]
-2n^2-\frac{17n}{3}-\frac{11}{3}, & n\equiv5\pmod6.
\end{cases}
\end{equation*}
\end{theorem}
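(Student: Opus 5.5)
We will use Lemma~\ref{lem:higher-shift-DJ} with $r=2$, which expresses $\Delta^{(4)}$ through the already known $\Delta^{(2)}$ and $\Delta^{(3)}$:
\[
\Delta_n^{(2)}\Delta_{n-2}^{(4)}=\Delta_{n-1}^{(2)}\Delta_{n-1}^{(4)}-\left(\Delta_{n-1}^{(3)}\right)^2 .
\]
Replacing $n$ by $n+2$ gives
\[
\Delta_{n+1}^{(4)}=\frac{\Delta_{n+2}^{(2)}\Delta_n^{(4)}+\left(\Delta_{n+1}^{(3)}\right)^2}{\Delta_{n+1}^{(2)}}.
\]
Here $\Delta_m^{(2)}=2\cos((m-1)\pi/3)$ runs through the period-six pattern $2,1,-1,-2,-1,1$ for $m\equiv1,2,3,4,5,0\pmod 6$. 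In particular it never vanishes, so the division is always legitimate. The values $\Delta_n^{(3)}$ are given by Theorem~\ref{thm:four-shifts}.

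The recurrence determines every $\Delta_n^{(4)}$ from $\Delta_0^{(4)}=1$ together with one more starting value. We compute $\Delta_1^{(4)}=a_4=13$ directly, which agrees with $2+\frac{19}{3}+\frac{14}{3}=13$. As a further check, $\Delta_2^{(4)}=a_4a_6-a_5^2=13\cdot96-35^2=23$, and the $n\equiv2$ formula gives $8+\frac{34}{3}+\frac{11}{3}=23$.

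The proof is then an induction on $n$. Both sides of the claimed formula are determined by $n\bmod 6$, and $\Delta_{n+1}^{(3)}$ depends only on $n\bmod 3$. We therefore write $n=6q+s$ with $s\in\{0,\dots,5\}$ and, for each $s$, check one polynomial identity in $q$: the right-hand side of the recurrence, using the claimed $\Delta_n^{(4)}$, the appropriate $\pm1,\pm2$ for $\Delta_{n+1}^{(2)},\Delta_{n+2}^{(2)}$, and $(\Delta_{n+1}^{(3)})^2$ read off from Theorem~\ref{thm:four-shifts}, must equal the claimed $\Delta_{n+1}^{(4)}$.

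As a sample, take $s=0$. Here $\Delta_{n+1}^{(2)}=2$, $\Delta_{n+2}^{(2)}=1$, and $\Delta_{n+1}^{(3)}=2n+5$. The recurrence gives
\[
\Delta_{n+1}^{(4)}=\tfrac12\left(\tfrac{2n}{3}+1+(2n+5)^2\right)=2n^2+\tfrac{31n}{3}+13,
\]
and this equals $2(n+1)^2+\tfrac{19(n+1)}{3}+\tfrac{14}{3}$, as required. The other five residues are handled the same way.

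The main obstacle is bookkeeping rather than any conceptual step. In each of the six residue classes we must pick the correct signs and magnitudes of $\Delta^{(2)}$ and the correct branch of $\Delta^{(3)}$, and confirm that the resulting quadratic divided by $\pm1$ or $\pm2$ reproduces the stated form. The most delicate transitions are those from a quadratic branch back to a linear one, namely $s=2\to3$ and $s=5\to0$. There the quadratic terms must cancel exactly: $-\Delta_n^{(4)}$ has to cancel $(\Delta_{n+1}^{(3)})^2$ up to a linear remainder. For example, at $s=2$ the divisor is $\Delta_{n+1}^{(2)}=-1$ and we need $-\Delta_n^{(4)}\cdot(-2)\cdot(-1)+(2n+3)^2$, suitably signed, to collapse to $-\tfrac{2(n+1)}{3}-1$. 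We would verify these two cases first, since they are where an error in the stated constants would surface.
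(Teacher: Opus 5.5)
Your proposal is correct and takes essentially the same approach as the paper. Both use the Desnanot--Jacobi recurrence with $r=2$, the nonvanishing of $\Delta^{(2)}_n$, and the known values of $\Delta^{(2)}$ and $\Delta^{(3)}$, and then check a polynomial identity for each residue modulo $6$; these correspond to the paper's six interlaced identities for $F_s(k)=\Delta^{(4)}_{6k+s}$, and all six do hold. Two minor points: the solved recurrence is first order in $\Delta^{(4)}$, so $\Delta_0^{(4)}=1$ alone already determines the sequence, and your $s=2$ computation should read $\Delta_{n+1}^{(4)}=2\Delta_n^{(4)}-(2n+3)^2$.
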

\begin{proof}
For $n \geq 2$, applying the Desnanot--Jacobi identity (with $r=2$) yields the recurrence
\begin{equation}\label{eq:DJ_recurrence}
\Delta_n^{(2)}\Delta_{n-2}^{(4)} = \Delta_{n-1}^{(2)}\Delta_{n-1}^{(4)} - \left(\Delta_{n-1}^{(3)}\right)^2.
\end{equation}
Since $\Delta_n^{(2)} = 2\cos\left(\frac{(n-1)\pi}{3}\right) \in \{\pm 1, \pm 2\}$, it is non-zero for all integers $n$. Thus, recurrence~\eqref{eq:DJ_recurrence} uniquely determines the sequence $\left(\Delta_n^{(4)}\right)_{n \ge 0}$ from the initial values $\Delta_0^{(4)} = 1$ and $\Delta_1^{(4)} = a_4 = 13$.

To establish the explicit formulas, we analyze the sequence across its six residue classes modulo $6$. Let $F_s(k) := \Delta_{6k+s}^{(4)}$ for $0 \leq s \leq 5$. Evaluating recurrence~\eqref{eq:DJ_recurrence} according to the residue class of $n$ generates a system of six interlaced identities:
\begin{align*}
F_0(k)  &= 2F_1(k) - (12k+5)^2, \\
F_1(k)  &= -F_2(k) + (12k+6)^2, \\
2F_2(k) &= F_3(k) + (12k+7)^2, \\
F_3(k)  &= 2F_4(k) + (12k+11)^2, \\
F_4(k)  &= -F_5(k) - (12k+12)^2, \\
2F_5(k) &= F_0(k+1) - (12k+13)^2.
\end{align*}
Direct substitution confirms that the proposed polynomial expressions for $F_s(k)$ identically satisfy this system. Furthermore, evaluating these polynomials at $k=0$ gives the initial tuple
\[
\big(F_0(0), F_1(0), F_2(0), F_3(0), F_4(0), F_5(0)\big) = (1, 13, 23, -3, -62, -82),
\]
which precisely matches the direct determinant computations of $\Delta_n^{(4)} = \det(a_{i+j+4})_{0\leq i,j<n}$ for $0 \leq n \leq 5$. Because the proposed formulas satisfy both the uniquely defining recurrence and the correct initial conditions, they hold for all $n \geq 0$.
\end{proof}

\section{Known identities and combinatorial interpretations}
\label{sec:further-connections}

We now collect several identities and combinatorial interpretations of the sequence $(a_n)_{n \geq 0}$. These formulations are compatible with our Riordan-array notation and often correspond to properties recorded for the OEIS sequence \href{https://oeis.org/A005773}{A005773}.


\begin{proposition}\label{prop:A-quadratic}
The generating function $A(x)$ satisfies the quadratic equation $A(x)^2 - A(x) = \frac{x}{1-3x}$. Extracting coefficients yields the convolution identity $\sum_{k=0}^{n} a_k a_{n-k} = a_n + 3^{n-1}$ for $n \geq 1$. Separating the boundary terms gives the recurrence $a_n = 3^{n-1} - \sum_{k=1}^{n-1} a_k a_{n-k}$.
\end{proposition}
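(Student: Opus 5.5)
The plan is to derive the quadratic relation directly from the closed form of $A(x)$ in Theorem~\ref{thm:GF}, and then read off coefficients. Put $R(x)=\sqrt{(1+x)/(1-3x)}$, so that $A=\tfrac12(1+R)$. Then
\[
A^2-A=A(A-1)=\tfrac14(1+R)(R-1)=\tfrac14\bigl(R^2-1\bigr).
\]
Since $R^2=\frac{1+x}{1-3x}$, we have $R^2-1=\frac{4x}{1-3x}$, and so $A^2-A=\frac{x}{1-3x}$.

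An alternative route, consistent with Corollary~\ref{cor:row-sums-hatD}, uses $A=1/(1-f)$. This gives $A^2-A=A\cdot\frac{f}{1-f}=\frac{f}{(1-f)^2}$. One could then verify $\frac{f}{(1-f)^2}=\frac{x}{1-3x}$ from $f=x(1+f+f^2)$, since $(1-f)^2=1-2f+f^2$ and $x(1-3x)^{-1}$ can be matched after clearing denominators. I would use the first route, because it is a one-line computation.

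Next, extract coefficients. For $n\ge1$ we have $[x^n]\frac{x}{1-3x}=3^{n-1}$ and $[x^n]A^2=\sum_{k=0}^n a_ka_{n-k}$. Comparing coefficients gives $\sum_{k=0}^n a_ka_{n-k}-a_n=3^{n-1}$, which is the convolution identity. At $n=0$ both sides of the series identity have constant term $0$, because $a_0^2-a_0=0$, so nothing further is needed there.

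Finally, separate out the boundary terms $k=0$ and $k=n$. Using $a_0=1$, these contribute $2a_n$ when $n\ge1$. The convolution identity then becomes $2a_n+\sum_{k=1}^{n-1}a_ka_{n-k}=a_n+3^{n-1}$, which rearranges to the stated recurrence.

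There is no real obstacle here. The only point needing care is the bookkeeping at $n=1$: the two boundary terms $k=0$ and $k=n$ are distinct indices, and the inner sum is empty, which gives $a_1=1$ correctly.
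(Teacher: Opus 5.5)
Your proposal is correct and follows the outline already built into the statement: first the quadratic relation, then coefficient extraction, then removal of the boundary terms $k=0$ and $k=n$. The paper never derives $A^2-A=\frac{x}{1-3x}$ explicitly; it only asserts $A/(1-f)=A^2=A+\frac{x}{1-3x}$ in Corollary~\ref{cor:row-sums-hatD}. Your computation $A(A-1)=\tfrac14(R^2-1)=\frac{x}{1-3x}$ from the closed form in Theorem~\ref{thm:GF} supplies that step cleanly. Your alternative route through $f$ also closes in one line: substituting $x=f/(1+f+f^2)$ gives $\frac{x}{1-3x}=\frac{f}{(1-f)^2}$.
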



\begin{proposition}\label{prop:motzkin-composition}
Let $M(x) = \sum_{n \geq 0} M_n x^n$ be the generating function of the Motzkin numbers. Then $A(x) = \frac{1}{1-xM(x)}$, or equivalently, $A(x) = 1 + x M(x) A(x)$. This implies $a_n = \sum_{k=0}^{n-1} M_k \, a_{n-1-k}$ for $n \geq 1$. Moreover, $g(x) = M(x)A(x)$.
\end{proposition}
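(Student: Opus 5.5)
The plan is to work at the level of generating functions, using only $f=xM$, $A=1/(1-f)$ and $A=1+xg$ from Section~\ref{sec:GF}.

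\emph{Step 1: $A=1/(1-xM)$.} This is Howroyd's identity, which was already derived after Theorem~\ref{thm:GF}: setting $f(x)=xM(x)$, the functional equation $f=x(1+f+f^2)$ gives the closed form of $f$, and rationalizing $1/(1-f)$ reproduces $\tfrac12+\tfrac12\sqrt{(1+x)/(1-3x)}$. Since this was a direct algebraic check, I will simply cite it. As an independent check I would verify the equivalent form $A=1+xMA$, which is just $A(1-xM)=1$.

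\emph{Step 2: the convolution.} Extract $[x^n]$ from $A=1+xM(x)A(x)$. For $n\ge1$ the constant term $1$ contributes nothing, and $[x^n]\,xMA=[x^{n-1}]MA=\sum_{k=0}^{n-1}M_k a_{n-1-k}$. This gives the stated convolution for $a_n$.

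\emph{Step 3: $g=MA$.} Comparing $A=1+xMA$ with $A=1+xg$ (Theorem~\ref{thm:GF}) gives $xg=xMA$. Cancelling $x$ in the power series ring yields $g=MA$. An alternative route is Lemma~\ref{prop:two-riordan-arrays}, since $Af=xg$ together with $f=xM$ gives the same identity directly.

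I expect no real obstacle here. The only point needing care is to state clearly that Step 1 rests on the earlier rationalization, or else to redo it. To redo it, multiply numerator and denominator of $1/(1-f)$ by the conjugate, using $1-f=\frac{1+x+\sqrt{1-2x-3x^2}}{2x}$, and simplify with $1-2x-3x^2=(1+x)(1-3x)$.
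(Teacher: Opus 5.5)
Your proof is correct and uses the same ingredients the paper relies on (the paper states this proposition without a separate proof): Howroyd's identity $A=1/(1-f)$ with $f=xM$ from the discussion after Theorem~\ref{thm:GF}, coefficient extraction from $A=1+xMA$, and cancelling $x$ in $xg=Af=xMA$ (Lemma~\ref{prop:two-riordan-arrays}). One slip occurs in your optional recomputation: in fact $1-f=\frac{3x-1+\sqrt{1-2x-3x^2}}{2x}$, not $\frac{1+x+\sqrt{1-2x-3x^2}}{2x}$ (your version has a $1/x$ pole), so the conjugate to use is $(1-3x)+\sqrt{1-2x-3x^2}$, and then $(1+x)(1-3x)-(1-3x)^2=4x(1-3x)$ yields $\tfrac12+\tfrac12\sqrt{(1+x)/(1-3x)}$.
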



\begin{proposition}\label{prop:weighted-compositions}
For $n \geq 1$, we have $a_n = \sum_{\lambda \models n} \prod_{j=1}^{\ell(\lambda)} M_{\lambda_j-1}$, where the sum runs over all ordered compositions $\lambda = (\lambda_1, \ldots, \lambda_{\ell(\lambda)})$ of $n$. The empty composition yields $a_0 = 1$.
\end{proposition}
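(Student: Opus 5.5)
The natural route is generating-functional. By Proposition~\ref{prop:motzkin-composition}, $A(x)=\frac{1}{1-xM(x)}$. Expanding this as a geometric series gives
\[
A(x)=\sum_{\ell\ge0}\bigl(xM(x)\bigr)^{\ell}=\sum_{\ell\ge0}f(x)^{\ell},
\]
which is valid as formal power series because $f(0)=0$. The $\ell=0$ term contributes only the constant $1$; this accounts for $a_0=1$, coming from the empty composition.

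For $\ell\ge1$, I will write $f(x)=\sum_{m\ge1}M_{m-1}x^m$ and expand the $\ell$-th power as a Cauchy product:
\[
[x^n]f(x)^{\ell}=\sum_{\substack{\lambda_1+\cdots+\lambda_\ell=n\\ \lambda_j\ge1}}\prod_{j=1}^{\ell}M_{\lambda_j-1}.
\]
The index tuples $(\lambda_1,\dots,\lambda_\ell)$ here are exactly the compositions of $n$ with $\ell$ parts.

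Summing over $\ell$ then gives $a_n=[x^n]A(x)$ as the sum over all compositions of $n$, which is the claim. The sum over $\ell$ is finite for each fixed $n$, since $f^\ell=O(x^\ell)$, so only $\ell\le n$ contribute and no convergence issue arises.

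As an alternative, one could induct on $n$ using the recurrence $a_n=\sum_{k=0}^{n-1}M_k\,a_{n-1-k}$ from Proposition~\ref{prop:motzkin-composition}. The idea is to condition on the first part $\lambda_1=k+1$; the remaining parts then form a composition of $n-1-k$, contributing $a_{n-1-k}$ by induction, with the empty composition covering the case $k=n-1$.

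There is no real obstacle in either version. The only point needing care is bookkeeping: the part size $\lambda_j$ must correspond to the weight $M_{\lambda_j-1}$, which reflects the shift $f=xM$, and the $n=0$ and empty-composition conventions must be handled consistently.
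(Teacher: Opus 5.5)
Your proposal is correct and takes the same approach as the paper: both expand $A(x)=1/(1-xM(x))=\bigl(1-\sum_{k\ge1}M_{k-1}x^k\bigr)^{-1}$ as a geometric series and read off $[x^n]$ as a sum over compositions, with a part of size $k$ weighted by $M_{k-1}$. Your explicit Cauchy-product step and the remark that only $\ell\le n$ contribute spell out details the paper leaves implicit.
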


\begin{proof}
By Proposition~\ref{prop:motzkin-composition}, $A(x) = \big(1-\sum_{k \geq 1} M_{k-1} x^k\big)^{-1}$. Expanding this geometrically and extracting the coefficient of $x^n$ directly yields the sum over compositions, where each part of size $k$ contributes a weight of $M_{k-1}$.
\end{proof}


\begin{proposition}\label{prop:central-binomial}
For every $n \geq 0$, $\MP_n = a_{n+1} = \sum_{k=0}^{n} \binom{n}{k} \binom{k}{\lfloor k/2 \rfloor}$. Thus, the Motzkin-prefix sequence is the binomial transform of \href{https://oeis.org/A001405}{OEIS A001405}.
\end{proposition}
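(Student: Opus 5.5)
We prove the identity $\MP_n=\sum_{k=0}^{n}\binom{n}{k}\binom{k}{\lfloor k/2\rfloor}$ at the level of generating functions, using the closed form of $g(x)$ from Theorem~\ref{thm:GF}. Let $B(x)=\sum_{k\ge0}\binom{k}{\lfloor k/2\rfloor}x^k$ be the generating function of the central binomial sequence A001405. The binomial transform of a sequence with generating function $B(x)$ has generating function $\frac{1}{1-x}B\!\left(\frac{x}{1-x}\right)$. It is therefore enough to find $B$ in closed form and check that this substitution reproduces $g(x)=\frac{1}{2x}\bigl(\sqrt{\frac{1+x}{1-3x}}-1\bigr)$.

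To get $B$, I would use a combinatorial route rather than a generating-function identity recalled from memory. The number $\binom{k}{\lfloor k/2\rfloor}$ counts the prefixes of length $k$ of Dyck paths that never go below the axis (the ballot or reflection count). Their generating function follows from the kernel method, or from the decomposition ``first-passage factorization'' with $c(x)=\frac{1-\sqrt{1-4x^2}}{2x^2}$ the Catalan series in $x^2$. Every such prefix is an alternating word of Dyck paths and up-steps, which gives
\[
B(x)=\frac{c(x)}{1-xc(x)}.
\]
After rationalizing, this should become
\[
B(x)=\frac{1}{2x}\left(\sqrt{\frac{1+2x}{1-2x}}-1\right).
\]
I would confirm this closed form by checking its first few coefficients $1,1,2,3,6,10$.

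Next we substitute $x\mapsto \frac{x}{1-x}$ and multiply by $\frac{1}{1-x}$. We have $1+\frac{2x}{1-x}=\frac{1+x}{1-x}$ and $1-\frac{2x}{1-x}=\frac{1-3x}{1-x}$, so the ratio under the root becomes $\frac{1+x}{1-3x}$. The prefactor is $\frac{1}{1-x}\cdot\frac{1-x}{2x}=\frac{1}{2x}$. The result is exactly $g(x)$, and comparing coefficients gives the claim. The equality $\MP_n=a_{n+1}$ was already recorded in Theorem~\ref{thm:GF}.

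The main obstacle is deriving $B(x)$ cleanly. A combinatorial alternative avoids it. Decide which steps of a Motzkin prefix of length $n$ are horizontal; the $k$ remaining steps form a prefix made of $u$ and $d$ steps that never goes below the axis. There are $\binom{n}{k}$ ways to choose the positions of these $k$ steps, and by the reflection principle there are $\binom{k}{\lfloor k/2\rfloor}$ such $u,d$ prefixes. This bijection gives the identity directly, and I would present it as the primary proof, with the generating-function computation as a check.
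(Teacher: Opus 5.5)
Your proposal is correct and contains two arguments. The generating-function one is the paper's own route. The paper forms $\frac{1}{1-x}C\bigl(\frac{x}{1-x}\bigr)$ for the A001405 series $C$ and asserts that it ``explicitly evaluates'' to $g(x)$, without ever writing $C$ down. Your closed form $B(x)=\frac{1}{2x}\bigl(\sqrt{\frac{1+2x}{1-2x}}-1\bigr)$, together with $1\pm\frac{2x}{1-x}=\frac{1+x}{1-x},\,\frac{1-3x}{1-x}$, supplies exactly that missing computation. You do not need to fall back on checking a few coefficients, which is not a proof. The rationalization you anticipate does go through, giving $\frac{c}{1-xc}=\frac{\sqrt{1-4x^2}-(1-2x)}{2x(1-2x)}$. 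Independently, $\sqrt{\frac{1+2x}{1-2x}}=\frac{1+2x}{\sqrt{1-4x^2}}=(1+2x)\sum_m\binom{2m}{m}x^{2m}$ gives the coefficients $\binom{2m}{m}$ and $\frac12\binom{2m+2}{m+1}=\binom{2m+1}{m}$ of $B$ at once, with no combinatorial input.

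Your primary argument is genuinely different from the paper's. Level steps do not change height, so deleting them is a bijection between Motzkin prefixes of length $n$ with $k$ non-level steps and pairs consisting of a $k$-subset of positions and a nonnegative $\pm1$ walk of length $k$. The identity then reduces to the classical reflection count $\binom{k}{\lfloor k/2\rfloor}$, which is the telescoping sum of ballot numbers over final heights. This route works straight from the combinatorial definition of $\MP_n$ and never uses the closed form of $g$ from Theorem~\ref{thm:GF}. It also refines for free: tracking final height gives the triangle of Motzkin prefixes by final height as a binomial transform of ballot numbers, with $M_n=\sum_m\binom{n}{2m}C_m$ (Catalan $C_m$) as the height-$0$ case. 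The generating-function route, by contrast, needs no combinatorics once $g$ is known and fits the paper's algebraic framework, where the substitution $x\mapsto\frac{x}{1-x}$ visibly carries $\frac{1+2x}{1-2x}$ to $\frac{1+x}{1-3x}$.
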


\begin{proof}
Let $c_k = \binom{k}{\lfloor k/2 \rfloor}$ and $C(x)$ be its generating function. The ordinary binomial transform of $(c_k)_{k \geq 0}$ is $b_n = \sum_{k=0}^{n} \binom{n}{k} c_k$, with generating function $\frac{1}{1-x} C\big( \frac{x}{1-x} \big)$. For \href{https://oeis.org/A005773}{OEIS A005773}, this explicitly evaluates to $b_n = a_{n+1} = \MP_n$.
\end{proof}


\begin{proposition}\label{prop:inverse-binomial-A001700}
Let $b_k = \binom{2k+1}{k+1}$ (\href{https://oeis.org/A001700}{OEIS A001700}). Then $a_{n+1} = \sum_{k=0}^{n} (-1)^{n-k} \binom{n}{k} \binom{2k+1}{k+1}$ for $n \geq 0$.
\end{proposition}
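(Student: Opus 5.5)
We prove the identity by generating functions, following the proof of Proposition~\ref{prop:central-binomial}. The sequence $\binom{2k+1}{k+1}$ has generating function
\[
B(x)=\sum_{k\ge0}\binom{2k+1}{k+1}x^k=\frac{1}{2x}\left(\frac{1}{\sqrt{1-4x}}-1\right).
\]
This follows from $\binom{2k+1}{k+1}=\tfrac12\binom{2k+2}{k+1}$ together with $\sum_{m\ge0}\binom{2m}{m}x^m=(1-4x)^{-1/2}$: shifting the index $m=k+1$ and removing the $m=0$ term gives the displayed formula.

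The right-hand side $\sum_{k}(-1)^{n-k}\binom{n}{k}b_k$ is the inverse binomial transform of $(b_k)$. Its ordinary generating function is therefore
\[
\frac{1}{1+x}\,B\!\left(\frac{x}{1+x}\right).
\]
This is the standard companion of the formula $\frac{1}{1-x}C\bigl(\frac{x}{1-x}\bigr)$ used in Proposition~\ref{prop:central-binomial}, obtained by replacing $x$ with $-x$ inside the transform. So the claim reduces to showing that this series equals $g(x)=\sum_n \MP_n x^n=\sum_n a_{n+1}x^n$. The closed form of $g$ is available from Theorem~\ref{thm:GF}.

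The computation goes as follows. Put $y=x/(1+x)$. Then $1-4y=\frac{1-3x}{1+x}$, so
\[
\frac{1}{\sqrt{1-4y}}=\sqrt{\frac{1+x}{1-3x}}.
\]
Also $\frac{1}{1+x}\cdot\frac{1}{2y}=\frac{1}{2x}$. Substituting these two facts gives
\[
\frac{1}{1+x}B(y)=\frac{1}{2x}\left(\sqrt{\frac{1+x}{1-3x}}-1\right),
\]
which is exactly $g(x)$. Comparing coefficients of $x^n$ then yields $a_{n+1}=\MP_n=\sum_{k=0}^n(-1)^{n-k}\binom nk\binom{2k+1}{k+1}$.

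The only point that needs care is the inverse binomial transform formula. Write $u_n=\sum_k(-1)^{n-k}\binom nk b_k$. Then
\[
\sum_n u_n x^n=\sum_k b_k x^k\sum_{m}\binom{k+m}{k}(-x)^m=\sum_k b_k\frac{x^k}{(1+x)^{k+1}},
\]
which is the formula used above. The remaining algebra is routine, so no step is a serious obstacle.

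As an alternative check, one could start from Proposition~\ref{prop:central-binomial}, which writes $\MP_n$ as the binomial transform of $\binom{k}{\lfloor k/2\rfloor}$. Applying the inverse binomial transform would then require the convolution identity relating $\binom{k}{\lfloor k/2\rfloor}$ to $\binom{2k+1}{k+1}$ through a double binomial transform. The direct generating-function route above is cleaner, and it is the one we would use.
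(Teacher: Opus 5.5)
Your proof is correct and follows the same route as the paper, namely taking the inverse binomial transform of $b_k=\binom{2k+1}{k+1}$. The paper only asserts that this transform evaluates to $a_{n+1}$; you actually carry out the check, deriving the generating function $\frac{1}{1+x}B\left(\frac{x}{1+x}\right)$ and showing that it equals the closed form of $g(x)$ from Theorem~\ref{thm:GF}.
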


\begin{proof}
This follows directly from applying the binomial inverse transform, $c_n = \sum_{k=0}^{n} (-1)^{n-k} \binom{n}{k} b_k$, to the sequence $b_k$. For \href{https://oeis.org/A005773}{OEIS A005773}, this evaluates to $c_n = a_{n+1}$.
\end{proof}


\begin{proposition}\label{prop:catalan}
Let $C_k = \frac{1}{k+1} \binom{2k}{k}$ be the Catalan numbers, with generating function $C(z) = \frac{1-\sqrt{1-4z}}{2z}$. Then $A(x) = 1 + \frac{x}{1-3x} C\big( -\frac{x}{1-3x} \big)$. Consequently, for $n \geq 1$, $a_n = \sum_{k=0}^{n-1} (-1)^k 3^{n-1-k} \binom{n-1}{k} C_k$, and equivalently $a_{n+1} = \sum_{k=0}^{n} (-1)^k 3^{n-k} \binom{n}{k} C_k$.
\end{proposition}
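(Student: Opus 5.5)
We prove the identity $A(x)=1+\frac{x}{1-3x}\,C\bigl(-\frac{x}{1-3x}\bigr)$ directly at the level of generating functions, using the closed form of Theorem~\ref{thm:GF}, and then extract coefficients. Put $z=-\frac{x}{1-3x}$. Then
\[
1-4z=\frac{1-3x+4x}{1-3x}=\frac{1+x}{1-3x},
\]
so $\sqrt{1-4z}=\sqrt{\frac{1+x}{1-3x}}=R(x)$, the same square root appearing in Lemma~\ref{Som} and Theorem~\ref{thm:GF}. The branch is correct because both sides are power series with constant term $1$.

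Next we compute the right-hand side. We have
\[
\frac{x}{1-3x}\,C(z)=-z\,C(z)=-\frac{1-\sqrt{1-4z}}{2}=\frac{R(x)-1}{2}.
\]
Hence $1+\frac{x}{1-3x}C(z)=\frac{1+R(x)}{2}=A(x)$ by Theorem~\ref{thm:GF}. This proves the functional identity.

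For the coefficient formula, expand $C(z)=\sum_k C_k z^k$, which gives
\[
A(x)-1=\sum_{k\ge0}(-1)^kC_k\frac{x^{k+1}}{(1-3x)^{k+1}}.
\]
Use $[x^{m}](1-3x)^{-(k+1)}=\binom{m+k}{k}3^{m}$. Then, for $n\ge1$,
\[
a_n=\sum_{k=0}^{n-1}(-1)^kC_k\,3^{n-1-k}\binom{n-1}{k}.
\]
Here the coefficient of $x^n$ requires $m=n-1-k$, and $\binom{n-1}{k}$ arises as $\binom{m+k}{k}$. Replacing $n$ by $n+1$ gives the second form.

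The only point needing care is the composition $C(z)$ with $z=-x/(1-3x)$. It is legitimate as formal power series because $z$ has zero constant term, and the choice of square-root branch is fixed by the constant terms. Everything else is routine.
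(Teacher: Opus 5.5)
Your proof is correct and follows essentially the paper's route: substitute $x/(1-3x)$ into the Catalan generating function and identify the square root with $\sqrt{(1+x)/(1-3x)}$ to recover the closed form of $A(x)$ from Theorem~\ref{thm:GF}. Then expand $(1-3x)^{-k-1}$ by the negative binomial series and extract $[x^n]$. Using $z=-x/(1-3x)$ instead of the paper's $y=x/(1-3x)$ is only a notational difference. Your remarks on the legitimacy of the formal composition and on the square-root branch fill in details the paper leaves implicit.
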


\begin{proof}
Substituting $y = \frac{x}{1-3x}$ into the identity $1 + y C(-y) = \frac{1}{2} \big( 1 + \sqrt{1+4y} \big)$ simplifies to $\frac{1}{2} \big( 1 + \sqrt{\frac{1+x}{1-3x}} \big) = A(x)$. Rearranging yields $\frac{A(x)-1}{x} = \sum_{k \geq 0} (-1)^k C_k x^k (1-3x)^{-k-1}$. Applying the negative binomial expansion $(1-3x)^{-k-1} = \sum_{m \geq 0} \binom{m+k}{k} 3^m x^m$ and extracting the coefficient of $x^n$ completes the proof.
\end{proof}


\begin{proposition}\label{prop:base3-trinomial}
For $n \geq 1$, $a_n$ counts the $n$-digit integers in base $3$ with no leading zero whose digit sum equals $n$. Algebraically, $a_n = [t^n] (t+t^2)(1+t+t^2)^{n-1}$. This simplifies to the binomial sum $a_n = \sum_{k=0}^{\lfloor n/2 \rfloor} \binom{n-1}{k} \binom{n-k}{k}$.
\end{proposition}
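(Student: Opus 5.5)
The plan is to prove the three assertions of Proposition~\ref{prop:base3-trinomial} in turn: the combinatorial count, the coefficient formula, and the binomial sum.

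\textbf{Counting digit strings.} An $n$-digit base-$3$ integer with no leading zero is a word $x_1x_2\cdots x_n$ with $x_1\in\{1,2\}$ and $x_2,\ldots,x_n\in\{0,1,2\}$. Weight each digit $x$ by $t^{x}$. The generating polynomial of all such words by digit sum is then $(t+t^2)(1+t+t^2)^{n-1}$. Consequently the number of words with digit sum $n$ equals $[t^n](t+t^2)(1+t+t^2)^{n-1}$. This reduces the combinatorial statement to the algebraic identity $a_n=[t^n](t+t^2)(1+t+t^2)^{n-1}$.

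\textbf{The coefficient identity.} I will use Lagrange inversion exactly as in Theorem~\ref{thm:explicit-c}. Write $N_n(k)=[t^{k}](1+t+t^2)^n$. Then
\[
[t^n](t+t^2)(1+t+t^2)^{n-1}=N_{n-1}(n-1)+N_{n-1}(n-2).
\]
On the other side, $A=1/(1-f)$ with $f=x(1+f+f^2)$. Apply Lagrange inversion with $H(u)=1/(1-u)$, so that $H'(u)=1/(1-u)^2$, in the form
\[
[x^n]H(f)=\frac1n[u^{n-1}]H'(u)(1+u+u^2)^n .
\]
This expresses $a_n$ as $\frac1n\sum_{j\ge0}(j+1)N_n(n-1-j)$. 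That sum matches $\sum_{m\ge1}\frac mn N_n(n-m)$, which is the $k=1$ row-sum formula from Theorem~\ref{thm:explicit-c} (recall $a_n=\widehat d_{n,1}$ for $n\ge1$).

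\textbf{Main obstacle.} The hard step is converting this weighted sum into $N_{n-1}(n-1)+N_{n-1}(n-2)$. A cleaner route is the Lagrange variant
\[
[x^n]H(f)=[u^n]H(u)\,\phi(u)^{n-1}\bigl(\phi(u)-u\phi'(u)\bigr),\qquad \phi(u)=1+u+u^2 .
\]
Here $\phi-u\phi'=1-u^2=(1-u)(1+u)$, so the factor $1-u$ cancels against $H(u)=1/(1-u)$. This gives
\[
a_n=[u^n](1+u)(1+u+u^2)^{n-1}=N_{n-1}(n)+N_{n-1}(n-1).
\]
By the palindromic symmetry $N_{n-1}(k)=N_{n-1}(2n-2-k)$ we have $N_{n-1}(n)=N_{n-1}(n-2)$, which is exactly the target expression. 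This step, choosing the right form of Lagrange inversion and checking the cancellation, is the one that needs care. The small values ($a_1=1$, $a_2=2$, $a_3=5$) can be used as a check.

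\textbf{The binomial sum.} Start from the digit model. Let $k$ be the number of digits equal to $2$ and $n-2k$ the number equal to $1$, so the remaining $k$ digits are $0$. Counting by $k$ with the leading-digit restriction gives multinomial terms. The simpler route is algebraic: write $(1+t+t^2)^{n-1}=\sum_k\binom{n-1}{k}t^{k}(1+t)^{k}$, that is, expand $1+t(1+t)$ binomially. Then
\[
[t^n]\,t(1+t)\cdot t^k(1+t)^k=[t^{n-1-k}](1+t)^{k+1}=\binom{k+1}{n-1-k}.
\]
After the reindexing $k\mapsto n-1-k$ and Vandermonde-type rearrangement, this is to be checked against $\sum_k\binom{n-1}{k}\binom{n-k}{k}$. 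The expected direct match is to choose $k$ positions among $x_2,\ldots,x_n$ for the $0$s and then, among the remaining $n-k$ positions, $k$ positions for the $2$s. The digit sum then forces the count of $2$s to equal the count of $0$s, and the leading digit is automatically nonzero. This bijective count gives $\binom{n-1}{k}\binom{n-k}{k}$ directly, so I will present it that way.
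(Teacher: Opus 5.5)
Your proposal is correct, and it proves more than the paper's own argument does.

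\textbf{The coefficient identity.} The paper encodes the digit condition by $(t+t^2)(1+t+t^2)^{n-1}$ exactly as you do. It then writes ``splitting the leading factor gives $a_n=[t^{n-1}](1+t+t^2)^{n-1}+[t^{n-2}](1+t+t^2)^{n-1}$'', so it never shows that this coefficient equals the $a_n$ defined by $A(x)$. You derive $a_n=[t^n](t+t^2)(1+t+t^2)^{n-1}$ from $A=1/(1-f)$ and $f=x(1+f+f^2)$, using the form $[x^n]H(f)=[u^n]H(u)\phi^{n-1}(\phi-u\phi')$ of Lagrange inversion. The cancellation $\phi-u\phi'=(1-u)(1+u)$ against $H(u)=1/(1-u)$ is correct, and so is the palindromy step $N_{n-1}(n)=N_{n-1}(n-2)$. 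This makes the proposition self-contained relative to the earlier sections.

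\textbf{The binomial sum.} The paper gets it by ``combining adjacent coefficients'': expand $(1+t(1+t))^{n-1}$, apply Pascal's rule, reindex. You count directly. The digit sum is $n=\#0+\#1+\#2$ and also $\#1+2\cdot\#2$, so $\#0=\#2=k$. Placing the $k$ zeros in positions $2,\dots,n$ and then the $k$ twos among the remaining $n-k$ positions gives $\binom{n-1}{k}\binom{n-k}{k}$. This explains what the summation index means; the paper's algebra is shorter but gives no such meaning.

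\textbf{Cleanup.}
\begin{itemize}
\item Drop the first Lagrange computation via $H'$ and Theorem~\ref{thm:explicit-c}. It is a dead end, and it gives the $(n,1)$ entry of $\widehat D$, not a row sum.
\item Your abandoned algebraic route closes immediately. Substituting $k=n-1-j$ turns $\sum_k\binom{n-1}{k}\binom{k+1}{n-1-k}$ termwise into $\sum_j\binom{n-1}{j}\binom{n-j}{j}$, with no Vandermonde step needed.
\item In the bijective count, state the logic forwards: first the digit sum forces $\#0=\#2$, then you count placements. Do not choose the positions first and infer the equality afterwards.
\end{itemize}
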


\begin{proof}
The digit sum condition $\sum d_i = n$ for $d_1 \in \{1,2\}$ and $d_i \in \{0,1,2\}$ ($i \geq 2$) is encoded by the polynomial $(t+t^2)(1+t+t^2)^{n-1}$. Splitting the leading factor gives $a_n = [t^{n-1}](1+t+t^2)^{n-1} + [t^{n-2}](1+t+t^2)^{n-1}$. Combining these adjacent coefficients yields the stated binomial sum.
\end{proof}


\begin{proposition}\label{prop:ordered-trees}
For $n \geq 0$, $a_n$ counts plane rooted trees with $n$ edges in which every non-root vertex has outdegree at most $2$.
\end{proposition}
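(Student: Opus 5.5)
We prove the statement by a generating-function argument based on a first-subtree decomposition of the trees. Let $t_n$ be the number of plane rooted trees with $n$ edges in which the root has unrestricted outdegree and every other vertex has outdegree at most $2$, and put $T(x)=\sum_{n\ge0}t_nx^n$. We will show $T(x)=A(x)$, using the characterization $A(x)=1/(1-xM(x))$ from Proposition~\ref{prop:motzkin-composition}.

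\textbf{Step 1: the subtrees below the root.} Let $\mathcal U$ be the class of plane rooted trees in which every vertex, the root included, has outdegree at most $2$. Let $U(x)$ be its generating function by number of edges, equivalently by number of non-root vertices. Decomposing at the root gives three cases: no child, one child, or two ordered children. Hence
\[
U(x)=1+xU(x)+x^2U(x)^2 .
\]
This is exactly the Motzkin functional equation $M=1+xM+x^2M^2$ recorded earlier, and $U(0)=M(0)=1$. The equation determines the power-series solution uniquely, coefficient by coefficient, so $U(x)=M(x)$. Equivalently, $U_n=M_n$ is the classical count of unary--binary plane trees with $n$ edges.

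\textbf{Step 2: the root.} A tree counted by $t_n$ is an ordered sequence of $k\ge0$ subtrees hanging from the root. Each subtree consists of one edge from the root to a child $v$, together with a tree of class $\mathcal U$ rooted at $v$. The condition on $v$ is that its outdegree is at most $2$, which is precisely the class $\mathcal U$ condition. Each subtree therefore contributes $xU(x)=xM(x)$, and the sequence construction gives
\[
T(x)=\frac{1}{1-xM(x)}=A(x).
\]
Comparing coefficients yields $t_n=a_n$ for all $n\ge0$; the case $n=0$ is the single-vertex tree, and $a_0=1$. Alternatively, one may read this off as the composition formula of Proposition~\ref{prop:weighted-compositions}: a part of size $k$ is a root-subtree with $k$ edges, and there are $M_{k-1}$ of these.

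\textbf{Main obstacle.} The only delicate point is the correct reading of the hypothesis: the root is unrestricted, and the bound applies to children counted as outdegree. Under this reading the root decomposition is a free sequence, while the non-root decomposition is Motzkin, which gives the result. If the root were also restricted, the count would be $M_n$ instead. As a sanity check, $n=2$ gives two trees (a path and a cherry), matching $a_2=2$. For $n=3$, the trees with an unrestricted root number $5=a_3$: the root of degree $3$ is allowed, and the vertices with three children below the root are excluded.
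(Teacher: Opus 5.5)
Your proof is correct and complete. The paper gives no proof of this proposition: it appears in Section~\ref{sec:further-connections} only as one of the interpretations recorded in the OEIS entry. Your first-level decomposition therefore supplies the missing justification rather than an alternative route.

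Both ingredients you rely on are established in the paper. The first is the Motzkin equation $M=1+xM+x^2M^2$. Together with the coefficient-by-coefficient uniqueness you point out, it identifies the trees in which every vertex has outdegree at most $2$ with $M(x)$. The second is the identity $A(x)=1/(1-xM(x))$, which the paper checks by rationalizing $1/(1-f)$ with $f=xM$.

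Your route also gives Proposition~\ref{prop:weighted-compositions} a combinatorial meaning. The composition is the sequence of edge counts of the root subtrees, and each part $k$ contributes $M_{k-1}$ choices.

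One small slip is in your $n=3$ sanity check. No non-root vertex can have three children there, since that needs at least four edges. So all $C_3=5$ plane trees qualify and nothing is excluded. The first exclusion occurs at $n=4$, where $14-1=13=a_4$.
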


\begin{proposition}\label{prop:symmetric-dyck}
For $n \geq 1$, $a_n$ counts symmetric Dyck paths of semilength $2n-1$ (or $2n$) having no peaks at an even level.
\end{proposition}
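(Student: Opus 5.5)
We need to prove that $a_n$ counts symmetric Dyck paths of semilength $2n-1$ (or $2n$) having no peaks at an even level. The plan is to reduce symmetric Dyck paths to their left halves and then match generating functions.

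First, a symmetric Dyck path of semilength $N$ is determined by its first half. This half is a path of length $N$ with steps $\pm1$ that never goes below the axis and may end at any height; in other words, it is a Dyck prefix. Peaks strictly inside the half correspond to peaks in the mirrored half at the same levels. The only peak that needs separate attention is at the midpoint: it occurs exactly when the half ends with an up-step, and then its level is the final height. So the condition becomes: the prefix has no $ud$ factor at an even level, and if it ends with $u$, its final height is odd.

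The key step is to compress such prefixes into Motzkin prefixes. Pair the steps of a Dyck prefix of even length $2m$ two at a time, starting from an even level. The pairs are $uu$, $dd$, $ud$ and $du$. Forbidding a peak at an even level removes every $ud$ pair whose middle vertex sits at odd height. I would rather use the standard even/odd level bijection. Read the path at even times, so that the heights $0,2,4,\dots$ become $0,1,2,\dots$. The pairs $uu$, $dd$ and $du$ then become $u$, $d$ and $h$. A pair $ud$ is a peak at odd level $2k+1$, which is allowed; this gives a second flat step, and it is exactly the trouble.

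A cleaner approach is to let generating functions decide. Let $F(x,y)$ count Dyck prefixes with no even-level peak by length and final height. Decompose by first returns: a Dyck path with no peaks at even levels is either empty or has the form $u\,P\,d\,Q$. Here $P$ is nonempty with no peaks at odd levels (relative to the start), and $Q$ is of the original type. Write $E$ for the generating function of Dyck paths with no even-level peaks and $O$ for those with no odd-level peaks. Then
\[E=1+x(O-1)E,\qquad O=1+xEO.\]
Solving gives the Motzkin-type quadratic. Prefixes factor as a sequence of such paths separated by unmatched up-steps, with alternating parity constraints, together with the midpoint condition. This yields a rational expression in $E$ and $O$. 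The target is to show that its coefficients at lengths $2n-1$ and $2n$ both equal $[x^n]A(x)$ with $A=1/(1-xM(x))$, using Proposition~\ref{prop:motzkin-composition} and $M=1+xM+x^2M^2$.

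The main obstacle is the bookkeeping of the alternating parities in the prefix factorization and of the midpoint peak condition. The parity of the semilength ($2n-1$ versus $2n$) changes which final heights are admissible, so the two cases must be treated separately. I would first compute small cases by hand, for example $n=1,2,3$, checking that they give $1,2,5$. I would then use the substitution $x\mapsto x^2$ together with the closed form $A(x)=\tfrac12+\tfrac12\sqrt{(1+x)/(1-3x)}$ to verify the final identity algebraically.
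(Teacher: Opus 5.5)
The paper gives no proof of this proposition (it is one of the OEIS facts listed at the end), so your argument must stand on its own. As written, it is a plan whose decisive step is missing, and one of its equations is wrong.

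Your reduction to half-paths is correct: you need a Dyck prefix with no $ud$ at an even level, whose final height is odd if it ends in $u$. The first-return system, however, is backwards. In $u\,P\,d\,Q$ with $P$ empty, $ud$ is a peak at level $1$, which is \emph{odd}. So this case is allowed in $E$ and forbidden in $O$. The correct equations are $E=1+xOE$ and $O=1+x(E-1)O$, i.e.\ yours with $E$ and $O$ interchanged. They give $E=1+f$, where $f=xM(x)$ solves $f=x(1+f+f^2)$, and $O=1/(1-xf)$. With your labels, every later formula would attach the wrong series to each level parity.

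More seriously, you defer exactly the substance of the proof as ``the main obstacle'': the prefix factorization, the midpoint condition and the coefficient identity. Small-case checks do not replace these. The argument is short once written out.

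Write the half-path as $D_0\,u\,D_1\,u\cdots u\,D_h$, where $h$ is the final height and $D_i$ is a Dyck path at base height $i$. Peaks occur only inside the $D_i$, plus a midpoint peak at level $h$ when $h\ge1$ and $D_h=\emptyset$. A peak at relative level $\ell$ in $D_i$ sits at level $\ell+i$, so $D_i\in E$ for even $i$ and $D_i\in O$ for odd $i$. Also, $h$ has the parity of the length.

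Let $w$ mark the total semilength of the $D_i$, so that $f=f(w)$, $E=E(w)$, $O=O(w)$.
\begin{itemize}
\item Length $2n-1$ forces $h$ odd, so the midpoint peak is harmless. The count is $[w^{n-1}]\,EO/(1-wEO)$.
\item Length $2n$ forces $h$ even, so $D_h$ must be nonempty (automatic if $h=0$). The count is $[w^{n}]\,(E-1)/(1-wEO)$.
\end{itemize}
Since $EO=(1+f)/(1-wf)$,
\[
\frac{EO}{1-wEO}=\frac{1+f}{1-w-2wf}=\frac{1}{1-2w-wf}=\frac{A(w)-1}{w},
\qquad
\frac{E-1}{1-wEO}=\frac{f(1-wf)}{1-w-2wf}=\frac{f}{1-f}=A(w)-1,
\]
where each middle equality is equivalent to $f=w(1+f+f^2)$. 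Both counts therefore equal $a_n$, and the closed form of $A$ is never needed.
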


\begin{proposition}\label{prop:dyck-avoiding}
For $n \geq 1$, $a_n$ is the number of Dyck words of semilength $n$ avoiding the factor $DUUU$.
\end{proposition}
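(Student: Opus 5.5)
We will show that the generating function $B(x)=\sum_{n\ge0}b_nx^n$ of Dyck words avoiding the factor $DUUU$ (counted by semilength) equals $A(x)$. The plan is a first-return decomposition that tracks how the word begins. Then Proposition~\ref{prop:motzkin-composition}, $A(x)=1/(1-xM(x))$, finishes the argument.

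\textbf{First return.} Write a nonempty Dyck word uniquely as $P=U\,P_1\,D\,P_2$, where $P_1,P_2$ are Dyck words. An occurrence of $DUUU$ in $P$ lies inside $P_1$, inside $P_2$, or straddles a junction. No occurrence can straddle the junction $U|P_1$, because of the leading $U$. None can straddle $P_1|D$, since such a factor would end in the step $D$. The only new possibility is therefore the distinguished $D$ followed by the first three steps of $P_2$. Hence $P$ avoids $DUUU$ if and only if $P_1$ avoids it and $P_2$ avoids it and does not begin with $UUU$. Let $C_2(x)$ count avoiding Dyck words not beginning with $UUU$, and $C_1(x)$ count those not beginning with $UU$ (the empty word is included in both). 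This gives
\[
B=1+xBC_2 .
\]

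\textbf{Restricted classes.} Apply the same decomposition $P_2=U\,Q\,D\,R$ to a nonempty word counted by $C_2$. The condition ``$P_2$ does not start with $UUU$'' means that $Q$ does not start with $UU$. The junction analysis is unchanged, so $R$ must avoid $DUUU$ and not start with $UUU$. Hence
\[
C_2=1+xC_1C_2 .
\]
A nonempty word counted by $C_1$ must start with $UD$. It is therefore $U\,\emptyset\,D\,R$ with $R$ counted by $C_2$, so
\[
C_1=1+xC_2 .
\]
Substituting gives $x^2C_2^2+(x-1)C_2+1=0$, that is, $C_2=1+xC_2+x^2C_2^2$ with $C_2(0)=1$. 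This is exactly the Motzkin functional equation, so $C_2=M(x)$. Then $B=1/(1-xM(x))=A(x)$ by Proposition~\ref{prop:motzkin-composition}, and hence $b_n=a_n$.

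\textbf{Main obstacle and checks.} The delicate step is the junction analysis: we must verify that $DUUU$ can only straddle the $D\,|\,P_2$ boundary (respectively the $D\,|\,R$ boundary), and that the prefix conditions on $P_2$ and on $Q$ translate exactly as stated. A small-case check supports the system: $DUUU$ forces at least three further $D$ steps, so every Dyck word of semilength at most $3$ avoids it, and indeed $a_1,a_2,a_3=1,2,5$ are the Catalan numbers. At semilength $4$ the only offender is $UDUUUDDD$, giving $14-1=13=a_4$. I would record these checks as a consistency test of the system before solving it.
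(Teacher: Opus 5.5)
Your argument is correct. Note that the paper does not actually prove this proposition: it appears in Section~\ref{sec:further-connections} among facts recorded in the OEIS entry for A005773, with no argument. Your first-return system is therefore an independent, self-contained proof rather than a variant of one in the text.

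The key point is that the forbidden factor $DUUU$ contains only one $D$, in its first position. So the decomposition $P=U\,P_1\,D\,P_2$ creates only one new possible occurrence, starting at the distinguished return step. Refining by the prefix conditions ``does not start with $UU$'' and ``does not start with $UUU$'' then closes into the finite system $B=1+xBC_2$, $C_2=1+xC_1C_2$, $C_1=1+xC_2$. This system yields $C_2=M$ and $B=1/(1-xM(x))$, which is exactly Proposition~\ref{prop:motzkin-composition}. Compared with the paper's bare citation, your route explains why Motzkin numbers appear: the avoiders that do not begin with $UUU$ are Motzkin-counted, and the full class is obtained from them through the linear relation $B=1+xBC_2$. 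This mirrors the form $A=1/(1-f)$, $f=xM$, used for $\widehat D$.

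One justification needs tightening. A factor that begins inside $P_1$ and covers the distinguished $D$ need not end at that $D$; it may continue into $P_2$. So ``such a factor would end in the step $D$'' is not the right reason. The correct reason is that the distinguished $D$ would then occupy position $2$, $3$ or $4$ of the factor, where $DUUU$ has a $U$. The same remark applies at the $D\,|\,R$ junction in your recursion for $C_2$.

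Your small-case checks $1,2,5,13$ are right. The next value from your system, $b_5=35$, also agrees with a direct count: exactly seven of the $42$ Dyck words of semilength $5$ contain $DUUU$.
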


\begin{proposition}\label{prop:avoiding-permutations}
For $n \geq 0$, $a_n$ is the number of permutations of $[n]$ avoiding both vincular patterns $1\!-\!23\!-\!4$ and $1\!-\!3\!-\!2$.
\end{proposition}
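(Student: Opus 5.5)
Since $a_n$ counts words of length $n$ in the factors of the expansion $A(x)=1/(1-xM(x))$ (Proposition~\ref{prop:motzkin-composition}), the natural plan is a bijection. We show that permutations of $[n]$ avoiding $1\!-\!23\!-\!4$ and $1\!-\!3\!-\!2$ decompose uniquely according to the functional equation $A(x)=1+xM(x)A(x)$. Equivalently, we show that their counting sequence $p_n$ satisfies $p_0=1$ and $p_n=\sum_{k=0}^{n-1}M_k\,p_{n-1-k}$. Then $p_n=a_n$ by induction, the recurrence in Proposition~\ref{prop:motzkin-composition} fixing the sequence from $a_0=1$.

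The decomposition I would use starts from $1\!-\!3\!-\!2$ avoidance, which is classical avoidance of $132$. A $132$-avoider $\pi$ of $[n]$ with $n$ in position $k+1$ has the form $\pi=\sigma\, n\,\tau$. Here every entry of $\sigma$ exceeds every entry of $\tau$, and both $\sigma$ and $\tau$ are $132$-avoiders. Next I examine how the vincular pattern $1\!-\!23\!-\!4$ interacts with this split. An occurrence needs an adjacent ascent $bc$ with a smaller entry to its left and a larger entry to its right. Any occurrence lying inside $\tau$ stays an occurrence in $\tau$. Moreover, $\tau$ cannot use $n$ or $\sigma$ as its ``4'', since those lie to its left. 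So $\tau$ is just an arbitrary avoider of both patterns, contributing the factor $p_{n-1-k}$.

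On the $\sigma$ side, an occurrence with its ``4'' equal to $n$ requires an adjacent ascent in $\sigma$ preceded by a smaller entry. I claim that $\sigma$ (of length $k$, after standardization) must then be a $132$-avoider in which no adjacent ascent has a smaller element before it, and also must not contain the full vincular pattern. I would then show that such $\sigma$ are counted by the Motzkin number $M_k$. To do this I would prove that they satisfy the Motzkin recurrence $M_k=M_{k-1}+\sum_{j=0}^{k-2}M_jM_{k-2-j}$. The first-return split would be: either the first entry of $\sigma$ is its minimum (forcing the rest to be decreasing-like, giving the $M_{k-1}$ term), or one applies the $132$ block decomposition around the maximum of $\sigma$. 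The ascent restriction forces the maximum to sit immediately after a descent structure, giving the quadratic term.

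The main obstacle is this middle step: pinning down exactly which left blocks $\sigma$ are admissible, and proving that they are Motzkin-counted. The $132$ decomposition naturally produces Catalan recurrences, and the vincular condition must cut these down to Motzkin ones. If the direct bijective recurrence proves messy, I would fall back on a generating-tree argument. I would record, for each avoider, the number of active insertion sites for a new maximum, and check that the resulting succession rule has the $\A$-sequence $1+y+y^2$ and boundary behaviour matching the production matrix $P_{\widehat D}$ of Theorem~\ref{thm:production}. Then the count is $(P_{\widehat D}^{\,n})_{0,0}=a_n$ by Corollary~\ref{cor:production-powers}. Small cases $n\le 4$ (giving $1,1,2,5,13$) would be checked directly to calibrate the rule.
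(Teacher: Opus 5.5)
The paper states this proposition without proof (it is quoted from the OEIS), so your argument has to stand on its own. Your reduction is correct: $\pi=\sigma\,n\,\tau$ avoids both patterns iff three conditions hold. First, every entry of $\sigma$ exceeds every entry of $\tau$. Second, $\tau$ avoids both patterns. Third, $\sigma$ avoids $1\!-\!3\!-\!2$ and $1\!-\!23$. To justify this, note that an occurrence whose $4$ lies in $\tau$ lies wholly in $\tau$, because everything to the left of $\tau$ is larger. An adjacent ascent ending at $n$ would need a $4$ above $n$. And avoiding $1\!-\!23$ already implies avoiding $1\!-\!23\!-\!4$, so your extra condition on $\sigma$ is redundant. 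Hence $p_n=\sum_{k=0}^{n-1}q_k\,p_{n-1-k}$, where $q_k$ counts permutations of $[k]$ avoiding $1\!-\!3\!-\!2$ and $1\!-\!23$. Everything then rests on $q_k=M_k$.

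That is exactly the step you leave open, and the split you sketch for it fails. If the first entry of a $132$-avoider $\sigma$ is its minimum, then $132$-avoidance forces the rest to be increasing. So $\sigma=12\cdots k$, which contains $1\!-\!23$ as soon as $k\ge 3$. This case contributes at most one permutation, not $M_{k-1}$. Your ``descent structure'' description of the other case is too vague to produce the quadratic term.

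The repair is to split on the maximum instead. Write $\sigma=\alpha\,k\,\beta$ with every entry of $\alpha$ exceeding every entry of $\beta$. An occurrence of $1\!-\!23$ in $\sigma$ has only three possible locations:
it lies inside $\alpha$;
it lies inside $\beta$ (its $1$ is smaller than an entry of $\beta$, hence itself in $\beta$);
or it uses the adjacent ascent formed by the last entry of $\alpha$ and $k$.
No ascent can start at $k$. So $\sigma$ is admissible iff $\alpha$ and $\beta$ are admissible and, when $\alpha$ is nonempty, its last entry is $\min\alpha$.

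A final global minimum can play no role in either pattern. So deleting it is a bijection onto admissible permutations of length $m-1$, where $m$ is the length of $\alpha$. This gives $q_k=q_{k-1}+\sum_{m=1}^{k-1}q_{m-1}q_{k-1-m}$, which is the Motzkin recurrence. Here the $q_{k-1}$ term is the case where the first entry is the maximum, not the minimum. With $q_0=1$ you get $q_k=M_k$, which completes your proof, and the generating-tree fallback is not needed.
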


\begin{proposition}\label{prop:directed-animals}
The sequence $(a_n)_{n \geq 0}$ enumerates directed animals of size $n$ in standard position. Established bijections link these structures to grand Dyck paths and specific multiset configurations \cite{BarilBevanKirgizov2020}.
\end{proposition}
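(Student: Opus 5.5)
We prove the statement by exhibiting a bijection with the directed-animal model, using the generating function $A(x)=1/(1-xM(x))$ from Proposition~\ref{prop:motzkin-composition} as the organizing identity. The goal is to show that directed animals of size $n$ in standard position have generating function $A(x)$. Then $[x^n]A(x)=a_n$ by Theorem~\ref{thm:GF}, and the statement follows.

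\textbf{Step 1: reduce to known counts.} By Dhar et al.\ \cite{Dhar1982}, as reformulated by Bousquet-M\'elou \cite{BousquetMelou1998}, directed animals on the square lattice with a single source are counted by $a_n$. The classical route goes through heaps of dimers (pieces) in Viennot's sense. A directed animal of size $n$ corresponds to a pyramid of $n$ dimers. Pyramids decompose as a half-pyramid together with an extra structure, and half-pyramids are counted by Motzkin numbers: a half-pyramid of size $k+1$ is encoded by a Motzkin path of length $k$. We therefore decompose an animal into an ordered sequence of ``strict'' half-pyramid blocks, each block of size $k$ contributing weight $M_{k-1}$. This matches exactly the composition formula of Proposition~\ref{prop:weighted-compositions}, so it yields the generating function $1/(1-xM(x))=A(x)$.

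\textbf{Step 2: the cited bijections.} The ``established bijections'' to grand Dyck paths and multiset configurations are those of \cite{BarilBevanKirgizov2020}. For grand Dyck paths, we would check directly that $a_{n}=\MP_{n-1}$ counts Motzkin prefixes of length $n-1$; by Proposition~\ref{prop:central-binomial} this equals $\sum_k\binom{n-1}{k}\binom{k}{\lfloor k/2\rfloor}$. The factor $\binom{k}{\lfloor k/2\rfloor}$ counts lattice paths of $k$ up/down steps that stay weakly above a fixed level, which is the grand Dyck, or ballot, count. The binomial $\binom{n-1}{k}$ chooses the positions of the horizontal steps. Reading this as an explicit map from prefixes to marked ballot paths supplies the cited combinatorial link.

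\textbf{Main obstacle.} The genuinely nontrivial step is Step~1: showing that the heap/pyramid decomposition produces exactly the factor $xM(x)$ per block, with no overcounting at the boundary between successive blocks. Here we would invoke Viennot's lemma, namely that a pyramid factors uniquely by its maximal pieces, rather than reprove it. Since the proposition is stated as a record of known interpretations, the honest proof is to verify that the generating function of the model equals $A(x)$ via this decomposition and to cite \cite{Dhar1982,BousquetMelou1998,BarilBevanKirgizov2020} for the bijective details.
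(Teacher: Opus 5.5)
Your proposal is essentially sound but takes a different route from the paper, with a few inaccuracies. The paper offers no argument for this proposition. It is recorded as a known fact: the enumeration goes back to Dhar et al.\ \cite{Dhar1982} and Bousquet-M\'elou \cite{BousquetMelou1998}, cited in the introduction, and the links to grand Dyck paths and multisets are attributed wholesale to \cite{BarilBevanKirgizov2020}.

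Your Step~1 is therefore a different and more informative route. Via heaps of dimers, square-lattice directed animals of size $n$ correspond to pyramids of $n$ dimers. Half-pyramids of size $k+1$ are encoded by Motzkin paths of length $k$, so their generating function is $H=xM(x)=f(x)$. The relation $S=H/(1-H)$ for pyramids then gives $1+S=1/(1-xM(x))=A(x)$. What this buys is a structural reason for Propositions~\ref{prop:motzkin-composition} and~\ref{prop:weighted-compositions}: the parts of the composition are exactly the half-pyramid blocks.

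Two corrections are needed in Step~1.
\begin{itemize}
\item Strictness (no piece directly covering a piece in the same column) must be imposed on the pyramids themselves, not only on the blocks. Unrestricted pyramids of dimers correspond to directed animals on the triangular lattice, counted by $\tfrac12\binom{2n}{n}=1,3,10,35,\dots$. Without strictness the argument produces the wrong sequence.
\item The lemma you need is not a factorization of a pyramid by its maximal pieces. It is the classical half-pyramid cut: take the lowest piece $c$ in column $-1$ (if there is none, the pyramid is already a half-pyramid). The pieces $\geq c$ form a strict pyramid based at $-1$. The remaining pieces form an order ideal, hence a strict half-pyramid based at $0$ lying in columns $\geq0$. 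Gluing back preserves strictness, because any piece of the upper part sitting over a lower piece in the same column is separated from it by the neighbouring-column piece of its chain from $c$. This gives $S=H(1+S)$, and unrolling yields your sequence of blocks.
\end{itemize}

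Your Step~2 does not do what you claim. Splitting a Motzkin prefix into the positions of its level steps and the subword of its up/down steps is a correct bijective proof of Proposition~\ref{prop:central-binomial}. However, the $\binom{k}{\lfloor k/2\rfloor}$ paths that never go below their starting level are Dyck prefixes (ballot paths), not grand Dyck paths, which may go below the axis; the two families are equinumerous but not the same. More importantly, this links Motzkin prefixes to marked ballot paths, not directed animals to the grand Dyck paths of \cite{BarilBevanKirgizov2020}, and it says nothing about the multisets. For the second sentence of the statement you are, exactly like the paper, relying on the citation, and you should state that rather than present Step~2 as supplying the cited link.
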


\begin{proposition}\label{prop:multiset-models}
For $n \geq 1$, $a_n$ is the number of $n$-multisets of $[n]$ containing no pair of consecutive integers. Equivalently, $a_n$ is the number of $n$-multisets of $[n]$ in which no integer other than $n$ occurs exactly once. This is closely related to the structures examined in \cite{BarilBevanKirgizov2020}.
\end{proposition}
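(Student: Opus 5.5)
The plan is to pass to generating functions and read off two equivalent descriptions of these multisets. First, reduce to a count by occurrence vectors. An $n$-multiset of $[n]$ is a vector $(m_1,\dots,m_n)$ of nonnegative integers with $\sum m_i=n$. In the first description we forbid $m_i\geq1$ and $m_{i+1}\geq1$ simultaneously. So the support of the multiset is a set $S\subseteq[n]$ with no two consecutive elements. Given $S$ with $|S|=k$, the multiplicities are positive integers on $S$ summing to $n$, giving $\binom{n-1}{k-1}$ choices. The number of $k$-subsets of $[n]$ with no two consecutive elements is $\binom{n-k+1}{k}$. Hence the first count is
\[
N_1(n)=\sum_{k\geq1}\binom{n-1}{k-1}\binom{n-k+1}{k}.
\]
The main step is to show $N_1(n)=a_n$. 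I would compare $N_1(n)$ with the binomial sum of Proposition~\ref{prop:base3-trinomial},
\[
a_n=\sum_{k}\binom{n-1}{k}\binom{n-k}{k}.
\]

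The two sums are not term-by-term equal, so I would instead realize $N_1(n)$ as a trinomial coefficient. A gap-free encoding works as follows. Record the multiset as the word $x_1\cdots x_n$ with $x_i=m_i$. The condition $m_i m_{i+1}=0$ together with $\sum m_i=n$ suggests a transfer-matrix count, with $[t^n]$ taken of a product of factors. The generating function over words is $\sum_n [t^n]\,\mathbf{1}^T(\text{transfer})^n\mathbf{1}$. Each position contributes either $1$ (value $0$) or $t/(1-t)$ (a positive value), and a positive entry must be followed by a $0$. This gives a bivariate rational function, and $N_1(n)$ is its diagonal. I would compute the diagonal by the standard residue or kernel method. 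The expected result is $\frac12\bigl(1+\sqrt{(1+x)/(1-3x)}\bigr)$, which matches Theorem~\ref{thm:GF}. Extracting this diagonal, that is, getting the square root with the correct branch, is the step I expect to be the main obstacle. A fallback is to verify directly that $N_1(n)$ satisfies the recurrence $na_n=2na_{n-1}+3(n-2)a_{n-2}$ of Lemma~\ref{Som}. One would do this by Zeilberger's algorithm applied to the summand $\binom{n-1}{k-1}\binom{n-k+1}{k}$, and then check the initial values $1,2$ for $n=1,2$.

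For the second description, I would build a bijection between the two families. Take a multiset in which no value other than $n$ has multiplicity exactly $1$. Then every value $i<n$ has $m_i\in\{0\}\cup\{2,3,\dots\}$. Define $m'_i=m_i-1$ for values with $m_i\geq2$, and move the removed units to create the zeros that separate the support. Concretely, I would map the occurrence vector $(m_1,\dots,m_n)$ to a vector in which each block of length $m_i\geq2$ splits as $(m_i-1,0)$-type shifts. I would then check that this is a bijection onto vectors with no two adjacent positive entries and total $n$, with the special role of $n$ absorbing the boundary. If the explicit bijection proves fiddly, I would instead count the second family directly. Choose $k$ values below $n$ with multiplicities at least $2$, and let $n$ have any multiplicity. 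This gives the sum
\[
\sum_k\binom{n-1}{k}\binom{n-k}{k},
\]
obtained by distributing the excess. That is exactly the binomial sum in Proposition~\ref{prop:base3-trinomial}, which settles the second description immediately. The first description is then obtained from it by the recurrence argument above.
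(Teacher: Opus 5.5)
The paper gives no argument for this proposition (it is only recorded from the OEIS entry), so your proposal has to stand on its own. Your treatment of the second description is correct. Choose the $k$ values below $n$ that occur and give each multiplicity at least $2$. Then distribute the remaining $n-2k$ units weakly among those $k$ values and the value $n$. This gives $\binom{n-1}{k}\binom{n-k}{k}$, exactly the sum in Proposition~\ref{prop:base3-trinomial}. The bijection sketch before it does not define a map: replacing a block $m_i\ge2$ by $(m_i-1,0)$ lengthens the occurrence vector, and nothing says how to compensate. Simply drop it. Your count $N_1(n)=\sum_{k\ge1}\binom{n-1}{k-1}\binom{n-k+1}{k}$ for the first family is also correct.

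\textbf{The gap is that the central identity $N_1(n)=a_n$ is never proved.} The diagonal is only announced with an ``expected result''. The Zeilberger fallback is not carried out: no recurrence or certificate is exhibited, and the algorithm need not return the specific recurrence of Lemma~\ref{Som}. As written, the first description therefore rests on an unexecuted computation.

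\textbf{Closing it via your diagonal.} Put $s=t/(1-t)$. Words of length $L$ with no two adjacent positive letters have generating function
$W(y,t)=\frac{1+sy}{1-y-sy^2}=\frac{(1-t)+ty}{(1-t)(1-y)-ty^2}$, and $\sum_{n\ge0}N_1(n)x^n=[z^0]\,W(z,x/z)$ with
\[
W(z,x/z)=\frac{(1+x)z-x}{(1+x)z(1-z)-x}.
\]
The poles of $W(z,x/z)/z$ that tend to $0$ with $x$ are:
\begin{itemize}
\item $z=0$, with residue $1$;
\item $z_-=\frac12\bigl(1-\sqrt{(1-3x)/(1+x)}\bigr)$, the small root of the denominator, with residue $z_-\sqrt{(1+x)/(1-3x)}=\frac12\bigl(\sqrt{(1+x)/(1-3x)}-1\bigr)$.
\end{itemize}
Their sum is $A(x)$. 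So the branch issue you feared is settled by taking the root that vanishes at $x=0$.

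\textbf{A shorter route.} You can prove $N_1(n)=N_2(n)$ directly. Write the multiset as $1\le x_1\le\cdots\le x_n\le n$. The gaps $x_1-1,\,x_2-x_1,\ldots,x_n-x_{n-1},\,n-x_n$ form a weak composition of $n-1$ whose $n-1$ interior parts avoid the value $1$. With $\psi(x)=\frac{1}{1-x}-x$, this gives $N_1(n)=[x^{n-1}]\,\psi^{n-1}(1-x)^{-2}$. The occurrence-vector count gives $N_2(n)=[x^n]\,\psi^{n-1}(1-x)^{-1}$. Using $x\psi'-\psi=-(1-2x)/(1-x)^2$,
\[
N_2(n)-N_1(n)=[x^n]\frac{(1-2x)\psi^{n-1}}{(1-x)^2}=-\frac{1}{n}\,[x^{-1}]\,\frac{d}{dx}\Bigl(\frac{\psi}{x}\Bigr)^{n}=0.
\]
This also explains why the statement calls the two descriptions equivalent.

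\textbf{Making the second description self-contained.} The paper's proof of Proposition~\ref{prop:base3-trinomial} is itself only a sketch. The substitution $x=t/(1+t)$ turns $N_2(n)=[x^n](1-x+x^2)^{n-1}(1-x)^{-n}$ into $[t^n](1+t)(1+t+t^2)^{n-1}$. That equals $a_n$ by Lagrange inversion applied to $A=1/(1-f)$ with $f=x(1+f+f^2)$.
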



\begin{proposition}\label{prop:A-asymptotic}
As $n \to \infty$, we have the asymptotic approximation $a_n \sim \frac{3^{n-1/2}}{\sqrt{\pi n}}$. In particular, $\lim_{n \to \infty} \frac{a_{n+1}}{a_n} = 3$.
\end{proposition}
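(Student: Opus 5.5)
The plan is to run a singularity analysis on the closed form of Theorem~\ref{thm:GF}, namely
\[
A(x)=\tfrac12+\tfrac12(1+x)^{1/2}(1-3x)^{-1/2}.
\]
The function $A$ is analytic in the disk $|x|<1/3$. Its only singularities on or near the circle $|x|=1/3$ come from the square roots. The branch point at $x=-1$ lies farther out, so the dominant singularity is $x=1/3$. This function is $\Delta$-analytic at $x=1/3$: it extends analytically to a slit disk of radius larger than $1/3$, for instance with the cut along $[1/3,\infty)$ and radius less than $1$. Hence the Flajolet--Odlyzko transfer theorems apply.

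The first step is to expand near $x=1/3$. We have $(1+x)^{1/2}=(4/3)^{1/2}\bigl(1+O(1-3x)\bigr)=\tfrac{2}{\sqrt3}+O(1-3x)$. Therefore
\[
A(x)=\tfrac12+\tfrac{1}{\sqrt3}(1-3x)^{-1/2}+O\bigl((1-3x)^{1/2}\bigr).
\]
The constant $\tfrac12$ contributes nothing to $[x^n]$ for $n\ge1$. The transfer theorem gives
\[
[x^n](1-3x)^{-1/2}=\binom{2n}{n}\frac{3^n}{4^n}\sim \frac{3^n}{\sqrt{\pi n}},
\]
and the error term contributes $O(3^n n^{-3/2})$. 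Thus $a_n\sim \frac{3^n}{\sqrt3\sqrt{\pi n}}=\frac{3^{n-1/2}}{\sqrt{\pi n}}$, which is the claimed asymptotic.

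The ratio limit then follows immediately:
\[
\frac{a_{n+1}}{a_n}\sim 3\sqrt{\frac{n}{n+1}}\to3.
\]

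As an alternative that avoids complex analysis, one can write
\[
A(x)-\tfrac12=\tfrac12(1+x)^{1/2}\sum_n\binom{2n}{n}\Bigl(\tfrac34\Bigr)^n x^n,
\]
which gives
\[
a_n=\tfrac12\sum_{k}\binom{1/2}{k}\binom{2(n-k)}{n-k}\Bigl(\tfrac34\Bigr)^{n-k}.
\]
A standard convolution lemma for sequences with geometric growth ratio $3$, applied against an absolutely convergent weight sequence $\binom{1/2}{k}$ whose generating function $\sum_k\binom{1/2}{k}3^{-k}=(4/3)^{1/2}$ converges, yields the same limit.

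The main obstacle is justifying the transfer. One must check $\Delta$-analyticity, in particular that there is no other singularity on $|x|=1/3$; this is clear, since $-1$ is the only other branch point. If instead we take the convolution route, the obstacle becomes the dominated-convergence bound on the tail of that convolution.
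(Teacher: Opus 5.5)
The paper gives no proof of this proposition; it is only listed among the ``known identities'' taken from the OEIS entry, so there is no argument of the authors' to compare yours with. Your singularity-analysis proof fills that gap, and it is correct.

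The supporting points all check out:
\begin{itemize}
\item $A$ is analytic in the slit disk $\{|x|<1\}\setminus[1/3,1)$, which contains a $\Delta$-domain at $x=1/3$.
\item The remainder $A(x)-\tfrac12-\tfrac{1}{\sqrt3}(1-3x)^{-1/2}$ equals $\tfrac12(1-3x)^{1/2}h(x)$, where $h(x)=\bigl((1+x)^{1/2}-(4/3)^{1/2}\bigr)/(1-3x)$ is analytic in $|x|<1$. So the Flajolet--Odlyzko $O$-transfer legitimately gives the $O(3^n n^{-3/2})$ error.
\item The main coefficient $[x^n](1-3x)^{-1/2}=\binom{2n}{n}(3/4)^n$ is exact, so only Stirling's formula is needed there, not a transfer theorem.
\item The constant $\tfrac12\cdot(4/3)^{1/2}=1/\sqrt3$ and the ratio limit are right.
\end{itemize}
Numerically, $\tfrac{1}{\sqrt3}\binom{18}{9}(3/4)^9\approx 2107.7$ against $a_9=2123$, and the gap is accounted for by the $(1-3x)^{1/2}$ term.

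In your elementary alternative, the hypothesis you quote for the convolution lemma is slightly too weak. With $b_m=\binom{2m}{m}(3/4)^m$ one has $b_{n-k}/b_n\asymp 3^{-k}\sqrt{(n+1)/(n-k+1)}$. This is not uniformly $O(3^{-k})$: it picks up a factor of order $\sqrt n$ when $k$ is near $n$. Mere absolute convergence of $\sum_k\binom{1/2}{k}3^{-k}$ therefore does not by itself give a dominating sequence for the tail.

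What makes the tail vanish is that $(1+x)^{1/2}$ has radius of convergence $1>1/3$; this is the hypothesis of Bender's lemma. Then $\binom{1/2}{k}3^{-k}$ decays geometrically and absorbs that factor. With this hypothesis stated, the convolution route also gives $a_n\sim\tfrac12(4/3)^{1/2}\binom{2n}{n}(3/4)^n$, without complex analysis.
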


\end{document}